\newtheorem{thm}{Theorem}[section]
\newtheorem{prop}[thm]{Proposition}
\newtheorem{lem}[thm]{Lemma}
\newtheorem{cor}[thm]{Corollary}
\theoremstyle{definition}
\theoremstyle{remark}
\newtheorem{rem}[thm]{Remark}
\newcommand{\ie}[0]{i.e.\ }
\newcommand{\abs}[1]{\left\lvert#1\right\rvert}
\newcommand{\ceil}[1]{\left\lceil #1 \right\rceil}
\newcommand{\zz}{{\mathbb Z}}
\newcommand{\nz}{{\mathbb N}}
\newcommand{\rz}{{\mathbb R}}
\newcommand{\rd}{{\rz^d}}
\newcommand{\zt}{{\zz^2}}
\newcommand{\zd}{{\zz^d}}
\newcommand{\ig}{\ensuremath{\mathfrak{i}} }
\newcommand{\tg}{\ensuremath{\mathfrak{t}} }
\newcommand{\ag}{\ensuremath{{\mathcal A}} }
\newcommand{\fg}{\ensuremath{{\mathcal F}} }
\newcommand{\sg}{\ensuremath{{\mathcal S}} }
\newcommand{\heis}{{{\mathsf H}_3(\zz)}}
\newcommand{\xf}{\ensuremath{{\mathsf x}} }
\newcommand{\yf}{\ensuremath{{\mathsf y}} }
\newcommand{\zf}{\ensuremath{{\mathsf z}} }
\newcommand{\mybf}[1]{%
    \pdfliteral direct {2 Tr 0.73 w} %the second factor is the boldness 
     #1%
    \pdfliteral direct {0 Tr 0 w}%
}
\newcommand{\bfz}{\mybf{0}} %use factor 0.85 in \mybf
\newcommand{\bfo}{\mybf{1}}
\DeclareRobustCommand\tilb{\ensuremath{{
\tikz[baseline=0.6mm]{
\draw[line width=0.5pt, color=gray] (0,0) -- (0.34,0) -- (0.34,0.34) -- (0,0.34) -- cycle;}}} }
\DeclareRobustCommand\tilf{\ensuremath{{
\tikz[baseline=0.6mm]{
\draw[line width=0.8pt] (0.17,0) -- (0.17,0.34);
\draw[line width=0.5pt, color=gray] (0,0) -- (0.34,0) -- (0.34,0.34) -- (0,0.34) -- cycle;}}} }
\DeclareRobustCommand\tild{\ensuremath{{
\tikz[baseline=0.6mm]{
\draw[line width=0.8pt] (0,0.34) -- (0.34,0);
\draw[line width=0.5pt, color=gray] (0,0) -- (0.34,0) -- (0.34,0.34) -- (0,0.34) -- cycle;}}} }
\DeclareRobustCommand\tilc{\ensuremath{{
\tikz[baseline=0.6mm]{
\draw[line width=0.8pt] (0.17,0) -- (0.17,0.34);
\draw[line width=0.8pt] (0,0.34) -- (0.34,0);
\draw[line width=0.5pt, color=gray] (0,0) -- (0.34,0) -- (0.34,0.34) -- (0,0.34) -- cycle;}}} }
\DeclareRobustCommand\tila{\ensuremath{{
\tikz[baseline=0.6mm]{
\draw[line width=0.8pt] (0.17,0) -- (0.17,0.34);
\draw[line width=0.8pt] (0,0.17) -- (0.34,0.17);
\draw[line width=0.8pt] (0,0.34) -- (0.34,0);
\draw[line width=0.5pt, color=gray] (0,0) -- (0.34,0) -- (0.34,0.34) -- (0,0.34) -- cycle;}}} }
\newcommand{\arcol}[0]{red }
\definecolor{lightgray}{rgb}{.8,.8,.8}
\definecolor{gray}{rgb}{.6,.6,.6}
\definecolor{darkgray}{rgb}{.4,.4,.4}
\definecolor{lightred}{rgb}{1,.7,.7}
\definecolor{cmm}{RGB}{237,49,35}
\definecolor{green}{RGB}{0,150,0}
\newcommand{\const}{\mathop{\operator@font const}\nolimits}
\newcommand{\card}{\mathop{\operator@font card}\nolimits}
\newcommand{\diam}{\mathop{\operator@font diam}\nolimits}
\newcommand{\id}{\mathop{\operator@font Id}\nolimits}
\newcommand{\perio}{\mathop{\operator@font per}\nolimits}
\newcommand{\per}{\mathop{\operator@font Per}\nolimits}
\newcommand{\orb}{\mathop{\operator@font Orb}\nolimits}
\newcommand{\fix}{\mathop{\operator@font Fix}\nolimits}
\newcommand{\erz}{\mathop{\operator@font span}\nolimits}
\newcommand{\supp}{\mathop{\operator@font supp}\nolimits}
\newcommand{\stab}{\mathop{\operator@font Stab}\nolimits}
\newcommand{\lcm}{\mathop{\operator@font lcm}\nolimits}
\newcommand{\htop}{\mathop{\operator@font h_{\textnormal{top}}}\nolimits}
\newcommand{\hmu}{\mathop{\operator@font h}\nolimits}
\newcommand{\cf}{\mathop{\operator@font CF}\nolimits}
\newcommand{\GL}{\mathop{\operator@font GL}\nolimits}
\newcommand{\homeo}{\mathop{\operator@font Homeo}\nolimits}
\newcommand{\agR}{\mathop{{\mathcal A}_{\operator@font Rob}}\nolimits}
\newcommand{\agB}{\mathop{{\mathcal A}_{\operator@font Bin}}\nolimits}
\newcommand{\agD}{\mathop{{\mathcal A}_{\operator@font Seg}}\nolimits}
\newcommand{\agC}{\mathop{{\mathcal A}_{\operator@font Count}}\nolimits}
\newcommand{\agDtil}{\mathop{\widetilde{\mathcal A}_{\operator@font Seg}}\nolimits}
\newcommand{\agCtil}{\mathop{\widetilde{\mathcal A}_{\operator@font Count}}\nolimits}
\newcommand{\omR}{\mathop{{\Omega}_{\operator@font Rob}}\nolimits}
\newcommand{\omT}{\mathop{\widetilde{\Omega}}\nolimits}
\newcommand{\omM}{\mathop{{\Omega}_{\operator@font Min}}\nolimits}
\title[Strongly aperiodic SFT for the discrete Heisenberg group]{A strongly aperiodic shift of finite type on the discrete Heisenberg group using Robinson tilings}
\author{Ay\c se A. \c Sah\.in}
\address{Department of Mathematics and Statistics, Wright State University, Dayton, OH 45435, USA}
\email{ayse.sahin@wright.edu}
\author{Michael Schraudner}
\address{Centro de Modelamiento Matem\'atico, Universidad de Chile, 8370459 Santiago, Chile}
\email{mschraudner@dim.uchile.cl}
\author{Ilie Ugarcovici}
\address{Department of Mathematical Sciences, DePaul University, Chicago, IL 60614, USA}
\email{iugarcov@depaul.edu}
\thanks{The first author was partially supported by an Association for Women in Mathematics Travel Grant. The second author was supported by FONDECYT Project 1140015 and CMM ANID PIA Basal Grant AFB170001. The third author was partially supported by a Simons Foundation Collaboration grant.}
\subjclass[2010]{37B10, 37B50} 
\keywords{Heisenberg group, strongly aperiodic shift of finite type}
\begin{document}

\begin{abstract}
We explicitly construct a strongly aperiodic subshift of finite type for the discrete Heisenberg group. Our example builds on the classical aperiodic tilings of the plane due to Raphael Robinson. Extending those tilings to the Heisenberg group by exploiting the group's structure and posing additional local rules to prune out remaining periodic behavior we maintain a rich projective subdynamics on $\zt$ cosets. In addition the obtained subshift factors onto a strongly aperiodic, minimal sofic shift via a map that is invertible on a dense set of configurations.
\end{abstract}

\maketitle

\section{Introduction}\label{s:Intro}

One of the fundamental differences between $\zd$ symbolic dynamics for $d=1$ and $d>1$ is the existence of strongly aperiodic shifts of finite type (SFTs) in two and more dimensions. This leads to the undecidability of some very basic questions for higher dimensional systems, like the emptiness or the extension problem, and explains the strong influence of computability on the development of the subject. Since the 1970s there has been an increasing interest in extending the theory of symbolic dynamics to more general group actions. In this broader context it is natural to ask which finitely generated groups admit strongly aperiodic shifts of finite type, and what topological dynamical properties such examples might exhibit.

Here we consider symbolic actions of the discrete Heisenberg group given by the set of $3\times 3$ upper-triangular unipotent integer matrices
\[
\heis=\left\{{\footnotesize\begin{pmatrix} 1 & x & z\\0 & 1 & y\\ 0 & 0 & 1\end{pmatrix}}\,:\,x,y,z\in\zz\right\}
\]
equipped with the usual matrix multiplication operation. This non-abelian nilpotent linear group can also be seen as a semi-direct product $\zt\rtimes_A\zz$, where $A=\left(\begin{smallmatrix}
1& 0\\ 1 & 1 \end{smallmatrix}\right)$ induces an automorphism of $\zt$ leading to the group operation given by
\[
{\bigl(x,(y,z)\bigr)\cdot\bigl(a,(b,c)\bigr)=\bigl(x+a,\bigl((y,z)+A^x(b,c)\bigr)\bigr)=\bigl(x+a,(y+b,z+c+xb)\bigr)\ .}
\]
The authors announced in 2014 the first construction of a strongly aperiodic subshift of finite type on $\heis$ \cite{SSU}. That example was never published but motivated a number of related results in subsequent years which have led to the current state of the art regarding strongly aperiodic shifts of finite type. It is presented in Section~\ref{s:construction} and is the first step towards establishing the following theorem, the main result in this paper.

\begin{thm}\label{t:main}
The discrete Heisenberg group $\heis$ admits a strongly aperiodic shift of finite type which has a strongly aperiodic and minimal subshift factor via a map that is  at most two-to-one, and one-to-one on a dense set of configurations.
\end{thm}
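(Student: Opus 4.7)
The plan is to take the strongly aperiodic SFT $X\subset\ag^\heis$ produced in Section~\ref{s:construction} as the SFT of the statement, and to obtain the minimal sofic factor by a sliding-block recoding that erases the auxiliary decoration introduced there to break the periodic behaviour which survives the naive coset-by-coset Robinson construction. Concretely, I would define an equivalence on $\ag$ that identifies tiles differing only in these extra markers, let $\bg$ be the quotient alphabet, and extend this site-wise (plus a bounded local map if needed) to a factor map $\pi:X\to\bg^\heis$. Put $Y:=\pi(X)$; soficity is then immediate. What remains is to verify minimality and strong aperiodicity of $Y$, and to control the fibers of $\pi$.

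For minimality I would invoke the self-similar supertile hierarchy that the extended Robinson rules force on every $x\in X$: supertiles of every order occur syndetically, and a supertile of sufficiently large order contains, up to translation, every admissible pattern of any prescribed finite size. Projecting through $\pi$, every admissible $Y$-pattern then occurs syndetically in every $y\in Y$, which is minimality. Strong aperiodicity of $Y$ would follow by a lifting argument: any nontrivial $g\in\heis$ fixing some $y\in Y$ permutes the finite fiber $\pi^{-1}(y)$, so a suitable power of $g$ fixes a point of $X$, contradicting the strong aperiodicity of $X$ established in Section~\ref{s:construction}.

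The heart of the argument is the fiber analysis. The classical $\zt$ Robinson tiling admits at most one bi-infinite ``fault line'', yielding at most two ways to reconstruct the original decoration from its skeleton. I would show that in the Heisenberg extension the rules linking neighbouring $\zt$-cosets through the automorphism $A$ propagate any such fault consistently across all cosets, so that a single binary choice determines the entire lift. This gives fiber size at most two globally, and exactly one on the dense set of configurations whose projection contains no fault line; density follows from the hierarchical structure combined with the transitivity guaranteed by minimality of $Y$.

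The main obstacle is this propagation step: one must verify simultaneously that the new local rules force cross-coset coherence of faults (so fibers cannot grow beyond two) and that they still permit fault-bearing configurations (so the factor map genuinely collapses something, keeping $Y$ properly sofic rather than already an SFT). Establishing both directions requires a careful case analysis of how the shear by $A=\left(\begin{smallmatrix}1&0\\1&1\end{smallmatrix}\right)$ relates consecutive Robinson pictures along the fibration $\heis\to\zz$, and this is where the explicit combinatorial details of the construction in Section~\ref{s:construction} will be indispensable.
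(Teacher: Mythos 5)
Your proposal misses the central difficulty that Section~\ref{s:minimal} of the paper exists to resolve: the SFT $\Omega$ of Section~\ref{s:construction} does \emph{not} admit a minimal factor obtained by merely erasing auxiliary decoration. The obstruction lies not in the $\agD$-markers but in the binary counters themselves. In $\Omega$ the overflows are synchronized only among counters of the \emph{same} width (Corollary~\ref{c:sync}); the relative phases between different widths are unconstrained, and the paper exhibits $\omega,\omega'\in\Omega$ whose orbits stay at positive distance because their width-$2$ and width-$4$ counters overflow with different offsets. These offsets are recorded in the $\agB$-digits. You cannot erase those digits (they are what kills the $\xf$-component of a potential period in Proposition~\ref{p:aperiodicity}, so without them the image is no longer strongly aperiodic), and if you keep them the image is not minimal. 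The paper's remedy is to \emph{change the SFT}: it adds one overflow-coordination tile and the rules \ref{frule1}--\ref{frule2}, producing a new SFT $\omT$ in which an overflow of a wide counter forces simultaneous total overflows of all narrower ones (Lemma~\ref{l:overcoord}). A plan that keeps $X=\Omega$ and only quotients the alphabet cannot reach a minimal image.

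Two further steps are also off target. Your minimality argument (``every admissible pattern of prescribed size occurs inside a sufficiently large supertile'') is valid only for the Robinson/cross layers (Remark~\ref{r:minimal}); it fails for the counter layers, since the value of a width-$2^i$ counter at a site depends on the $\xf$-coordinate modulo $2^{2^i}$, which dwarfs any supertile diameter. The paper instead locates a complete counter array and climbs through on the order of $2^{2^i}$ successive $\langle\yf,\zf\rangle$-layers until the deterministic evolution reproduces the desired value, with a separate delicate case for the exceptional $\langle\xf,\zf\rangle$-coset carrying a non-deterministic infinite-width counter (Proposition~\ref{p:minsofic}). Likewise, your fiber analysis attributes the two-to-one behaviour to Robinson ``fault lines,'' but in the paper the at-most-two preimages arise from a different source entirely: the binary choice of decorating an infinite $\yf$-segment of most significant bits with $\tilc$ versus $\tila$, equivalently the single non-deterministic increment available to an infinite-width counter without a least significant bit (Lemma~\ref{l:infcount}, Propositions~\ref{p:blockcode} and~\ref{p:minsofic}); the dense one-to-one locus is the set of configurations with no exceptional coset. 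Your lifting argument for strong aperiodicity of the image (a power of $g$ fixes a point of a finite fiber, and $\heis$ is torsion-free) is a clean alternative to the paper's re-run of Proposition~\ref{p:aperiodicity}, but it presupposes the finite-fiber bound you have not established.
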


We provide a summary of results that followed our announcement later in the paper but mention here that Barbieri and Sablik have shown the existence of strongly aperiodic shifts of finite type on semi-direct product groups of the form $\zt\rtimes H$, with $H$ a group with decidable word problem \cite{BS16}. Their proof involves a non-trivial extension of Hochman's intricate work in \cite{H09} and subsumes the case of the discrete Heisenberg group. Our construction is similar to theirs in that both extend $\zt$ tilings; their starting point are substitution shifts and ours are the Robinson tilings. Our results differ from theirs in that we exploit the group geometric properties of the discrete Heisenberg group to provide an explicit description of the (small) alphabet and local rules of the SFT, obtain both rich projective subdynamics of the SFT and obtain the minimal factor described in Theorem~\ref{t:main}.

Our arguments rely on a subtle relationship between the combinatorial structure of the $\zt$ subshift given by the Robinson tilings together with the geometry of the Cayley graph of $\heis$. As we show in Section~\ref{s:construction} the Cayley graph of the discrete Heisenberg group can be viewed as a countable union of $\zt$ cosets (drawn horizontally) which are connected by slanted (vertical) edges. We prove that it is possible to fill every other $\zt$ coset by a valid Robinson tiling while forcing additional local rules along the connecting vertical edges in a consistent fashion that eliminates any kind of periodicity. The idea of our approach partly resembles work of Culik and Kari in \cite{CK95} who constructed a strongly aperiodic $\zz^3$-SFT by extending their own strongly aperiodic $\zt$ tilings to the direct product $\zz^3\simeq\zt\times\zz$ in a related way.

\subsection*{Background on aperiodic shifts of finite type}
The existence of a strongly aperiodic shift of finite type was first established in the context of studying tilings of the plane. $\zt$ shifts of finite type can be modeled by tiling spaces obtained from tiling $\rz^2$ by aligned unit squares with colored edges. The SFT's local rules can be translated into the condition to have colors match across tile borders. The existence of a strongly aperiodic set -- \ie a set which tiles $\rz^2$, but only by producing strongly aperiodic configurations -- of such edge-colored squares for $\rz^2$ is thus equivalent to the existence of a strongly aperiodic $\zt$-SFT. Wang's famous conjecture \cite{W61} dating from the 1960s claimed that whenever a collection of edge-colored unit square tiles could tile the entire plane, then those tiles would also allow for a periodic tiling, which in turn would imply decidability of the emptiness problem asking whether a given finite collection of tiles will indeed tile the plane. Berger \cite{B66} was the first to disprove this conjecture with a huge set of over 20,000 distinct tiles whereas subsequent authors have constructed examples using fewer and fewer tiles. In this paper we use extensively the well-known Robinson tilings constructed by Raphael Robinson in 1971 \cite{R71}.  In 1996 Kari and Culik gave a strongly aperiodic example with 13 square tiles \cite{K96,C96}.  More recently Jeandel and Rao \cite{JR17} established the minimal cardinality for strongly aperiodic edge-colored square tile sets to be 11. We note that Berger's construction of a strongly aperiodic set of square tiles led to a proof of the algorithmic undecidability of the emptiness problem for $\mathbb Z^2$. As a consequence there is also no general method to determine whether or not a given two dimensional shift of finite type is non-empty and whether or not a locally admissible pattern on a finite part of $\zt$ extends to an entire configuration. (See for example \cite{RR04} for a more in depth treatment of these ideas and corresponding results.)

Related work on tilings for general groups is an active field and, deferring formal definitions until Section~\ref{s:GSFT}, we provide here an incomplete list of results. Results mainly come in two flavors, considering either continuous or (finitely generated) discrete groups. In the 1990s Block and Weinberger studied weakly aperiodic tiling spaces (namely none of the tilings have a co-compact symmetry)
for certain families of Lie groups \cite{BW92}, whereas Mozes \cite{M97} constructed strongly aperiodic tile sets on symmetric spaces obtained from some particular classes of semi-simple Lie groups. Later Goodman-Strauss \cite{GS05,GS10} extensively studied strongly aperiodic tiling spaces on the hyperbolic plane and Margenstern \cite{M08} proved the undecidability of the emptiness problem in this context. Fusing techniques by Goodman-Strauss with properties of the planar Kari-Culik tilings, in 2013 Aubrun and Kari \cite{AK13} were able to produce a weakly aperiodic tile set (with local matching rules) for the class of solvable Baumslag-Solitar groups, showing at the same time the undecidability of the emptiness problem for SFTs on these two-generator, one-relator groups. More recently Esnay and Moutot showed that these examples are even strongly aperiodic and established more examples for certain Baumslag-Solitar groups \cite{EM}.

There is also a more recent body of work addressing the same questions for shifts of finite type on other types of finitely generated groups using tools from a variety of different areas. As mentioned above, the authors established, several years ago, the existence of a strongly aperiodic SFT of $\heis$ and there have been several advances in the area since that announcement. Most significant and relevant to our work is, of course, the result of Barbieri and Sablik \cite{BS16} described above. In addition, Carol and Penland proved that admitting a strongly aperiodic shift of finite type is an invariant of commensurability of groups \cite{CP15}. Meanwhile Cohen showed that the existence of such subshifts is also a quasi-isometry invariant of finitely presented torsion free groups and that strongly aperiodic shifts of finite type can not exist in groups with two or more ends \cite{C14}. In later work with Goodman-Strauss he established the existence of strongly aperiodic shifts of finite type on hyperbolic surface groups \cite{CG17} and in work with Goodman-Strauss and Rieck \cite{CGR17} showed that the word-hyberbolic groups which admit free SFTs are the ones with at most one end.  We also mention the work of Barbieri \cite{Barb} who established the existence of strongly aperiodic SFT for certain branch groups, including the Grigorchuk group.

\subsection*{Organization of the paper}
The paper is organized as follows. In Section~\ref{s:GSFT} we define subshifts and shifts of finite type on finitely generated groups, briefly discuss relevant properties like aperiodicity and minimality and recall the notion of Cayley graphs. Section~\ref{s:Robinson} describes the $\zt$-SFT given by the Robinson tilings and a few technical properties of this SFT essential to our construction. After recalling some basic facts about the discrete Heisenberg group $\heis$, in Section~\ref{s:construction} we present the principal construction of a specific strongly aperiodic SFT on this group exploiting the geometry of its Cayley graph as well as the rigidity of the Robinson SFT. Section~\ref{s:minimal} slightly modifies our previous construction to obtain a $\heis$ SFT with an almost one-to-one, aperiodic, sofic factor.

\section{Subshifts on finitely generated groups}\label{s:GSFT}

Assuming a basic familiarity with symbolic dynamics we use this section to briefly recall a few key notions and set some notation to be utilized in what follows. For additional background and more details we refer the reader to \cite{CC10} and \cite{DM95}.

Let $G$ be a finitely generated, (countably) infinite group and let $\ag$ be a finite (discrete) set, called the {\em alphabet}, whose elements are referred to as {\em symbols}. The cartesian product $\ag^G$ -- equipped with the prodiscrete topology -- is the compact metric space consisting of all functions $\omega:\,G\rightarrow\ag$, called {\em configurations}. We use the notation $\omega=(\omega_g)_{g\in G}\in\ag^G$ where $\omega_g:=\omega(g)$ denotes the symbol seen at a particular element $g\in G$. Similarly, for any finite subset $F\subsetneq G$, an element $p\in\ag^F$ which can be thought of as $p:\,F\rightarrow\ag$, \ie the restriction of a configuration to a finite support, is called a {\em pattern of shape $F$}. Again the notation $p=(p_g)_{g\in F}\in\ag^F$ with $p_g:=p(g)$ is used, while $\supp(p)=F$ denotes its support. We call $p$ a {\em subpattern} of another pattern $q\in\ag^{F'}$ of (finite) shape $F'\subsetneq G$, respectively of a configuration $\omega\in\ag^G$, denoted by $p\sqsubseteq q$, respectively $p\sqsubseteq \omega$ if there exists an element $g\in G$ such that $F\!\cdot\! g\subseteq F'$ and $p=q|_{F\cdot g}$, respectively $p=\omega|_{F\cdot g}$. The countable set of all patterns is defined as $\ag^*:=\bigcup_{F\subsetneq G\text{ finite}}\ag^F$.

Since $G$ acts on itself by translations we obtain a natural $G$-action $G\stackrel{\sigma}{\curvearrowright}\ag^G$ given by homeomorphisms $\bigl\{\sigma^g:=\sigma(g):\,\ag^G\rightarrow \ag^G\bigr\}_{g\in G}$ determined coordinate-wise as $\bigl(\sigma^g(\omega)\bigr)_h:=\omega_{hg}$ for all $g,h\in G$. This symbolic $G$-action is known as the {\em (right) shift} and the pair $(\ag^G,\sigma)$ as the {\em full $G$-shift} over the alphabet $\ag$.

A subset of $\ag^G$ preserved under the $G$-action $\sigma$ is said to be {\em shift invariant}. A {\em symbolic dynamical system} for the group $G$ and the alphabet $\ag$ is then given by any closed, thus compact, and shift invariant set $\Omega\subseteq\ag^G$ together with the restriction of the shift action of $G$ to $\Omega$. The pair $(\Omega,\sigma|_\Omega)$ is called a {\em $G$-shift space} or simply a {\em $G$-subshift}. For notational convenience we refer to a shift space merely as $\Omega$, or when we want to emphasize the acting group, as $(\Omega,G)$.

It is easy to establish an equivalent -- but more combinatorial -- definition of $G$-shift spaces. In fact any subshift $(\Omega,G)$ over $\ag$ is characterized by selecting an (at most countable) family of finite patterns $\fg\subseteq\ag^*$ such that $\omega\in\ag^G$ is an element of $(\Omega,G)$ if and only if $\sigma^g(\omega)|_{\supp(p)}\neq p$ for all $g\in G$ and $p\in\fg$. To emphasize the role of the chosen {\em family of forbidden patterns $\fg$} we introduce the notation $\Omega_\fg:=\bigl\{\omega\in\ag^G\,:\,\forall\,p\in\fg:\,p\nsqsubseteq\omega\bigr\}$. Moreover we call $(\Omega,G)$ a {\em $G$-shift of finite type} ($G$-SFT), if it is possible to obtain $\Omega=\Omega_\fg$ for some finite family $\fg$. If, in addition, for all forbidden patterns $p\in\fg$, $\supp(p)$ consists of a pair of group elements connected by a generator then $\Omega$ is called a {\em nearest neighbor} $G$-SFT. The image of a $G$-SFT under a (topological) factor map is called a {\em sofic $G$-shift}.  A topological factor map is called {\it almost one-to-one} if it is one-to-one on a dense set of configurations.  

Let $\Omega$ be a (non-empty) $G$-subshift with $\omega\in\Omega$. The {\em $G$-orbit of $\omega$} is defined as the set of configurations $\{\sigma^g(\omega)\,:\,g\in G\}\subseteq\Omega$, whereas the {\em stabilizer of $\omega$} is given as the subgroup
\[
\stab_G(\omega):=\{g\in G\,:\,\sigma^g(\omega)=\omega\}\leq G
\]
of all group elements fixing $\omega$. We say $\omega$ is {\em weakly periodic} if $\abs{\stab_G(\omega)}=\infty$, whereas $\omega$ is called {\em strongly periodic} if its $G$-orbit is finite or equivalently if its stabilizer is of finite index in $G$, \ie $\abs{G:\stab_G(\omega)}<\infty$. Accordingly the (non-empty) subshift $(\Omega,G)$ is called {\em weakly aperiodic} if none of its configurations are strongly periodic, that is $\abs{G:\stab_G(\omega)}=\infty$ for all $\omega\in\Omega$, while it is called {\em strongly aperiodic} if all its configurations $\omega\in\Omega$ have trivial stabilizer $\stab_G(\omega)=\{1_G\}$, thus excluding even any weakly periodic behavior. A finitely generated group $G$ is said to {\em admit a strongly aperiodic SFT} if there exists a (non-empty) strongly aperiodic $G$-SFT.

A $G$-subshift not containing any non-empty, proper, closed and shift-invariant subset is referred to as {\em minimal}. Equivalently, every single $G$-orbit in a minimal shift $(\Omega,G)$ is dense. In the case of $\abs{\Omega}=\infty$ minimality excludes the existence of strongly periodic (but not necessarily weakly periodic) configurations.

To finish this section we recall a tool from combinatorial group theory. Given a finitely generated group $G$, its {\em (left) Cayley graph} with respect to a finite generating set $\sg\subseteq G\,\setminus\{1_G\}$ is the connected, locally finite, $\sg$-labeled digraph $\Gamma_{G,\sg}=(V,E)$ whose set of vertices is $V=G$ and whose set of (directed) edges $E\subseteq V\times V$ consists of all pairs of the form $(g,sg)$ for $g\in G$ and $s\in \sg$. An edge $e=(g,sg)$ starts at vertex $\ig(e)=g$, terminates at vertex $\tg(e)=sg$ and carries the generator $s$ as its label. Note that there are exactly $\abs{\sg}$ edges starting respectively ending at each given vertex $g\in G$, making $\Gamma_{G,\sg}$ an $\abs{\sg}$-regular digraph. We point out that the prodiscrete topology on $\ag^G$ mentioned above is induced by the usual product-space metric obtained from using the word metric on the $\sg$-Cayley graph (for an arbitrary (finite) set of generators $\sg$), and that two configurations $\omega,\omega'\in\ag^G$ are considered close if they agree on all group elements inside a large ball -- measured by the word metric -- centered at the identity element $1_G$.

\section{Robinson tilings of the plane}\label{s:Robinson}

In this section we provide a brief review of the Robinson tilings introduced in \cite{R71}, while referring the reader to \cite{JK97,R99,RR04} and the references therein for any details omitted in our exposition.

Following \cite[\S2 and \S3]{R71}, Robinson tilings are constructed using an alphabet $\agR$ of 56 square tiles obtained from rotating the 14 decorated tiles shown in Figure~\ref{f:tiles} by multiples of 90 degrees. Following common nomenclature, we call (rotations of) the leftmost tile in both rows of Figure~\ref{f:tiles} a {\em cross}, while we collectively refer to (rotations of) the 12 non-cross tiles as {\em arms}. In a valid tiling copies of those tiles are placed edge to edge filling the plane subject to the following {\em Robinson rules}:
\begin{enumerate}[label=(R\arabic*)]
\item Across every edge, shared by a pair of adjacent tiles, arrows of each type (\arcol and black) have to match head to tail.\label{rule1}
\item The parity check digits (0, 1 or 2) on opposite sides of an edge segment, shared by a pair of adjacent tiles, have to sum to 2.\label{rule2}
\end{enumerate}

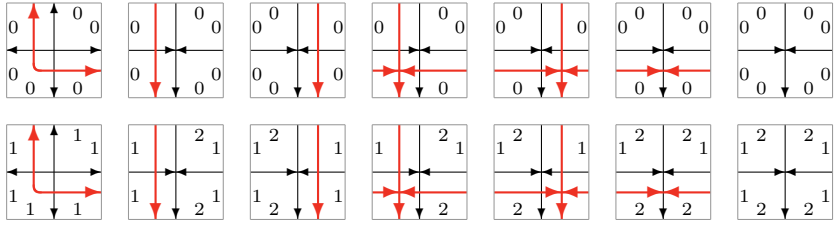
\begin{figure}[hbt]
\setlength{\unitlength}{9mm}
\begin{picture}(12.4,4)
\thinlines
\color{black}
\multiput(1.9,1.2)(1.8,0){6}{\vector(1,0){0.7}}
\multiput(1.9,3)(1.8,0){6}{\vector(1,0){0.7}}
\multiput(3.3,1.2)(1.8,0){6}{\vector(-1,0){0.7}}
\multiput(3.3,3)(1.8,0){6}{\vector(-1,0){0.7}}
\multiput(2.6,1.9)(1.8,0){6}{\vector(0,-1){1.4}}
\multiput(2.6,3.7)(1.8,0){6}{\vector(0,-1){1.4}}
\multiput(0.8,1.2)(0,1.8){2}{\vector(1,0){0.7}}
\multiput(0.8,1.2)(0,1.8){2}{\vector(-1,0){0.7}}
\multiput(0.8,1.2)(0,1.8){2}{\vector(0,1){0.7}}
\multiput(0.8,1.2)(0,1.8){2}{\vector(0,-1){0.7}}
\thicklines
\color{cmm}
\multiput(0.5,1.0)(0,1.8){2}{\vector(0,1){0.9}}
\multiput(0.6,0.9)(0,1.8){2}{\vector(1,0){0.9}}
\multiput(0.5,0.9)(0,1.8){2}{\qbezier(0,0.1)(0,0)(0.1,0)}
\multiput(2.3,1.9)(0,1.8){2}{\vector(0,-1){1.4}}
\multiput(4.7,1.9)(0,1.8){2}{\vector(0,-1){1.4}}
\multiput(5.9,1.9)(0,1.8){2}{\vector(0,-1){1.4}}
\multiput(8.3,1.9)(0,1.8){2}{\vector(0,-1){1.4}}
\multiput(5.5,0.9)(0,1.8){2}{\vector(1,0){0.4}}
\multiput(6.9,0.9)(0,1.8){2}{\vector(-1,0){1}}
\multiput(7.3,0.9)(0,1.8){2}{\vector(1,0){1}}
\multiput(8.7,0.9)(0,1.8){2}{\vector(-1,0){0.4}}
\multiput(9.1,0.9)(0,1.8){2}{\vector(1,0){0.7}}
\multiput(10.5,0.9)(0,1.8){2}{\vector(-1,0){0.7}}
\thicklines
\color{black}
\multiput(0.2,3.35)(1.8,0){7}{\makebox(0,0){\mbox{\scriptsize$0$}}}
\multiput(1.4,3.35)(1.8,0){7}{\makebox(0,0){\mbox{\scriptsize$0$}}}
\multiput(0.2,1.55)(1.8,0){7}{\makebox(0,0){\mbox{\scriptsize$1$}}}
\multiput(1.4,1.55)(1.8,0){7}{\makebox(0,0){\mbox{\scriptsize$1$}}}
\put(1.15,0.65){\makebox(0,0){\mbox{\scriptsize$1$}}}
\put(1.15,1.75){\makebox(0,0){\mbox{\scriptsize$1$}}}
\put(1.15,2.45){\makebox(0,0){\mbox{\scriptsize$0$}}}
\put(1.15,3.55){\makebox(0,0){\mbox{\scriptsize$0$}}}
\multiput(2.95,0.65)(3.6,0){2}{\makebox(0,0){\mbox{\scriptsize$2$}}}
\multiput(4.05,0.65)(3.6,0){2}{\makebox(0,0){\mbox{\scriptsize$2$}}}
\multiput(10.15,0.65)(1.8,0){2}{\makebox(0,0){\mbox{\scriptsize$2$}}}
\multiput(2.95,1.75)(3.6,0){2}{\makebox(0,0){\mbox{\scriptsize$2$}}}
\multiput(4.05,1.75)(3.6,0){2}{\makebox(0,0){\mbox{\scriptsize$2$}}}
\multiput(10.15,1.75)(1.8,0){2}{\makebox(0,0){\mbox{\scriptsize$2$}}}
\multiput(2.95,2.45)(3.6,0){2}{\makebox(0,0){\mbox{\scriptsize$0$}}}
\multiput(4.05,2.45)(3.6,0){2}{\makebox(0,0){\mbox{\scriptsize$0$}}}
\multiput(10.15,2.45)(1.8,0){2}{\makebox(0,0){\mbox{\scriptsize$0$}}}
\multiput(2.95,3.55)(3.6,0){2}{\makebox(0,0){\mbox{\scriptsize$0$}}}
\multiput(4.05,3.55)(3.6,0){2}{\makebox(0,0){\mbox{\scriptsize$0$}}}
\multiput(10.15,3.55)(1.8,0){2}{\makebox(0,0){\mbox{\scriptsize$0$}}}
\multiput(0.2,0.85)(1.8,0){3}{\makebox(0,0){\mbox{\scriptsize$1$}}}
\multiput(0.2,2.65)(1.8,0){3}{\makebox(0,0){\mbox{\scriptsize$0$}}}
\multiput(3.2,0.85)(1.8,0){2}{\makebox(0,0){\mbox{\scriptsize$1$}}}
\multiput(3.2,2.65)(1.8,0){2}{\makebox(0,0){\mbox{\scriptsize$0$}}}
\put(0.45,0.65){\makebox(0,0){\mbox{\scriptsize$1$}}}
\put(0.45,2.45){\makebox(0,0){\mbox{\scriptsize$0$}}}
\multiput(9.45,2.45)(1.8,0){2}{\makebox(0,0){\mbox{\scriptsize$0$}}}
\multiput(9.45,3.55)(1.8,0){2}{\makebox(0,0){\mbox{\scriptsize$0$}}}
\multiput(9.45,0.65)(1.8,0){2}{\makebox(0,0){\mbox{\scriptsize$2$}}}
\multiput(9.45,1.75)(1.8,0){2}{\makebox(0,0){\mbox{\scriptsize$2$}}}
\multiput(11,2.65)(1.2,0){2}{\makebox(0,0){\mbox{\scriptsize$0$}}}
\multiput(11,0.85)(1.2,0){2}{\makebox(0,0){\mbox{\scriptsize$1$}}}
\thinlines
\color{gray}
\multiput(0.1,0.5)(1.8,0){7}{\line(1,0){1.4}}
\multiput(0.1,1.9)(1.8,0){7}{\line(1,0){1.4}}
\multiput(0.1,2.3)(1.8,0){7}{\line(1,0){1.4}}
\multiput(0.1,3.7)(1.8,0){7}{\line(1,0){1.4}}
\multiput(0.1,0.5)(1.8,0){7}{\line(0,1){1.4}}
\multiput(0.1,2.3)(1.8,0){7}{\line(0,1){1.4}}
\multiput(1.5,0.5)(1.8,0){7}{\line(0,1){1.4}}
\multiput(1.5,2.3)(1.8,0){7}{\line(0,1){1.4}}
\end{picture}
\vspace*{-2ex}
\caption{The alphabet used in the Robinson tilings (displayed tiles can still be rotated giving a total of $4\cdot 14=56$ symbols). Observe that in arm tiles the convergence point of \arcol side arrow segments is always located towards the tip of the main arrow.}\label{f:tiles}
\end{figure}

Note that both rules are local, in fact nearest neighbor, and completely symmetric with respect to the $4$ cardinal directions (rows and columns) of $\zt$. The second rule in particular is forcing parity check digits in any row or column to be either constant (digit 1) or periodically alternating (between digits 0 and 2). Moreover, since crosses -- with their outward pointing arrows along all four edges -- are not allowed to be adjacent to each other (due to the first rule), those two types of rows/columns again have to strictly alternate in each tiling. As a consequence the rules guarantee the appearance of rotations of the cross (\ie leftmost tile) in the lower row of Figure~\ref{f:tiles} in an entire coset of $2\zz\times 2\zz$, sometimes called the {\em alternating crosses constraint}. Remaining locations in $\zt$ are filled by rotations of the other 13 tiles with additional crosses (rotated copies of the leftmost symbol on the top row in Figure~\ref{f:tiles}) appearing in a highly regular fashion.

Every element in the $2\zz\times 2\zz$ coset filled by the alternating crosses constraint forms a corner of some $3\times 3$ square, bound together by a contour of \arcol arrows, whose central tile is forced to be another cross (a rotation of the leftmost tile in the upper row of Figure~\ref{f:tiles}), while the remaining $4$ tiles along the square's periphery have to be arms. Four of those $3\times 3$ square patches are then grouped together, again by a \arcol square contour -- connecting the centers of those four $3\times 3$ corner patches -- to form a $7\times 7$ square patch with another (arbitrarily oriented) single cross in its center and arms emanating from there along the $4$ cardinal directions all the way to the square's border. Figure~\ref{f:supertiles} shows (examples of) such square patches.

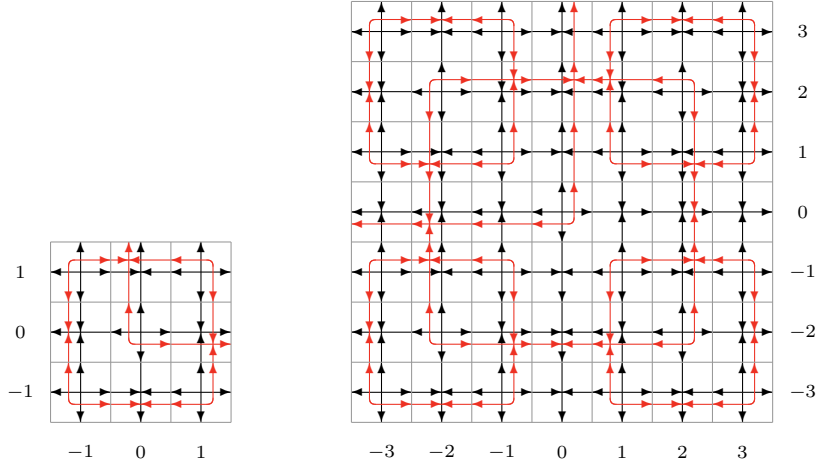
\begin{figure}[hbt]
\setlength{\unitlength}{8mm}
\begin{picture}(14,8.5)
\thinlines
\color{black}
\multiput(1.5,1.5)(2,0){2}{\vector(0,-1){0.5}}
\multiput(1.5,1.5)(0,2){2}{\vector(-1,0){0.5}}
\multiput(1.5,1.5)(2,0){2}{\vector(0,1){0.5}}
\multiput(1.5,1.5)(0,2){2}{\vector(1,0){0.5}}
\multiput(1.5,3.5)(2,0){2}{\vector(0,1){0.5}}
\multiput(3.5,1.5)(0,2){2}{\vector(1,0){0.5}}
\multiput(1.5,3.5)(2,0){2}{\vector(0,-1){0.5}}
\multiput(3.5,1.5)(0,2){2}{\vector(-1,0){0.5}}
\multiput(2,1.5)(0,2){2}{\vector(1,0){0.5}}
\multiput(3,1.5)(0,2){2}{\vector(-1,0){0.5}}
\multiput(1.5,2)(2,0){2}{\vector(0,1){0.5}}
\multiput(1.5,3)(2,0){2}{\vector(0,-1){0.5}}
\put(2,2.5){\vector(-1,0){1}}
\put(3,2.5){\vector(1,0){1}}
\put(2.5,2){\vector(0,-1){1}}
\put(2.5,3){\vector(0,1){1}}
\put(2.5,2.5){\vector(-1,0){0.5}}
\put(2.5,2.5){\vector(1,0){0.5}}
\put(2.5,2.5){\vector(0,-1){0.5}}
\put(2.5,2.5){\vector(0,1){0.5}}
\thinlines
\color{cmm}
\multiput(1.3,1.4)(2.4,0){2}{\vector(0,1){0.6}}
\multiput(1.3,3.6)(2.4,0){2}{\vector(0,-1){0.6}}
\multiput(1.4,1.3)(0,2.4){2}{\vector(1,0){0.6}}
\multiput(3.6,1.3)(0,2.4){2}{\vector(-1,0){0.6}}
\put(2,1.3){\vector(1,0){0.5}}
\put(3,1.3){\vector(-1,0){0.5}}
\put(1.3,2){\vector(0,1){0.5}}
\put(1.3,3){\vector(0,-1){0.5}}
\put(2,3.7){\vector(1,0){0.3}}
\put(3,3.7){\vector(-1,0){0.7}}
\put(3.7,2){\vector(0,1){0.3}}
\put(3.7,3){\vector(0,-1){0.7}}
\put(2.3,2.4){\vector(0,1){0.6}}
\put(2.4,2.3){\vector(1,0){0.6}}
\put(1.3,1.3){\qbezier(0,0.1)(0,0)(0.1,0)}
\put(3.7,3.7){\qbezier(0,-0.1)(0,0)(-0.1,0)}
\put(3.7,1.3){\qbezier(0,0.1)(0,0)(-0.1,0)}
\put(1.3,3.7){\qbezier(0,-0.1)(0,0)(0.1,0)}
\put(2.3,2.3){\qbezier(0,0.1)(0,0)(0.1,0)}
\put(2.3,3){\vector(0,1){1}}
\put(3,2.3){\vector(1,0){1}}

\thinlines
\color{black}
\multiput(6.5,1.5)(0,2){4}{\vector(-1,0){0.5}}
\multiput(6.5,1.5)(0,2){4}{\vector(1,0){0.5}}
\multiput(8.5,1.5)(0,2){4}{\vector(-1,0){0.5}}
\multiput(8.5,1.5)(0,2){4}{\vector(1,0){0.5}}
\multiput(10.5,1.5)(0,2){4}{\vector(-1,0){0.5}}
\multiput(10.5,1.5)(0,2){4}{\vector(1,0){0.5}}
\multiput(12.5,1.5)(0,2){4}{\vector(-1,0){0.5}}
\multiput(12.5,1.5)(0,2){4}{\vector(1,0){0.5}}
\multiput(6.5,1.5)(2,0){4}{\vector(0,-1){0.5}}
\multiput(6.5,1.5)(2,0){4}{\vector(0,1){0.5}}
\multiput(6.5,3.5)(2,0){4}{\vector(0,-1){0.5}}
\multiput(6.5,3.5)(2,0){4}{\vector(0,1){0.5}}
\multiput(6.5,5.5)(2,0){4}{\vector(0,-1){0.5}}
\multiput(6.5,5.5)(2,0){4}{\vector(0,1){0.5}}
\multiput(6.5,7.5)(2,0){4}{\vector(0,-1){0.5}}
\multiput(6.5,7.5)(2,0){4}{\vector(0,1){0.5}}
\multiput(7,2.5)(0,2){3}{\vector(-1,0){1}}
\multiput(12,2.5)(0,2){3}{\vector(1,0){1}}
\multiput(7.5,2)(2,0){3}{\vector(0,-1){1}}
\multiput(7.5,7)(2,0){3}{\vector(0,1){1}}
\multiput(8,2.5)(0,4){2}{\vector(1,0){1}}
\multiput(11,2.5)(0,4){2}{\vector(-1,0){1}}
\multiput(7.5,3)(4,0){2}{\vector(0,1){1}}
\multiput(7.5,6)(4,0){2}{\vector(0,-1){1}}
\multiput(9,4.5)(-1,0){2}{\vector(-1,0){1}}
\multiput(10,4.5)(1,0){2}{\vector(1,0){1}}
\multiput(9.5,4)(0,-1){2}{\vector(0,-1){1}}
\multiput(9.5,5)(0,1){2}{\vector(0,1){1}}
\multiput(6.5,2)(2,0){4}{\vector(0,1){0.5}}
\multiput(6.5,6)(2,0){4}{\vector(0,1){0.5}}
\multiput(6.5,3)(2,0){4}{\vector(0,-1){0.5}}
\multiput(6.5,7)(2,0){4}{\vector(0,-1){0.5}}
\multiput(7,1.5)(0,2){4}{\vector(1,0){0.5}}
\multiput(11,1.5)(0,2){4}{\vector(1,0){0.5}}
\multiput(8,1.5)(0,2){4}{\vector(-1,0){0.5}}
\multiput(12,1.5)(0,2){4}{\vector(-1,0){0.5}}
\multiput(9,1.5)(0,1){3}{\vector(1,0){0.5}}
\multiput(10,1.5)(0,1){3}{\vector(-1,0){0.5}}
\multiput(9,5.5)(0,1){3}{\vector(1,0){0.5}}
\multiput(10,5.5)(0,1){3}{\vector(-1,0){0.5}}
\multiput(6.5,4)(1,0){3}{\vector(0,1){0.5}}
\multiput(6.5,5)(1,0){3}{\vector(0,-1){0.5}}
\multiput(10.5,4)(1,0){3}{\vector(0,1){0.5}}
\multiput(10.5,5)(1,0){3}{\vector(0,-1){0.5}}
\multiput(7.5,2.5)(4,0){2}{\vector(-1,0){0.5}}
\multiput(7.5,2.5)(4,0){2}{\vector(1,0){0.5}}
\multiput(7.5,2.5)(4,0){2}{\vector(0,-1){0.5}}
\multiput(7.5,2.5)(4,0){2}{\vector(0,1){0.5}}
\multiput(7.5,6.5)(4,0){2}{\vector(-1,0){0.5}}
\multiput(7.5,6.5)(4,0){2}{\vector(1,0){0.5}}
\multiput(7.5,6.5)(4,0){2}{\vector(0,-1){0.5}}
\multiput(7.5,6.5)(4,0){2}{\vector(0,1){0.5}}
\put(9.5,4.5){\vector(-1,0){0.5}}
\put(9.5,4.5){\vector(1,0){0.5}}
\put(9.5,4.5){\vector(0,-1){0.5}}
\put(9.5,4.5){\vector(0,1){0.5}}

\thinlines
\color{cmm}
\multiput(0,0)(0,4){2}{
\multiput(0,0)(4,0){2}{
\multiput(6.3,1.4)(2.4,0){2}{\vector(0,1){0.6}}
\multiput(6.3,3.6)(2.4,0){2}{\vector(0,-1){0.6}}
\multiput(6.4,1.3)(0,2.4){2}{\vector(1,0){0.6}}
\multiput(8.6,1.3)(0,2.4){2}{\vector(-1,0){0.6}}
\put(6.3,1.3){\qbezier(0,0.1)(0,0)(0.1,0)}
\put(8.7,3.7){\qbezier(0,-0.1)(0,0)(-0.1,0)}
\put(8.7,1.3){\qbezier(0,0.1)(0,0)(-0.1,0)}
\put(6.3,3.7){\qbezier(0,-0.1)(0,0)(0.1,0)}
}
}
\multiput(7,1.3)(0,6.4){2}{\vector(1,0){0.5}}
\multiput(8,1.3)(0,6.4){2}{\vector(-1,0){0.5}}
\multiput(11,1.3)(0,6.4){2}{\vector(1,0){0.5}}
\multiput(12,1.3)(0,6.4){2}{\vector(-1,0){0.5}}
\multiput(6.3,2)(6.4,0){2}{\vector(0,1){0.5}}
\multiput(6.3,3)(6.4,0){2}{\vector(0,-1){0.5}}
\multiput(6.3,6)(6.4,0){2}{\vector(0,1){0.5}}
\multiput(6.3,7)(6.4,0){2}{\vector(0,-1){0.5}}
\multiput(7,3.7)(0,1.6){2}{\vector(1,0){0.3}}
\multiput(8,3.7)(0,1.6){2}{\vector(-1,0){0.7}}
\multiput(11,3.7)(0,1.6){2}{\vector(1,0){0.7}}
\multiput(12,3.7)(0,1.6){2}{\vector(-1,0){0.3}}
\multiput(8.7,2)(1.6,0){2}{\vector(0,1){0.3}}
\multiput(8.7,3)(1.6,0){2}{\vector(0,-1){0.7}}
\multiput(8.7,6)(1.6,0){2}{\vector(0,1){0.7}}
\multiput(8.7,7)(1.6,0){2}{\vector(0,-1){0.3}}
\multiput(7.4,2.3)(0,4.4){2}{\vector(1,0){0.6}}
\multiput(11.6,2.3)(0,4.4){2}{\vector(-1,0){0.6}}
\multiput(8,2.3)(0,4.4){2}{\vector(1,0){1}}
\multiput(11,2.3)(0,4.4){2}{\vector(-1,0){1}}
\put(9,2.3){\vector(1,0){0.5}}
\put(10,2.3){\vector(-1,0){0.5}}
\put(9,6.7){\vector(1,0){0.7}}
\put(10,6.7){\vector(-1,0){0.3}}
\multiput(7.3,2.4)(4.4,0){2}{\vector(0,1){0.6}}
\multiput(7.3,6.6)(4.4,0){2}{\vector(0,-1){0.6}}
\multiput(7.3,3)(4.4,0){2}{\vector(0,1){1}}
\multiput(7.3,6)(4.4,0){2}{\vector(0,-1){1}}
\put(11.7,4){\vector(0,1){0.5}}
\put(11.7,5){\vector(0,-1){0.5}}
\put(7.3,4){\vector(0,1){0.3}}
\put(7.3,5){\vector(0,-1){0.7}}
\put(7.3,2.3){\qbezier(0,0.1)(0,0)(0.1,0)}
\put(11.7,6.7){\qbezier(0,-0.1)(0,0)(-0.1,0)}
\put(11.7,2.3){\qbezier(0,0.1)(0,0)(-0.1,0)}
\put(7.3,6.7){\qbezier(0,-0.1)(0,0)(0.1,0)}
\put(9.7,4.4){\vector(0,1){0.6}}
\put(9.6,4.3){\vector(-1,0){0.6}}
\put(9.7,4.3){\qbezier(0,0.1)(0,0)(-0.1,0)}
\multiput(9.7,5)(0,1){3}{\vector(0,1){1}}
\multiput(9,4.3)(-1,0){3}{\vector(-1,0){1}}
\thinlines
\color{gray}
\multiput(1,1)(1,0){4}{\line(0,1){3}}
\multiput(1,1)(0,1){4}{\line(1,0){3}}
\multiput(6,1)(1,0){8}{\line(0,1){7}}
\multiput(6,1)(0,1){8}{\line(1,0){7}}
\thicklines
\color{black}
\put(1.5,0.5){\makebox(0,0){\mbox{\scriptsize$-1$}}}
\put(2.5,0.5){\makebox(0,0){\mbox{\scriptsize$0$}}}
\put(3.5,0.5){\makebox(0,0){\mbox{\scriptsize$1$}}}
\put(0.5,1.5){\makebox(0,0){\mbox{\scriptsize$-1$}}}
\put(0.5,2.5){\makebox(0,0){\mbox{\scriptsize$0$}}}
\put(0.5,3.5){\makebox(0,0){\mbox{\scriptsize$1$}}}
\put(6.5,0.5){\makebox(0,0){\mbox{\scriptsize$-3$}}}
\put(7.5,0.5){\makebox(0,0){\mbox{\scriptsize$-2$}}}
\put(8.5,0.5){\makebox(0,0){\mbox{\scriptsize$-1$}}}
\put(9.5,0.5){\makebox(0,0){\mbox{\scriptsize$0$}}}
\put(10.5,0.5){\makebox(0,0){\mbox{\scriptsize$1$}}}
\put(11.5,0.5){\makebox(0,0){\mbox{\scriptsize$2$}}}
\put(12.5,0.5){\makebox(0,0){\mbox{\scriptsize$3$}}}
\put(13.5,1.5){\makebox(0,0){\mbox{\scriptsize$-3$}}}
\put(13.5,2.5){\makebox(0,0){\mbox{\scriptsize$-2$}}}
\put(13.5,3.5){\makebox(0,0){\mbox{\scriptsize$-1$}}}
\put(13.5,4.5){\makebox(0,0){\mbox{\scriptsize$0$}}}
\put(13.5,5.5){\makebox(0,0){\mbox{\scriptsize$1$}}}
\put(13.5,6.5){\makebox(0,0){\mbox{\scriptsize$2$}}}
\put(13.5,7.5){\makebox(0,0){\mbox{\scriptsize$3$}}}
\end{picture}
\vspace*{-2ex}
\caption{Supertiles of level 1 and 2 (parity check digits suppressed). Note that (black) arrows are pushing outward along the entire border of a supertile, thus forcing all neighboring tiles to be arms. Additionally, along a supertile's periphery exactly two \arcol arrows appear in the middle of two adjacent sides reverberating the orientation of its central cross.}\label{f:supertiles}
\end{figure}

By induction on $i\in\nz$ it can be shown that inside every valid $\zt$-Robinson tiling there exist correctly tiled $(2^{i+1}-1)\times(2^{i+1}-1)$ square patches -- usually called {\em level-$i$ supertiles} -- containing cross tiles at precisely the sites in the set
\begin{equation}\label{e:ci}
\begin{split}
C_i&:=\!\bigcup_{j=0}^i\bigl(\{2^j(2k+1)-2^i:0\leq k<2^{i-j}\}\!\times\!\{2^j(2k+1)-2^i:0\leq k<2^{i-j}\}\bigr)\\
&\ \subseteq[-2^i+1,2^i-1]^2.
\end{split}
\end{equation}
Note that again every level-$i$ supertile forms part of some level-$(i+1)$ supertile, forcing a hierarchy of bigger and bigger interlinked squares.

We point out that even though the Robinson tilings are defined by rules governing pairs of adjacent tiles, these local constraints impose a rigid and global hierarchical structure. In particular, the nearest neighbor rules generate supertiles of arbitrarily high levels and at the same time force an extremely regular appearance of underlying cross tiles. The location and orientation of crosses, in turn, completely determine the mutual arrangement of supertiles. As can be seen in Figure~\ref{f:hierarchy}, showing part of a typical Robinson tiling, this forces an immense rigidity which is indeed the key ingredient in proving the strong aperiodicity of all such tilings. 

\begin{figure}[ht]
\setlength{\unitlength}{8mm}
\begin{picture}(12,10.5)
\thinlines
\color{cmm}
\multiput(1.9,2.75)(0,1){8}{\line(1,0){0.85}}
\multiput(3.75,2.75)(0,1){8}{\line(1,0){1}}
\multiput(5.75,2.75)(0,1){8}{\line(1,0){1}}
\multiput(7.75,2.75)(0,1){8}{\line(1,0){1}}
\multiput(9.75,2.75)(0,1){8}{\line(1,0){0.35}}
\multiput(2.75,2.75)(1,0){8}{\line(0,1){1}}
\multiput(2.75,4.75)(1,0){8}{\line(0,1){1}}
\multiput(2.75,6.75)(1,0){8}{\line(0,1){1}}
\multiput(2.75,8.75)(1,0){8}{\line(0,1){1}}
\thicklines
\multiput(2.25,3.25)(2,0){4}{\line(0,1){2}}
\multiput(2.25,7.25)(2,0){4}{\line(0,1){2}}
\multiput(2.25,3.25)(0,2){4}{\line(1,0){2}}
\multiput(6.25,3.25)(0,2){4}{\line(1,0){2}}
\Thicklines
\multiput(3.25,4.25)(0,4){2}{\line(1,0){4}}
\multiput(3.25,4.25)(4,0){2}{\line(0,1){4}}
\put(1.9,6.25){\line(1,0){3.35}}
\put(5.25,1.9){\line(0,1){4.35}}
\put(1.9,2.25){\line(1,0){8.2}}
\color{gray}
\thicklines
\multiput(1.9,2)(0,0.5){17}{\line(1,0){8.2}}
\multiput(2,1.9)(0.5,0){17}{\line(0,1){8.2}}
\end{picture}
\vspace*{-1.2cm}
\caption{The hierarchical structure in the Robinson tilings (black arrows and parity check digits suppressed for visibility).}\label{f:hierarchy}
\end{figure}
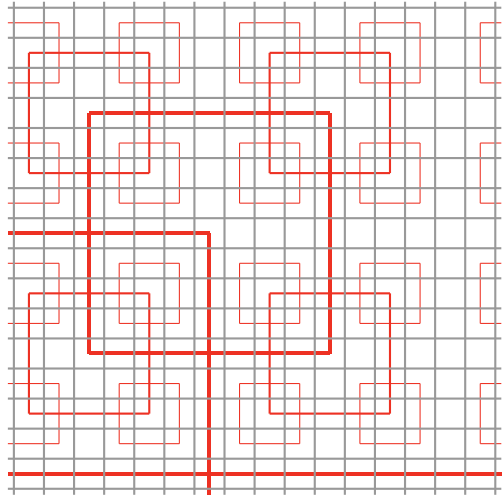

Suppose that in a given Robinson tiling the center of a level-$i$ supertiles occurs at location $(a,b)\in\zt$. We say the level-$i$ supertiles are {\em aligned in the horizontal direction} if the center crosses of all level-$i$ supertiles occur at locations with horizontal coordinates in the set $a+2^i\zz$ and they are {\em aligned in the vertical direction} if the the center crosses of all level-$i$ supertiles occur at locations with vertical coordinates in the set $b+2^i\zz$. We say the level-$i$ supertiles are {\em aligned in both directions} if they are aligned both horizontally and vertically, namely the center crosses of all level-$i$ supertiles occur in locations $(a+2^ij,b+2^ik)$, $j,k\in\zz$.

Tilings where all supertiles are aligned in both directions play an important role in our construction in Sections~\ref{s:construction} and \ref{s:minimal}. In particular, they constitute the unique minimal subshift of the Robinson tilings (see for example \cite{GJS12}) that we will use extensively to obtain the minimal factor of the Heisenberg shift of finite type to prove Theorem~\ref{t:main}.

Hereinafter we denote by $\omR\subseteq\agR^\zt$ the set of all valid $\zt$-Robinson tilings.

\section{Constructing a strongly aperiodic $\heis$-SFT}\label{s:construction}

To construct our strongly aperiodic example $(\Omega,\heis)$, we first specify its alphabet $\ag$. Next we define several sets of local rules governing how symbols can be placed on the vertices of the $\sg$-Cayley graph forming valid configurations. After this step we show that the resulting $\heis$-SFT $\Omega\subsetneq\ag^\heis$ has quite a rigid structure. We describe its $\zt$-projective subdynamics, and show that it is strongly aperiodic.

\subsection{Basic properties of the Heisenberg group}
In what follows we denote by $\heis\simeq\zt\rtimes_A\zz$ the discrete Heisenberg group, given as a semi-direct product induced by the matrix $A=\left(\begin{smallmatrix}
1& 0\\ 1 & 1 \end{smallmatrix}\right)\in\GL_2(\zz)$. $\heis$ is a 2-generator finitely presented, single-ended, non-abelian, nilpotent linear group with polynomial growth of order $4$ (for details see \cite{dlH}). Its binary operation takes the form
\[
\bigl(x,(y,z)\bigr)\cdot\bigl(a,(b,c)\bigr)=\bigl(x+a,\bigl((y,z)+A^x(b,c)\bigr)\bigr)=\bigl(x+a,(y+b,z+c+xb)\bigr)
\]
corresponding to usual matrix multiplication in $\heis$'s standard representation via upper triangular matrices, \ie where an arbitrary element $(x,(y,z))$ is seen as the unipotent integer matrix $\left(\begin{smallmatrix}1 & x & z\\ 0 & 1 & y\\ 0 & 0 & 1\end{smallmatrix}\right)$ for $x,y,z\in\zz$.

We define three particular elements:
\begin{equation*}
\xf=\left(1,(0,0)\right)\ ,\qquad
\yf=\left(0,(1,0)\right)\ ,\qquad
\zf=\left(0,(0,1)\right)
\end{equation*}
whose inverses are then given as $\xf^{-1}=\left(-1,(0,0)\right)$, $\yf^{-1}=\left(0,(-1,0)\right)$ and $\zf^{-1}=\left(0,(0,-1)\right)$. A general element $(x,(y,z))\in\heis$ can be obtained as the product $\zf^z\yf^y\xf^x=(x,(y,z))$. Hence we may fix $\sg=\{\xf,\yf,\zf\}$ as our set of generators for $\heis$. Since $\zf=\xf\yf\xf^{-1}\yf^{-1}$, the elements $\xf$ and $\yf$ are sufficient to generate $\heis$ and in fact yield the presentation
\[
\heis=\langle\xf,\yf\mid \yf^{-1}\xf\yf\xf^{-1}=\xf\yf\xf^{-1}\yf^{-1}=\yf\xf^{-1}\yf^{-1}\xf\rangle\ .
\]
For convenience we include the element $\zf$ as an additional (third) generator, resulting in the more commonly used presentation
\[
\heis=\langle\xf,\yf,\zf\mid \xf\yf\xf^{-1}\yf^{-1}=\zf,\ \xf\zf=\zf\xf,\ \yf\zf=\zf\yf\rangle\ .
\]
Note in addition that $\zf$ generates the center of $\heis$, which is isomorphic to an infinite cyclic group. The subgroups $\langle\xf,\zf\rangle\,,\langle\yf,\zf\rangle\trianglelefteq\heis$ are normal and isomorphic to $\zt$, whereas the two infinite cyclic subgroups $\langle\xf\rangle$ and $\langle\yf\rangle$ are not normal in $\heis$.

Figure~\ref{f:HCayley} shows the $\sg$-labeled left Cayley graph of $\heis$. Here -- and in all its visualizations in the remainder of this article -- we view $\langle\yf,\zf\rangle$-cosets of $\heis$ as horizontal lattices isomorphic to $\zt$, which are then connected through vertical (upward-pointing) edges representing the generator $\xf$. Due to the fact that $\heis$ is a semi-direct product induced by the matrix $A=\left(\begin{smallmatrix}
1& 0\\ 1 & 1 \end{smallmatrix}\right)$, the slope of those vertical edges depends only on the $\yf$-value of the vertices connected by them.

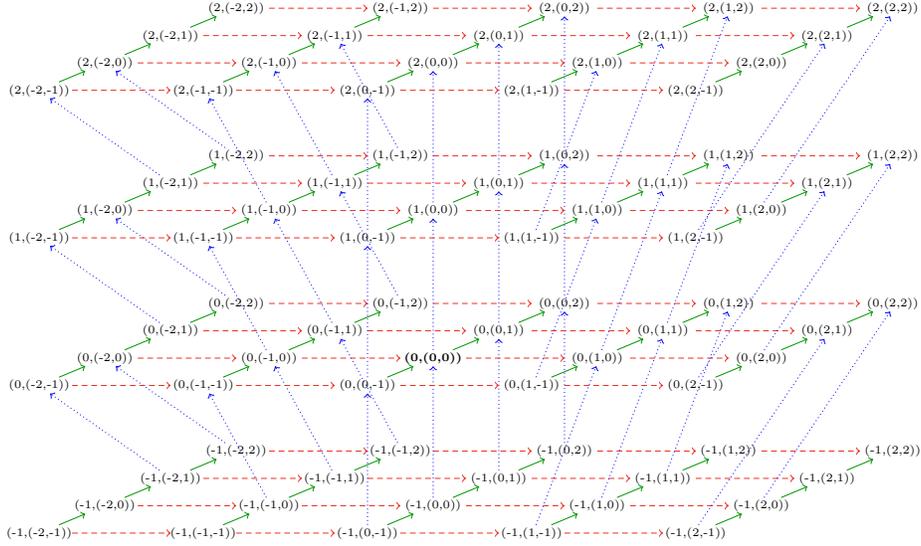
\begin{figure}[ht]
\vspace*{1ex}
\resizebox {12.5cm} {!}
{
\begin{tikzpicture}

% Layers -1 through 1 
\foreach \y in {-1,0,1}
  {
  % x-edges (red)
  \foreach \x in {-2,...,1}
    \foreach \z in {-1,...,2}
      \draw[line width=0.5pt,->,densely dashed,color=cmm] (3*\x+1.2*\z+0.6,2.7*\y+0.5*\z) -- (3*\x+1.2*\z+2.4,2.7*\y+0.5*\z);
  % z-edges (green)
  \foreach \x in {-2,...,2}
    \foreach \z in {-1,...,1}
      \draw[line width=0.5pt,->,color=green] (3*\x+1.2*\z+0.36,2.7*\y+0.5*\z+0.15) -- (3*\x+1.2*\z+0.84,2.7*\y+0.5*\z+0.35);
  % y-edges (blue) between layers -1 and 0, between 0 and 1 and between 1 and 2
  \foreach \z in {-1,...,2}
    \draw[line width=0.5pt,->,dash pattern=on 0.5pt off 1.5pt,color=blue] (1.2*\z,2.7*\y+0.5*\z+0.15) -- (1.2*\z,2.7*\y+0.5*\z+2.55);
  \foreach \z in {-1,...,1}
    {
    \draw[line width=0.5pt,->,dash pattern=on 0.5pt off 1.5pt,color=blue] (1.2*\z+3.08,2.7*\y+0.5*\z+0.15) -- (1.2*\z+4.18,2.7*\y+0.5*\z+3.05);
    \draw[line width=0.5pt,->,dash pattern=on 0.5pt off 1.5pt,color=blue] (1.2*\z-1.85,2.7*\y+0.5*\z+0.65) -- (1.2*\z-2.89,2.7*\y+0.5*\z+2.55);
    }
  \foreach \z in {-1,...,0}
    {
    \draw[line width=0.5pt,->,dash pattern=on 0.5pt off 1.5pt,color=blue] (1.2*\z+6.05,2.7*\y+0.5*\z+0.15) -- (1.2*\z+8.35,2.7*\y+0.5*\z+3.55);
    \draw[line width=0.5pt,->,dash pattern=on 0.5pt off 1.5pt,color=blue] (1.2*\z-3.8,2.7*\y+0.5*\z+1.15) -- (1.2*\z-5.8,2.7*\y+0.5*\z+2.55);
    }
  }

% Last layer (layer 2)
\foreach \y in {2}
  {
  % x-edges (red)
  \foreach \x in {-2,...,1}
    \foreach \z in {-1,...,2}
      \draw[line width=0.5pt,->,densely dashed,color=cmm] (3*\x+1.2*\z+0.6,2.7*\y+0.5*\z) -- (3*\x+1.2*\z+2.4,2.7*\y+0.5*\z);
  % z-edges (green)
  \foreach \x in {-2,...,2}
    \foreach \z in {-1,...,1}
      \draw[line width=0.5pt,->,color=green] (3*\x+1.2*\z+0.36,2.7*\y+0.5*\z+0.15) -- (3*\x+1.2*\z+0.84,2.7*\y+0.5*\z+0.35);
  }

% Coordinate labels for group elements (nodes)
\foreach \x in {-2,...,2}
  \foreach \y in {-1,...,2}
    \foreach \z in {-1,...,2}
    {
      \ifthenelse{\x = 0 \AND \y = 0 \AND \z = 0}
        {\node (\x\y\z) at (3*\x+1.2*\z,2.7*\y+0.5*\z) {\bfseries\tiny (0,(0,0))};}
        {\node (\x\y\z) at (3*\x+1.2*\z,2.7*\y+0.5*\z) {\tiny(\y,(\x,\z))};}
    }
\end{tikzpicture}
}
\vspace*{1ex}
\caption{The $\sg$-labeled left Cayley graph of the discrete Heisenberg group where dotted, dashed, and solid edges correspond to generators $\xf,\;\yf$ and $\zf$ respectively.}\label{f:HCayley}
\end{figure}

\subsection{The first step of the construction: establishing weak aperiodicity}
Let $\agR$ be the set of 56 Robinson tiles as depicted in Figure~\ref{f:tiles}. Symbols from this alphabet will be used to fill alternating $\langle\yf,\zf\rangle$-cosets of $\heis$, to set up the same rigid hierarchical structure of crosses appearing in level-$i$ supertiles ($i\in\nz$) as seen in the original $\zt$-Robinson tilings. Remaining $\langle\yf,\zf\rangle$-cosets will then be filled with symbols from a disjoint alphabet $\agC$ actually consisting of pairs from two disjoint sets. Denote by $\agB:=\{0,1,\bfz,\bfo\}$ a binary alphabet containing two distinct versions of the digits 0 and 1 and let $\agD$ be an alphabet consisting of 4 unit square tiles as shown in Figure~\ref{f:agD}. 

\begin{figure}[ht]
\vspace*{1ex}
\begin{tikzpicture}[scale=0.5]
\foreach \x in {2,3}
 {
 \draw[line width=1.5pt] (1.25+2.4*\x,0.5) -- ++(0,1.5);
 }
\foreach \x in {1,3}
 {
 \draw[line width=1.5pt] (0.5+2.4*\x,2.0) -- ++(1.5,-1.5);
 }
\foreach \x in {0,1,2,3}
 {
 \draw[line width=1pt,color=gray] (0.5+2.4*\x,0.5) -- ++(1.5,0) -- ++(0,1.5) -- ++(-1.5,0) -- cycle;
 }
\end{tikzpicture}
\caption{The alphabet $\agD$ containing four square tiles with different combinations of up to two black line segments.}\label{f:agD}
\end{figure}
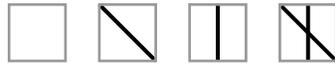

The square on the left is called a blank tile. Moreover we refer to the northwest-southeast pointing black line in the second and fourth tile as the diagonal, and the vertical line in the third and fourth tiles as the forward line segment. Forming the Cartesian product of those two alphabet sets we obtain the 16 element set $\agC:=\agB\times\agD$, called the {\em counter-alphabet}, whose symbols are thus ordered pairs $(b,s)$ with $b\in\agB$ and $s\in\agD$. One may think of the second component $s$ as being a decoration superimposed on top of the binary digit $b$ giving it the option to send additional signals throughout the configuration (propagating black lines). In the final step of our construction local constraints on these symbols will be used to implement a family of synchronized binary counters that destroy all periodicity left in the previous steps. The alphabet of our $\heis$-SFT $\Omega$ is thus the disjoint union $\ag:=\agR\sqcup\agC$ and every configuration in $\Omega$ can be seen as a function from $\heis$ to the finite alphabet set $\ag$ avoiding certain patterns.

We continue by specifying the first set of local rules. These will be nearest neighbor rules determining which symbols can be adjacent within a $\langle\yf,\zf\rangle$-coset of $\heis$. For every configuration $\omega\in\Omega$, if an element $h\in\heis$ is filled with a symbol $\omega(h)\in\agR$, then both elements $\yf h$ and $\zf h$ have to be filled with symbols from $\agR$ as well. Similarly, if $\omega(h)$ is a symbol from $\agC$ again both its neighboring symbols $\omega(\yf h)$ and $\omega(\zf h)$ have to be in $\agC$. Those two rules force each single $\langle\yf,\zf\rangle$-coset to be entirely filled with symbols of either $\agR$ or $\agC$.

Since the two infinite order elements $\yf$ and $\zf$ commute, the subgroup $\langle\yf,\zf\rangle\leq\heis$ generated by them is isomorphic to $\zt$ and hence we may identify every $\langle\yf,\zf\rangle$-coset $\langle\yf,\zf\rangle\,\xf^x=\bigl\{(x,(y,z))\in\heis\,:\,y,z\in\zz\bigr\}$ (for $x\in\zz$ fixed) with a copy of $\zt$ where the direction of $\yf$ (assumed to point to the right) generates horizontal rows and the direction of $\zf$ (pointing forward) generates horizontal columns. The next set of nearest neighbor rules affects symbols from $\agR$ as follows: Suppose element $h\in\heis$ is filled with a Robinson tile $\omega(h)\in\agR$. The symbol $\omega(\yf h)\in\agR$ seen at coordinate $\yf h$ -- still in the same $\langle\yf,\zf\rangle$-coset -- then has to be a Robinson tile which is allowed to be horizontally adjacent to $\omega(h)$ sitting on its left. Similarly the symbol $\omega(\zf h)\in\agR$ seen at coordinate $\zf h$ -- again in the same $\langle\yf,\zf\rangle$-coset -- is forced to be a Robinson tile which is allowed to sit directly above $\omega(h)$ in a valid $\zt$-Robinson tiling. Overall those rules imply that the configuration seen on any $\langle\yf,\zf\rangle$-coset filled with symbols from $\agR$ is in fact a valid $\zt$-Robinson tiling.

The subsequent set of local rules induces further constraints on $\langle\yf,\zf\rangle$-cosets filled with symbols from $\agC$. Suppose element $h\in\heis$ is filled with a symbol $\omega(h)=(b_h,s_h)\in\agC$, whose second component $s_h\in\agD$ has a diagonal segment. In this case, both symbols $\omega(\zf\yf^{-1} h)=(b_{\zf\yf^{-1} h},s_{\zf\yf^{-1} h})\in\agC$ and $\omega(\zf^{-1}\yf h)=(b_{\zf^{-1}\yf h},s_{\zf^{-1}\yf h})\in\agC$ have to continue this diagonal line, so that their second component's tile -- $s_{\zf\yf^{-1} h}$ and $s_{\zf^{-1}\yf h}$ respectively -- again has to have a black diagonal segment connecting upper-left and lower-right corners. Analogously for the forward segment, \ie $s_h\in\agD$ with a black forward segment connecting its top to its bottom edge forces the existence of forward line segments in the tiles seen in the second component of both symbols $\omega(\zf h)\in\agC$ and $\omega(\zf^{-1} h)\in\agC$. Hence the presence of a single symbol from $\agD$ with a diagonal/forward segment forces the continuation of the corresponding diagonal/forward line across all diagonally/forward adjacent symbols within the $\langle\yf,\zf\rangle$-coset. The purpose of those lines is to synchronize information in the final step of our construction and we will come back to the consequences of this {\em segments have to continue} rule later.

For now, let us continue by explaining some of the nearest neighbor constraints we need to impose along the $\xf$-direction. Since we want configurations in adjacent $\langle\yf,\zf\rangle$-cosets to alternate between Robinson tilings and certain configurations from the counter-alphabet $\agC$ we simply enforce the following. If for any $h\in\heis$, $\omega(h)\in\agR$ is a Robinson tile, then $\omega(\xf h)\in\agC$ has to be a counter-symbol, while conversely $\omega(h)\in\agC$ implies $\omega(\xf h)\in\agR$.

The final constraints in this first step of our construction are local rules forcing the {\em propagation of crosses} along direction $\xf$: Assume that the symbol $\omega(h)$ seen at $h\in\heis$ is a Robinson tile, \ie $\omega(h)\in\agR$. If it is a cross tile, the first component of the counter-symbols placed at $\xf h$ as well as at $\xf^{-1} h$ has to be one of the bold-face digits $\bfz$ or $\bfo$, whereas if the symbol $\omega(h)$ is a non-cross Robinson tile, first components in both those counter-symbols have to be the non-bold-face version of a digit $0$ or $1$. Symmetrically, seeing a counter-symbol with its first component $\bfz$ or $\bfo$ at position $h\in\heis$ forces arbitrarily oriented crosses at locations $\xf h$ and $\xf^{-1} h$, while a counter-symbol $\omega(h)=(b_h,s_h)\in\agC$ with $b_h\in\{0,1\}$ is only allowed in combination with $\omega(\xf h)$ and $\omega(\xf^{-1} h)$ both being non-cross Robinson tiles. Hence bold-face digits in the first component of counter-alphabet symbols are basically used to connect locations with crosses, while non-bold-face digits transport across the information about sites containing non-cross Robinson symbols sitting adjacent in the $\xf$-direction in neighboring $\langle\yf,\zf\rangle$-cosets.

While understanding the effects of all previous local rules is rather straight-forward, here it is definitely time to pause for a moment and check that this last set of constraints does not render our construction void by making it infeasible to fill the entire Heisenberg group with symbols without violating those rules. Recall that $\bigl(x,(y,z)\bigr)\cdot\bigl(a,(0,0)\bigr)=\bigl(x+a,(y,z+xb)\bigr)$, therefore to produce valid configurations in our $\heis$-SFT $\Omega$ we have to show the a priori not obvious fact that a sequence of valid $\zt$-Robinson tilings can be chosen for every other $\langle\yf,\zf\rangle$-coset, so that crosses do align exactly in the described fashion. This argument is the content of the following lemma.

\begin{lem}\label{l:preexistence}
There are families of regular $\zt$-Robinson tilings indexed by $\zz$, which can be used to tile alternating $\langle\yf,\zf\rangle$-cosets, allowing us to fill in the remaining sites of $\heis$ with symbols from the counter-alphabet $\agC$ and respecting all local rules specified so far.
\end{lem}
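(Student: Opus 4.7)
The plan is to construct at least one valid configuration by placing a suitably chosen Robinson tiling on every even $\langle\yf,\zf\rangle$-coset (the ``Robinson cosets'') and filling each intermediate (odd) coset with counter-alphabet symbols that simply mirror the cross/non-cross pattern of their Robinson neighbours. I first unpack what the cross-propagation rule imposes across consecutive Robinson cosets: for $h=(2n,(y,z))$, the identity $\xf^2 h=(2n+2,(y,z+2y))$ combined with two applications of the cross-propagation rule (passing through the intermediate counter coset at $x=2n+1$) shows that $\omega(h)$ is a Robinson cross if and only if $\omega(\xf^2 h)$ is. Writing $X_n\subseteq\zt$ for the set of cross positions of the coset $x=2n$, this translates into the single condition $X_{n+1}=\sigma(X_n)$, where $\sigma\colon\zt\to\zt$ is the shear $(y,z)\mapsto(y,z+2y)$.

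The heart of the argument is then to exhibit a Robinson tiling $T$ whose cross set is $\sigma$-invariant, so that the same $T$ may be placed on every Robinson coset. The candidate is any tiling in the fully aligned minimal subshift of $\omR$ singled out at the end of Section~\ref{s:Robinson}. With the origin chosen so that the level-$j$ cross positions of $T$ are exactly $\{(y,z)\in\zt : v_2(y)=v_2(z)=j\}$ (where $v_2$ denotes the $2$-adic valuation, with the convention $v_2(0)=\infty$), the full cross set takes the clean form
\[
X(T)\;=\;\{(y,z)\in\zt : v_2(y)=v_2(z)\}.
\]
Shear-invariance is then a one-line computation: for any $(y,z)\in X(T)$ with $v_2(y)=v_2(z)=j<\infty$, one has $v_2(2y)=j+1>v_2(z)$, whence $v_2(z+2y)=v_2(z)=v_2(y)$, so $(y,z+2y)\in X(T)$; the reverse inclusion follows by applying the same argument to $\sigma^{-1}$.

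To assemble the configuration I set $\omega\bigl((2n,(y,z))\bigr):=T(y,z)$ on every even coset, and on each odd coset I take counter-symbols $(b,s)\in\agC$ with $s$ the blank tile of $\agD$ (which renders the ``segments continue'' rule vacuous) and $b$ bold (say $\bfz$) precisely where the two $\xf$-neighbours carry crosses, and non-bold (say $0$) otherwise. By $\sigma$-invariance this assignment is well defined and consistent with the cross-propagation rule, while the alphabet-alternation rule and the Robinson rules (R1)--(R2) hold by construction. Varying $T$ over the fully aligned subshift (for instance by $\zf$-translation, which commutes with $\sigma$) yields the claimed $\zz$-indexed families of valid configurations.

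The one substantive obstacle is the identification of a Robinson tiling whose cross set is exactly preserved by $\sigma$; once the $v_2$-description of aligned tilings is available, the invariance is transparent and the rest of the construction reduces to straightforward bookkeeping.
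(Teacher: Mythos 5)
Your proposal is correct and follows essentially the same route as the paper: both place one fixed, fully aligned Robinson tiling on every even $\langle\yf,\zf\rangle$-coset and fill the odd cosets with blank-decorated digits that are bold exactly at the sheared cross positions. Your $2$-adic description $\{(y,z):v_2(y)=v_2(z)\}$ is precisely the paper's set $C$, and your invariance of $C$ under the double shear $(y,z)\mapsto(y,z+2y)$ is just the paper's computation that the single shear carries $C$ to the intermediate set $B$ of bold-digit locations and $B$ back to $C$.
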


\begin{proof}
There is a $\zt$-Robinson tiling $\rho\in(\omR,\zt)$ which has its level-$i$ supertiles ($i\in\nz$ arbitrary) centered at coordinates
\[
(2^i+2^{i+1}\zz)\times(2^i+2^{i+1}\zz)\subsetneq\zt\ .
\]
The positioning of the supertiles in this tiling implies that the crosses occur exactly at the coordinates in
\[
C=\{(0,0)\}\cup\bigcup_{i\in\nz}\bigl((2^{i-1}+2^i\zz)\times(2^{i-1}+2^i\zz)\bigr)\subsetneq\zt\ .
\]
Notice that $C=\lim_{i\rightarrow\infty}C_i$, where the sets $C_i$ are as defined in \eqref{e:ci}.

For each $x\in 2\zz$, place a copy of the tiling $\rho$ into the $\langle\yf,\zf\rangle$-coset $\langle\yf,\zf\rangle\,\xf^x$, \ie let $\omega\bigl((x,(y,z))\bigr):=\rho(y,z)$. Next define a set of locations for bold-face digits
\[
B:=\{(0,0)\}\cup\bigcup_{i\in\nz}\bigl((2^{i-1}+2^i\zz)\times 2^i\zz\bigr)\subsetneq\zt
\]
and for every $x\in 2\zz+1$ extend $\omega$ to the remaining $\langle\yf,\zf\rangle$-cosets $\langle\yf,\zf\rangle\,\xf^x$ by filling in symbols from $\agC$ as follows:
\[
\omega\bigl((x,(y,z))\bigr):=\begin{cases}\bigl({\bfz},\tilb\,\bigr) \text{ iff $(y,z)\in B$}\\
\bigl(0,\tilb\,\bigr) \text{ iff $(y,z)\notin B$}\end{cases}\ .
\]
It is now straightforward to check that this gives a $\heis$-configuration $\omega\in\ag^\heis$ respecting all rules specified so far. 
Suppose $(y,z)\in C$. By definition of $C$, $y,z\in 2^{i-1}+2^i\zz$ and therefore $y+z\in 2^i\zz$, and $(y,z+y)\in B$. It is equally straightforward to check that if $(y,z)\in B$, then $(y,z+y)\in C$. Note that $\xf\cdot(x,(y,z))=(x+1,(y,z+y))$ and therefore for each $x\in2\zz$ multiplication by the generator $\xf$ on the left transforms the set of crosses into the set of bold-face digits, \ie $\xf\cdot\bigl\{(x,(y,z))\,:\,(y,z)\in C\bigr\}=\bigl\{(x+1,(y,z))\,:\,(y,z)\in B\bigr\}$ while for $x\in 2\zz+1$ the same multiplication transforms the set of bold-face digits back into the set of crosses, namely, $\xf\cdot\bigl\{(x,(y,z))\,:\,(y,z)\in B\bigr\}=\bigl\{(x+1,(y,z))\,:\,(y,z)\in C\bigr\}$.
\end{proof}

Lemma~\ref{l:preexistence} shows that the constraints defined so far do not preclude the existence of weakly periodic behavior. In fact the particular configuration $\omega$ constructed in the proof has the property that $\langle\xf^2\rangle\leq\stab_\heis(\omega)$.\\[-1ex]

\subsection{Alignment of supertiles in the $\zf$ direction}
In this subsection we show that the propagation of crosses constraint imposes some additional rigidity on the $\zt$-Robinson tilings which can be seen in our construction. Let $\omega\in\ag^\heis$ be a configuration satisfying all previous constraints. If all $\zf$-columns
\[
\langle\zf\rangle\,\yf^y\xf^x=\{(x,(y,z))\in\heis\,:\,z\in\zz\}\quad (x,y\in\zz)
\]
containing Robinson symbols see crosses spaced periodically at distance $2^i$ for a uniquely determined $i\in\nz\cup\{\infty\}$ we say that the level-$i$ supertiles in the tiling are {\em fully aligned along the $\zf$-direction}. (Here $i\in\nz$ guarantees the existence of $\overline{z}\in\zz$ such that $\{z\in\zz\,:\,\text{$\omega_{(x,(y,z))}$ is a cross tile}\}=\overline{z}+2^i\zz$, while $i=\infty$ means that $\bigl|\{z\in\zz\,:\,\text{$\omega_{(x,(y,z))}$ is a cross tile}\}\bigr|\leq 1$.) Moreover the propagation of crosses rule assures that if the supertiles are aligned along the $\zf$-direction, then the value $i$ only depends on the value of $y$ and is kept constant across an entire $\langle\xf,\zf\rangle$-coset in $\omega$. An analogous definition can be made to describe tilings that are {\em fully aligned along the $\yf$-direction}.

The next lemma shows that our nearest neighbor rules guarantee alignment in one direction.

\begin{lem}\label{l:alignment}
In each $\langle\yf,\zf\rangle$-coset filled with symbols from the Robinson alphabet $\agR$, the local rules specified so far force level-$i$ supertiles (for all $i\in\nz$) to be fully aligned along the $\zf$-direction.
\end{lem}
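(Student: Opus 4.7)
The strategy is to exploit the group law in $\heis$ to translate the propagation-of-crosses rule into a $\yf$-dependent shear relating cross positions in consecutive Robinson-filled cosets, and then invoke the axis-aligned rigidity of Robinson tilings to rule out any non-arithmetic column patterns.

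\emph{Step 1 (shear relation).} A direct computation gives $\xf^2\cdot(x,(y,z))=(x+2,(y,z+2y))$. Combined with the bidirectional propagation rule (a cross in a Robinson-filled coset at $h$ forces a bold-face digit at $\xf h$ in the neighbouring counter-coset, which in turn forces a cross at $\xf^2 h$ in the next Robinson-filled coset, and vice versa), this produces the bijection: $(y,z)$ is a cross position in coset $x$ if and only if $(y,z+2y)$ is a cross position in coset $x+2$. Iterating, the cross sets $C_x\subseteq\zt$ satisfy $C_{x+2k}=\{(y,z+2ky):(y,z)\in C_x\}$ for every $k\in\zz$, and each $C_{x+2k}$ must itself be the cross set of a valid $\zt$-Robinson tiling.

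\emph{Step 2 (contradiction via a sheared fault).} Suppose for contradiction that the Robinson tiling $T_{x_0}$ in some Robinson-filled coset $x_0$ fails to be $\zf$-aligned. Then in some $\zf$-column the crosses do not form an arithmetic progression, which in the hierarchy of Robinson supertiles forces a horizontal row fault line at some $z$-coordinate $z=z^*$ extending uniformly across all columns of $T_{x_0}$ (cf.~\cite{GJS12}). Applying the shear with $k=1$, in the tiling $T_{x_0+2}$ this single horizontal fault transforms into the locus $\{(y,z^*+2y):y\in\zz\}$, a diagonal line of slope $2$ in the $(\yf,\zf)$-plane. But the axis-aligned nature of the Robinson supertile hierarchy permits only horizontal or vertical fault lines, never diagonal ones, so $T_{x_0+2}$ cannot be a valid $\zt$-Robinson tiling. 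This contradicts Step 1; hence no row fault exists in $T_{x_0}$, every $\zf$-column carries a periodic cross pattern, and level-$i$ supertiles are fully $\zf$-aligned for all $i\in\nz$. Since $x_0$ was arbitrary, the same holds in every Robinson-filled coset.

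\emph{Main obstacle.} The delicate ingredient is the rigidity claim appearing in Step 2: a single non-arithmetic $\zf$-column in a Robinson tiling must be caused by a uniform horizontal row fault line shared by every column at a common $z$-coordinate. Establishing this requires a level-by-level induction on the supertile hierarchy, tracking how a local misalignment at level $i$ propagates through the \arcol contour bounding each $(2^{i+1}-1)\times(2^{i+1}-1)$ supertile and is ultimately pinned down by the alternating crosses constraint at level~$1$.
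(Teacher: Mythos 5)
Your strategy is sound and reaches the right conclusion, but it takes a genuinely different route from the paper, and it leaves its heaviest ingredient unproved. Both arguments begin identically: the bidirectional propagation-of-crosses rule plus the computation $\xf^2\cdot(x,(y,z))=(x+2,(y,z+2y))$ yields the shear relation $C_{x+2k}=\{(y,z+2ky):(y,z)\in C_x\}$ between cross sets of consecutive Robinson-filled cosets, and both exploit the fact that this shear is a $y$-dependent vertical translation. From there you argue \emph{globally}: you invoke the structure theory of plain $\zt$-Robinson tilings -- any failure of column-periodicity is concentrated on a single straight, axis-aligned fault line -- and observe that the shear would carry a horizontal fault at height $z^*$ to the slope-$2$ locus $\{(y,z^*+2y)\}$, which no valid Robinson tiling can support; since the shear translates each column by $2y$, the phase jumps in the image really do occur at $y$-dependent heights, so the contradiction is genuine. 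The paper instead argues \emph{locally}, by induction on the supertile level: a corner cross of a level-$(i+1)$ supertile adjacent in the $\zf$-direction to one centred at $(x,(y,z))$ admits only two candidate centres, $(x,(y,z-2^{i+2}))$ or $(x,(y-2^{i+1},z-2^{i+2}))$, and the misaligned candidate is excluded because $\xf^{-2}$ sends it and $(x,(y,z))$ to sites with the \emph{same} $\zf$-coordinate $z-2y$ in the coset two layers below, where they collide with the crosses already forced there by propagation. The trade-off is clear: your route is conceptually shorter but outsources essentially all of the combinatorics to the fault-line classification, while the paper's argument needs only the elementary local fact that a corner determines its supertile's centre up to two choices, and never appeals to the global classification.

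The one caveat is that the deferred claim -- a non-arithmetic $\zf$-column forces a uniform horizontal fault shared by all columns at a common height -- is precisely where the work lives. It is true and is part of the known structure theory (see \cite{GJS12} and Robinson's original analysis), so citing it is legitimate; but as written your proposal moves the burden of proof rather than discharging it, and proving that classification from scratch is comparable in difficulty to the lemma itself. If you either cite a precise published statement of the fault-line classification or carry out the level-by-level induction you sketch in your ``main obstacle'' paragraph, the argument goes through.
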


\begin{proof}
Let us fix a $\langle\yf,\zf\rangle$-coset, say $\langle\yf,\zf\rangle\,\xf^x\subsetneq\heis$ for some $x\in\zz$, containing symbols from $\agR$. The Robinson rules force the existence of some level-$1$ supertile, \ie a $3\times 3$ square patch bound together by a contour of \arcol arrows with crosses at its center as well as its 4 corners and arms in the remaining 4 tiles along its border. Assume this level-$1$ supertile is centered on coordinate $(x,(y,z))\in\heis$ ($y,z\in\zz$). The alternating crosses constraint then implies that the entire subcoset $\langle\yf^2,\zf^2\rangle\,\zf^{z-1}\yf^{y-1}\xf^x$ is filled with cross tiles all of which are corners of level-$1$ supertiles. Now the propagation of crosses along the $\xf$-direction guarantees the presence of cross tiles in all of $\langle\xf^2,\yf^2,\zf^2\rangle\,\zf^{z-1}\yf^{y-1}\xf^x$. Moreover it also allows us to deduce the position of the level-$1$ supertiles adjacent (along the $\zf$-direction) to the one centered on $(x,(y,z))$ as follows.

The cross tile at $(x,(y-1,z-3))$ can only be the corner of a level-$1$ supertile with center either at $(x,(y-2,z-4))$ or at $(x,(y,z-4))$. (Having its center at $(x,(y-2,z-2))$ or at $(x,(y,z-2))$ is impossible due to the ordinary $\zt$-Robinson rules, because this would force the cross tile at $(x,(y-1,z-1))$ to be part of two distinct level-$1$ supertiles.) Since the central position of a supertile is always filled with a cross, the first case would see cross tiles at both coordinates $(x,(y,z))$, the center of the level-$1$ supertile we started with, and $(x,(y-2,z-4))$. However these crosses would need to propagate along the $\xf$-direction, enforcing cross tiles at $(x-2,(y,z-2y))$ and $(x-2,(y-2,z-4-2(y-2)))=(x-2,(y-2,z-2y))$. As noted before, the entire set $\langle\xf^2,\yf^2,\zf^2\rangle\,\zf^{z-1}\yf^{y-1}\xf^x$ contains cross tiles and thus there would be crosses at $(x-2,(y-1,z-1-2y))$ and $(x-2,(y-1,z+1-2y))$ as well. Clearly this leaves no valid possibility to fill coordinate $(x-2,(y-1,z-2y))$ with a Robinson symbol. Hence the level-$1$ supertile containing $(x,(y-1,z-3))$ as one of its corners has to be centered on $(x,(y,z-4))$ and is therefore aligned along the $\zf$-direction with the one centered on $(x,(y,z))$. A symmetric argument shows that the level-$1$ supertile containing coordinate $(x,(y-1,z+3))$ has to be centered at $(x,(y,z+4))$. This local alignment then propagates across the entire $\zf$-direction, enforcing one level-$1$ supertile centered at each coordinate in $\langle\zf^4\rangle\,\zf^z\yf^y\xf^x$.

Proceeding by induction on the level of supertiles we establish the general result using a similar reasoning: A level-$(i+1)$ supertile centered on $(x,(y,z))$ contains $4$ level-$i$ supertiles centered on $(x,(y\pm 2^i,z\pm 2^i))$. Due to the induction hypothesis those give rise to two infinite families of aligned level-$i$ supertiles respectively centered on $\langle\zf^{2^{i+1}}\rangle\,\zf^{z-2^i}\yf^{y\pm 2^i}\xf^x$, which in turn have to group together to form more level-$(i+1)$ supertiles. Again there are only two possibilities for the center coordinate of the one containing the level-$i$ supertile centered on $(x,(y-2^i,z-3\cdot 2^i))$, namely $(x,(y-2^{i+1},z-2^{i+2}))$ or $(x,(y,z-2^{i+2}))$. However having cross tiles at $(x,(y-2^{i+1},z-2^{i+2}))$ and $(x,(y,z))$ implies by the propagation constraint additional crosses at
\[
\bigl(x-2,(y-2^{i+1},z-2^{i+2}-2(y-2^{i+1}))\bigr)=\bigl(x-2,(y-2^{i+1},z-2y)\bigr)
\]
and at $(x-2,(y,z-2y))$ which still is not compatible with the presence of cross tiles already fixed by the alignment of level-$i$ supertiles. Note that the family of aligned level-$i$ supertiles centered at $\langle\zf^{2^{i+1}}\rangle\,\zf^{z-2^i}\yf^{y-2^i}\xf^x$ mentioned above enforces level-$i$ supertiles centered on $\langle\zf^{2^{i+1}}\rangle\,\zf^{z-2y-2^i}\yf^{y-2^i}\xf^{x-2}$, making it impossible to fill the $\yf$-segment
\[
\bigl\{(x-2,(y-2^{i+1}+k,z-2y))\,:\,1\leq k<2^{i+1}\bigr\}
\]
with symbols from $\agR$ without violating any Robinson rules. Therefore we are once more left with the option of having aligned level-$(i+1)$ supertiles centered at $(x,(y,z))$ and $(x,(y,z-2^{i+2}))$ (and following a symmetric argument also at $(x,(y,z))$ and $(x,(y,z+2^{i+2}))$).
\end{proof}

\subsection{Pruning out weakly-periodic behavior: introducing counters}
Having established the existence of valid configurations in Lemma~\ref{l:preexistence} we now focus our attention on pruning out periodic behavior. To this end we impose a few additional local rules on sites filled with symbols from $\agC$, forcing collaboration between corresponding $\langle\yf,\zf\rangle$-cosets to form binary counters of arbitrarily large sizes which will run in a heavily synchronized way.

Fixing a value $y\in\zz$ observe the global structure of the configuration seen in the $\langle\xf,\zf\rangle$-coset
\[
\langle\xf,\zf\rangle\,\yf^y=\{(x,(y,z))\in\heis\,:\,x,z\in\zz\}\ .
\]
Since generators $\xf$ and $\zf$ commute and are of infinite order, we might again think of such a coset as an isomorphic copy of $\zt$. Forgetting about the slant in the $\xf$-direction shown in the $\sg$-Cayley graph in Figure~\ref{f:HCayley} for now we refer to $\zf$ as being horizontal, pointing to the right and $\xf$ as vertical, pointing upward. The cross-propagating $\xf$-columns seen in the restriction $\omega|_{\langle\xf,\zf\rangle\,\yf^y}$ of a configuration $\omega\in\ag^\heis$ obeying all previous rules necessarily partition $\langle\xf,\zf\rangle\,\yf^y$ into infinite vertical (slanted) strips whose uniform horizontal width $2^i$ ($i\in\nz\cup\{\infty\}$) is given by the periodic occurrences of crosses in aligned Robinson supertiles. Within each such strip we will run a binary counter whose current value, represented by symbols from the counter-alphabet $\agC$, is stored in the $\zf$-row segment confined between two successive cross-propagating $\xf$-columns and increases by $1$ when going up two layers (to the next $\agC$-filled $\zf$-row). Strips of finite width $2^i$ obviously limit the range of their binary counter, which after reaching its maximal value $2^{2^i}-1$ simply overflows, triggering a restart at value $0$. Figure~\ref{f:counters} shows part of a valid configuration on $\langle\xf,\zf\rangle\,\yf^y$ with counters of width $2^2=4$. 

\begin{figure}[ht]
\vspace*{-1ex}
\begin{tikzpicture}[scale=0.85]
\clip (0.2,1.6) rectangle + (13.5,13.5);
\foreach \bin [count=\x] in {
${\bfo}\,\tilb\ \ \;\;$ $0\,\tilb\ \ \;\;$ $0\,\tilb\ \ \;\;$ $1\,\tilb\ \ \;\;$,
${\bfo}\,\tilb\ \ \;\;$ $0\,\tilb\ \ \;\;$ $1\,\tilb\ \ \;\;$ $0\,\tilb\ \ \;\;$,
${\bfo}\,\tilb\ \ \;\;$ $0\,\tild\ \ \;\;$ $1\,\tilb\ \ \;\;$ $1\,\tild\ \ \;\;$,
${\bfo}\,\tilb\ \ \;\;$ $1\,\tilb\ \ \;\;$ $0\,\tilb\ \ \;\;$ $0\,\tilb\ \ \;\;$,
${\bfo}\,\tilb\ \ \;\;$ $1\,\tilb\ \ \;\;$ $0\,\tilb\ \ \;\;$ $1\,\tilb\ \ \;\;$,
${\bfo}\,\tilb\ \ \;\;$ $1\,\tilb\ \ \;\;$ $1\,\tilb\ \ \;\;$ $0\,\tilb\ \ \;\;$,
${\bfo}\,\tilc\ \ \;\;$ $1\,\tilc\ \ \;\;$ $1\,\tilf\ \ \;\;$ $1\,\tilc\ \ \;\;$,
${\bfz}\,\tilb\ \ \;\;$ $0\,\tilb\ \ \;\;$ $0\,\tilb\ \ \;\;$ $0\,\tilb\ \ \;\;$,
${\bfz}\,\tilb\ \ \;\;$ $0\,\tilb\ \ \;\;$ $0\,\tilb\ \ \;\;$ $1\,\tilb\ \ \;\;$,
${\bfz}\,\tilb\ \ \;\;$ $0\,\tilb\ \ \;\;$ $1\,\tilb\ \ \;\;$ $0\,\tilb\ \ \;\;$,
${\bfz}\,\tilb\ \ \;\;$ $0\,\tild\ \ \;\;$ $1\,\tilb\ \ \;\;$ $1\,\tild\ \ \;\;$,
${\bfz}\,\tilb\ \ \;\;$ $1\,\tilb\ \ \;\;$ $0\,\tilb\ \ \;\;$ $0\,\tilb\ \ \;\;$,
${\bfz}\,\tilb\ \ \;\;$ $1\,\tilb\ \ \;\;$ $0\,\tilb\ \ \;\;$ $1\,\tilb\ \ \;\;$,
${\bfz}\,\tilb\ \ \;\;$ $1\,\tilb\ \ \;\;$ $1\,\tilb\ \ \;\;$ $0\,\tilb\ \ \;\;$,
${\bfz}\,\tilb\ \ \;\;$ $1\,\tild\ \ \;\;$ $1\,\tilb\ \ \;\;$ $1\,\tild\ \ \;\;$,
${\bfo}\,\tilb\ \ \;\;$ $0\,\tilb\ \ \;\;$ $0\,\tilb\ \ \;\;$ $0\,\tilb\ \ \;\;$,
${\bfo}\,\tilb\ \ \;\;$ $0\,\tilb\ \ \;\;$ $0\,\tilb\ \ \;\;$ $1\,\tilb\ \ \;\;$,
${\bfo}\,\tilb\ \ \;\;$ $0\,\tilb\ \ \;\;$ $1\,\tilb\ \ \;\;$ $0\,\tilb\ \ \;\;$
}
{
 \foreach \o in {-5,...,10}
 {
  \node (\o\x) at (5*\o+0.5*\x,\x) {\bin};
  \draw[line width=0.5pt] (5*\o+0.5*\x-2.4,0.5+\x) -- (5*\o+0.5*\x-1.15,0.5+\x);
  \draw[line width=0.5pt] (5*\o+0.5*\x-2.075,0.35+\x) -- (5*\o+0.5*\x-1.475,0.65+\x);
 }
 \draw[line width=0.5pt,color=gray] (-1,0.35+\x) -- (20,0.35+\x);
 \draw[line width=0.5pt,color=gray] (-1,0.65+\x) -- (20,0.65+\x);
 \foreach \y in {-10,...,12}
 {
  \draw[line width=0.5pt,color=gray] (1.25*\y+0.5*\x-0.2,0.35+\x) -- (1.25*\y+0.5*\x+0.4,0.65+\x);
 }
}
\end{tikzpicture}
\vspace*{-1ex}
\caption{Binary counters of width $4$ as seen in an $\langle\xf,\zf\rangle$-coset. (A counter's most significant bit, distinguished as a bold-face digit, is always stored in a cross-propagating $\xf$-column, whereas its least significant bit is stored immediately to the left of the contiguous cross-propagating $\xf$-column to the right.) Width $4$ counters overflowing on level $6$ from the bottom, trigger diagonal segments on their most significant bits as well as forward segments across the entire counter. Width $2$ counters (on neighboring $\langle\xf,\zf\rangle$-cosets) overflowing on levels $2,\,6,\,10$ and $14$ trigger additional diagonal lines passing through the shown coset on those rows.
}\label{f:counters}
\end{figure}

Note that cross-propagating $\xf$-columns in distinct $\langle\xf,\zf\rangle$-cosets come from the central crosses of different level supertiles and are thus spaced at different periods $2^i$. Since supertiles of arbitrarily high level appear in every Robinson tiling there is no upper bound on the width of the strips seen in a valid configuration $\omega\in\Omega$ and thus also no global upper bound on the maximal counter value, \ie the number of steps it takes some counters to periodically repeat.

The actual construction of binary counters using only local rules is a bit technical, but well-known for $\zt$-SFTs or even $\zz$-cellular automata. Here we just provide a short description of the necessary constraints, while encouraging the reader to check that those rules indeed implement the counters we need. First, the presence of a cross-propagating $\xf$-column to the right should trigger addition of $1$ on the counter's least significant bit, \ie the first component of the counter-symbol sitting immediately to the left of the $\xf$-column containing crosses has to change its value in every counter step. Consequently, seeing symbols $\omega(h)=(b_h,s_h)\in\agC$ with $b_h\in\{0,1\}$ and $\omega(\zf h)=(b_{\zf h},s_{\zf h})\in\agC$ with $b_{\zf h}\in\{\bfz,\bfo\}$ around some site $h\in\heis$, symbol $\omega(\xf^2 h)=(b_{\xf^2 h},s_{\xf^2 h})\in\agC$ is only allowed to have its first component $b_{\xf^2 h}\neq b_{h}$. Note that since counter-symbol and Robinson-tile cosets alternate, the next value of the counter is stored two $\zf$-rows above, therefore compelling a vertical offset by $\xf^2$ instead of just $\xf$. Addition of bits then has to be executed locally proceeding left, across the entire counter strip, moving a possible carry along until reaching the next cross-propagating $\xf$-column (containing the counter's most significant bit). Looking at a particular site $h\in\heis$ with $\zf$-neighboring symbol $\omega(\zf h)=(b_{\zf h},s_{\zf h})\in\agC$ such that $b_{\zf h}\in\{0,1\}$, this procedure is taken care of by allowing only certain combinations of symbols from $\agB$ to appear in the first components $b_g$ of sites $g\in\{h,\,\zf h,\,\xf^2 h,\,\xf^2\zf h\}$ (also occupied by symbols from $\agC$). The list of 16 allowed locally admissible patterns is shown in Figure~\ref{f:additionpatterns}.

\begin{figure}[ht]
\begin{multline*}
\hspace*{1.5cm}
%\fbox{
\begin{matrix}
b_{\xf^2 h}\!\! & \!b_{\xf^2\zf h}\\
b_{h}\!\! & \!b_{\zf h}
\end{matrix}
%}
\ \in\ 
\Biggl\{\,
\begin{matrix}
0\ 0\\
0\ 0
\end{matrix}\ ,\ 
\begin{matrix}
0\ 1\\
0\ 0
\end{matrix}\ ,\ 
\begin{matrix}
1\ 0\\
1\ 0
\end{matrix}\ ,\ 
\begin{matrix}
1\ 1\\
1\ 0
\end{matrix}\ ,\ 
\begin{matrix}
0\ 1\\
0\ 1
\end{matrix}\ ,\ 
\begin{matrix}
1\ 0\\
0\ 1
\end{matrix}\ ,\ 
\begin{matrix}
0\ 0\\
1\ 1
\end{matrix}\ ,\ 
\begin{matrix}
1\ 1\\
1\ 1
\end{matrix}\ ,\ \\
\begin{matrix}
\bfz\ 0\\
\bfz\ 0
\end{matrix}\ ,\ 
\begin{matrix}
\bfz\ 1\\
\bfz\ 0
\end{matrix}\ ,\ 
\begin{matrix}
\bfo\ 0\\
\bfo\ 0
\end{matrix}\ ,\ 
\begin{matrix}
\bfo\ 1\\
\bfo\ 0
\end{matrix}\ ,\ 
\begin{matrix}
\bfz\ 1\\
\bfz\ 1
\end{matrix}\ ,\ 
\begin{matrix}
\bfo\ 0\\
\bfz\ 1
\end{matrix}\ ,\ 
\begin{matrix}
\bfz\ 0\\
\bfo\ 1
\end{matrix}\ ,\ 
\begin{matrix}
\bfo\ 1\\
\bfo\ 1
\end{matrix}
\,\Biggr\}\ .
\hspace*{1cm}
\end{multline*}
\vspace*{-2ex}
\caption{Whenever site $\zf h\in\heis$ contains a counter symbol $(b_{\zf h},s_{\zf h})\in\agC$ with $b_{\zf h}\in\{0,1\}$, we allow exactly 16 locally admissible ways to assign binary symbols to surrounding sites $h$, $\xf^2 h$ and $\xf^2\zf h$.}\label{f:additionpatterns}
\end{figure}

\begin{rem}\label{r:infcount}
For reasons that will become relevant in Section~\ref{s:minimal} we have to comment on the evolution of infinite width binary counters, which -- according to the hierarchical structure of an arbitrary Robinson tiling -- may exist in at most one $\langle\xf,\zf\rangle$-coset. Such an {\it exceptional coset} either contains a single cross-propagating $\xf$-column, dividing it into two ``halfplanes'' each of which is filled by one of two separate infinite width counters or -- in the absence of any cross-propagating $\xf$-column -- the entire $\langle\xf,\zf\rangle$-coset is occupied by a single infinite width binary counter.

In the former case the counter running in the left halfplane still has a least significant bit (located immediately to the left of the cross-propagating $\xf$-column). Hence this counter evolves as deterministically as any of its finite width cousins. The absence of a least significant bit for the counter running in the other halfplane, as well as for the unique infinite width counter in the latter case, however, allows for a choice of whether and on which (unique) $\zf$-row the counter value increases. This causes those counters to follow a rather different, partially non-deterministic behavior explained in the following lemma.
\end{rem}

In what follows we refer to an evolution from $\bfo11\ldots1$ to $\bfz00\ldots0$ as a {\em total overflow} and an evolution from $\bfz w011\ldots1$ to $\bfz w100\ldots0$ (with $w\in\{0,1\}^*$) as a {\em partial overflow}.

\begin{lem}\label{l:infcount}
An infinite width binary counter with a least significant bit increases its value in each step -- exactly as a finite width counter -- leading to at most one total overflow.

The value of an infinite width binary counter without a least significant bit stays unchanged across its entire $\langle\xf,\zf\rangle$-coset, unless there is a single increase produced by a partial or total overflow at a unique but arbitrary (\ie non-deterministic) position in its evolution.
\end{lem}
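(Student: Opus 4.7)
\medskip
\noindent\textbf{Proof plan.}
The plan is to analyse how the local patterns in Figure~\ref{f:additionpatterns}, combined with the propagation-of-crosses rule, constrain the row-to-row evolution of the counter values on a given $\langle\xf,\zf\rangle$-coset, distinguishing the two cases by the presence or absence of a least significant bit.

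For the first statement, I would first observe that if the counter has a least significant bit, then the propagation rule governing pairs with $b_h\in\{0,1\}$ and $b_{\zf h}\in\{\bfz,\bfo\}$ forces $b_{\xf^2 h}\neq b_h$, so the LSB flips on every transition between consecutive counter rows. Next, I would inspect Figure~\ref{f:additionpatterns} to check that a forced flip at any bit propagates leftward in a uniquely determined way: a $1\to 0$ flip at a bit compels a flip at the bit immediately to its left (the carry continues), while a $0\to 1$ flip forces no flip there (the carry terminates). Iterating this across the (infinite-to-the-left) strip shows that every transition realises exactly the $2$-adic binary increment by $1$. A total overflow corresponds to the counter value equalling $-1$ as a $2$-adic integer; since incrementing by $1$ is a bijection of $\zz_2$ under which only $-1$ maps to $0$, the value $-1$ is attained at most once along the $\zz$-indexed sequence of counter rows, giving the at-most-one-overflow claim.

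For the second statement, without the LSB to anchor the flip, the same inspection of Figure~\ref{f:additionpatterns} shows that the identity transition (no bit flipped) is always locally admissible; on the other hand, any non-identity transition still has to realise a carry chain, and the absence of a starting point to the right forces the set of flipping positions to have the form $\{p\ge p_0\}$ for some $p_0\in\zz\cup\{-\infty\}$ (with $p$ labelling bit positions so that $p$ grows to the right), that is, an infinite tail of 1s turning into a tail of 0s, either terminating at a unique $0$ that flips to $1$ (partial overflow) or continuing throughout the entire counter (total overflow, which in the right-halfplane case also converts the MSB $\bfo$ into $\bfz$). The key observation is that a bounded interior run of $1\to 0$ flips is impossible: its rightmost flipping bit would require a carry originating at a position where no flip occurs, a combination absent from Figure~\ref{f:additionpatterns}. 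After any such flip event the counter state has an infinite tail of 0s, which rules out another non-identity transition either above or below; this yields the at-most-one-increase statement and leaves both the row of the flip and the stopping position $p_0$ genuinely non-deterministic.

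I expect the main obstacle to be the exhaustive verification, via the $16$ patterns of Figure~\ref{f:additionpatterns}, that the only non-identity local configurations arise as prefixes of a single carry chain; in particular, ruling out interior flips in the no-LSB case requires carefully tracking how the absence of a flip at $\zf h$ constrains possible flips at $h$, and vice versa. Once this local characterisation is in hand, the global conclusions in both statements reduce to standard facts about $2$-adic arithmetic and about tail behaviour of bi-infinite binary sequences.
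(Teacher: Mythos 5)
Your proposal is correct and follows essentially the same route as the paper's proof: the forced flip of the least significant bit plus the carry logic encoded in Figure~\ref{f:additionpatterns} gives deterministic incrementation in the first case, while in the second case any non-identity transition forces a right-infinite tail of $1\to0$ flips (all $1$s below, all $0$s above), which is exactly how the paper rules out more than one partial or total overflow. Your $2$-adic phrasing of the at-most-one-total-overflow claim and the explicit characterization of the flipping set as a tail $\{p\ge p_0\}$ are just slightly more formal packagings of the same argument.
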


\begin{proof}
The first statement is a direct consequence of the local rule forcing a binary counter's least significant bit to periodically change between $0$ and $1$, which then triggers the usual counting including periodic partial overflows on arbitrarily long finite suffixes.

For the second part we have to analyze the effect of a couple of local patterns possibly seen in $\omega\in\Omega$ on the infinite counter's coset $\langle\xf,\zf\rangle\,\yf^y$ ($y\in\zz$): Suppose there is an $\agC$-filled site $h\in\langle\xf,\zf\rangle\,\yf^y$ with $\omega(h)_1\in\{0,{\bfz}\}$ but $\omega(\xf^2 h)_1\in\{1,{\bfo}\}$, signalling an increasing digit at $h$. The locally admissible patterns shown in Figure~\ref{f:additionpatterns} then force $\omega(\zf^{-k}h)=\omega(\zf^{-k}\xf^2 h)$ (until $\zf^{1-k} h$ hits the infinite counter's possibly existing most significant bit) as well as $\omega(\zf^kh)=1$ and $\omega(\zf^k\xf^2h)=0$ for all $k\in\nz$. As a secondary effect the same local rules then imply $\omega(\zf^k\xf^{2+2l}h)=\omega(\zf^k\xf^2h)$ and $\omega(\zf^k\xf^{-2l}h)=\omega(\zf^kh)$ for all $l\in\nz$ and all $k\in\zz$ such that site $\zf^kh$ is part of the counter. Hence the counter is stuck on the same value on all $\zf$-rows $\langle\zf\rangle\,\xf^{2-2l}h$ below, experiences a single (infinite) partial overflow between $\zf$-rows $\langle\zf\rangle\,h$ and $\langle\zf\rangle\,\xf^2h$, before staying on its newly reached value for all $\zf$-rows $\langle\zf\rangle\,\xf^{2l}h$ above.

Let us assume next that the above does not happen. The counter might still contain a local pattern with $\omega(h)_1=1$ and $\omega(\xf^2h)_1=0$, \ie a decreasing bit, at some of its $\agC$-filled sites $h\in\langle\xf,\zf\rangle\,\yf^y$. This however is only possible if the counter experiences a total overflow, forcing all its digits in $\zf$-rows $\langle\zf\rangle\,\xf^{2-2l}h$ to be $1$ and all its digits in $\zf$-rows $\langle\zf\rangle\,\xf^{2l}h$ to be $0$ (for $l\in\nz$). Therefore the counter is at its highest value (all $1$ digits) in all $\zf$-rows below and stays on the value zero for all $\zf$-rows above this unique total overflow.

In the absence of both such local patterns we have $\omega(h)_1=\omega(\xf^2h)_1$ on all counter sites $h\in\langle\xf,\zf\rangle\,\yf^y$, \ie all digits stay unchanged forever, trivially concluding the argument.
\end{proof}

Once having set up those binary counters, the final task in our construction is to synchronize all counters having the same finite width $2^i$ ($i\in\nz$). This is done by controlling the $\langle\yf,\zf\rangle$-cosets in which those counters may overflow using superimposed diagonal lines to communicate such an event across a Robinson tiling's entire net of level-$i$ supertiles. After implementing all previous parts, there is a surprisingly simple solution to this, involving one last local rule: Suppose site $h\in\heis$ is filled with a counter-symbol $\omega(h)=(b_h,s_h)\in\agC$ having $b_h\in\{\bfz,\bfo\}$, thus marking a cross-propagating $\xf$-column. Consider its next-to-nearest neighbor $\omega(\xf^2 h)=(b_{\xf^2 h},s_{\xf^2 h})\in\agC$, which then necessarily has $b_{\xf^2 h}\in\{\bfz,\bfo\}$ as well, to see whether or not the counter experiences a total overflow. If $b_h=\bfo$ and $b_{\xf^2 h}=\bfz$ this is the case (the most significant bit decreases) and we require $s_h=\tilc$ to be the $\agD$-tile containing both the diagonal as well as the forward segment, whereas in the remaining cases -- either the combination $b_h=b_{\xf^2 h}=\bfo$ or $b_h=\bfz$ -- the symbol $s_h\in\agD$ is forced to be the blank tile not containing any line segments.

We point out that while this final rule does not impose any further restrictions on the binary digits $\{0,1\}$, it actually excludes five of the 8 $\agC$-symbols having a bold-face binary digit as its first component. Additionally we can remove from our initially defined alphabet the two $\agC$-symbols combining a $0$ in its first component with a tile including a forward line segment in its second component. This is due to the fact that whenever a counter is about to undergo a total overflow none of its binary digits is 0. With these observations the cardinality of the alphabet actually used in configurations of $(\Omega,\heis)$ decreases to $56+3+6=65$.\\[-1ex]

\subsection{Alignment of supertiles in the $\yf$ direction and counter synchronization}
We prove further rigidity results about supertiles and binary counters, effectively characterizing the $\zt$-projective subdynamics seen in our construction.

The next lemma shows that diagonal segments generated during a counter overflow enforce the width of all counters whose most significant bits are located on the same northwest-southeast pointing diagonal inside a $\langle\yf,\zf\rangle$-coset to coincide.

\begin{lem}\label{l:overflow}
Let $l\in\zz\,\setminus\{0\}$. Whenever a valid configuration has two sites $(x,(y,z)),$ $(x,(y-l,z+l))\in\heis$ both containing the $\agC$-symbol $(\bfo,\tilc)$, the corresponding $\zf$-rows $\langle\zf\rangle\,\yf^y\xf^x$ and $\langle\zf\rangle\,\yf^{y-l}\xf^x$ contain binary counters of equal width.
\end{lem}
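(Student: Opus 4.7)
The plan is to first translate the $\bfo$-conditions at $p_1$ and $p_2$ into a statement about two specific crosses in an adjacent Robinson coset, and then conclude by a row-column symmetry argument inside the Robinson tiling.

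First I would apply the propagation-of-crosses rule along the $\xf$-direction. Using the semidirect multiplication $\xf\cdot(x,(y',z'))=(x+1,(y',z'+y'))$, the bold-face $\bfo$ at $p_1=(x,(y,z))$ forces a Robinson cross at $\xf\cdot p_1=(x+1,(y,z+y))$, while the bold-face $\bfo$ at $p_2=(x,(y-l,z+l))$ forces a Robinson cross at $\xf\cdot p_2=(x+1,(y-l,z+l+(y-l)))=(x+1,(y-l,z+y))$. The crucial observation is that both crosses share the same $\zf$-coordinate $z+y$, placing them in the same $\yf$-column of the Robinson tiling at height $x+1$, vertically separated by $l$ in the $\yf$-direction.

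Second I would invoke the following row-column symmetry of Robinson tilings: at any cross $q=(y',z')$, the $\zf$-row spacing in row $y'$ equals the $\yf$-column spacing at $z'$, and both equal $2^{k+1}$, where $k$ is the largest index such that $q$ is the center of a level-$k$ supertile (taking $k=0$ when $q$ is only an alternating-cross corner). This is a consequence of the hierarchical supertile structure inherent to all valid Robinson tilings, reinforced in our setting by the $\zf$-alignment established in Lemma~\ref{l:alignment}: within any level-$k$ supertile containing $q$, the nearest crosses to $q$ along its row and along its column sit at sub-supertile centers at distance exactly $2^{k+1}$. Applying this equality to both $q_1=(y,z+y)$ and $q_2=(y-l,z+y)$, the column spacing at $z+y$ equals the row spacing in each of rows $y$ and $y-l$, so these two row spacings agree, and the binary counters in the two rows (whose widths equal the row spacings) have equal width.

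The main obstacle is handling Robinson tilings that are not a priori known to be $\yf$-aligned, since establishing $\yf$-alignment is precisely the broader goal of this subsection. I would address this by localizing the row-column symmetry argument to a sufficiently large level-$K$ supertile containing both $q_1$ and $q_2$; such a supertile exists whenever $K$ is chosen large relative to $|l|$, and inside it the cross arrangement is entirely deterministic and rigid, allowing the equality of row spacings at $y$ and $y-l$ to be read off directly from the hierarchical pattern.
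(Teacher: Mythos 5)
Your first step is correct and is a genuinely nice observation: since $\xf\cdot(x,(y,z))=(x+1,(y,z+y))$, the two bold-face digits on the $\langle\yf^{-1}\zf\rangle$-diagonal propagate to two crosses $(x+1,(y,z+y))$ and $(x+1,(y-l,z+y))$ sharing the same $\zf$-coordinate, which explains why the diagonal decoration is the right signal to use. The argument breaks down at the second step, however. The ``row-column symmetry'' you invoke is precisely the kind of alignment statement that this lemma exists to help establish: at this point in the development only $\zf$-alignment (Lemma~\ref{l:alignment}) is known, so the crosses along the $\yf$-row $\langle\yf\rangle\,\zf^{z+y}\xf^{x+1}$ need not even be periodic, and a ``column spacing at $z+y$'' is not well defined. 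In a $\zf$-aligned but $\yf$-misaligned Robinson tiling -- a configuration type not yet excluded, since excluding it is the content of Lemma~\ref{l:alignment2}, which is proved \emph{using} the present lemma -- two crosses with the same $\zf$-coordinate can perfectly well sit in $\zf$-rows of different periods. Your proposed repair, localizing to a common level-$K$ supertile, does not close the gap: two crosses at bounded distance need not lie in any common finite-level supertile (already in the fully aligned tiling of Lemma~\ref{l:preexistence} the four quadrants of the hierarchy never merge, and in a misaligned tiling the two relevant crosses belong to hierarchy branches that never merge exactly in the case one is trying to rule out). So the argument is circular.

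The structural symptom is that your proof never uses the hypothesis that both sites carry the full overflow symbol $(\bfo,\tilc)$ rather than merely a bold-face digit; bold-face alone already yields the two crosses. But the statement with ``bold-face digit'' in place of ``$(\bfo,\tilc)$'' is not provable from the Robinson geometry plus $\zf$-alignment, which is why the paper's proof is dynamical rather than geometric: assuming the width $2^i$ along $\langle\zf\rangle\,\yf^y\xf^x$ is strictly smaller than the width along $\langle\zf\rangle\,\yf^{y-l}\xf^x$, the forward and diagonal segments emanating from $(x,(y,z))$ force both families of counters to overflow on layer $\langle\yf,\zf\rangle\,\xf^x$; the width-$2^i$ counters then overflow again $2\cdot2^{2^i}$ cosets higher, and cross-propagation keeps their most significant bits on the same $\langle\yf^{-1}\zf\rangle$-diagonal as the (propagated) most significant bit of the wider counter, whose diagonal segment therefore forces a $\tilc$-decoration on a bold-face digit that is \emph{not} undergoing a total overflow there -- contradicting the local rule that a non-overflowing bold-face digit must carry the blank $\agD$-tile. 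You would need to incorporate this overflow-period argument (or some equivalent use of the counter dynamics) for the proof to go through.
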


\begin{proof}
Suppose the $\zf$-row $\langle\zf\rangle\,\yf^y\xf^x$ contains bold-face digits with finite period $2^i$ ($i\in\nz$) smaller than the (possibly infinite) width of the binary counters implemented along $\langle\zf\rangle\,\yf^{y-l}\xf^x$. Propagation of the line segments emanating from the $\agC$-symbol $(\bfo,\tilc)$ seen at site $(x,(y,z))$ would force all counters along $\langle\zf\rangle\,\yf^y\xf^x$ as well as the counter having its most significant bit at site $(x,(y-l,z+l))$ to overflow on layer $\langle\yf,\zf\rangle\,\xf^x$. Counters of width $2^i$ would then repeat overflowing $2\cdot 2^{2^i}$ cosets above, where cross-propagation has displaced all bold-face digits to sites
\[
\langle\zf^{2^i}\rangle\,\zf^{z+2\cdot 2^{2^i}\cdot y}\yf^y\xf^{x+2\cdot 2^{2^i}}=\langle\zf^{2^i}\rangle\,\zf^z\yf^y\xf^{x+2\cdot 2^{2^i}}\ .
\]
Similarly, the most significant bit located at $(x,(y-l,z+l))$ propagates along the $\xf$-direction to site $h:=\bigl(x+2\cdot 2^{2^i},\bigl(y-l,z+l+2\cdot 2^{2^i}(y-l)\bigr)\bigr)$ thus also containing a bold-face digit. Since one of the newly overflowing binary counters of width $2^i$ has its most significant bit stored at position
\[
\bigl(x+2\cdot 2^{2^i},\bigl(y,z+2\cdot 2^{2^i}(y-l)\bigr)\bigr)\in \langle\zf^{2^i}\rangle\,\zf^z\yf^y\xf^{x+2\cdot 2^{2^i}}\ ,
\]
the diagonal line necessarily emanating from there would extend to the most significant bit at $h$. However, due to its larger width the corresponding binary counter would not yet experience a total overflow on layer $\langle\yf,\zf\rangle\,\xf^{x+2\cdot 2^{2^i}}$, making it impossible to have a bold-face digit with a diagonal line segment there and contradicting our initial assumption.
\end{proof}

In order to prove synchronization of our binary counters we show another rigidity result about Robinson tilings seen in $\langle\yf,\zf\rangle$-cosets of configurations in $(\Omega,\heis)$:

\begin{lem}\label{l:alignment2}
In each $\langle\yf,\zf\rangle$-coset filled with symbols from the Robinson alphabet $\agR$, the additional local rules about binary counters force level-$i$ supertiles (for all $i\in\nz$) to be also fully aligned along the $\yf$-direction.
\end{lem}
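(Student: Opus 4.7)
The plan is to mirror the strategy of Lemma~\ref{l:alignment}, proceeding by induction on the level $i$ and deriving a contradiction from any failure of $\yf$-alignment by using Lemma~\ref{l:overflow} to compare widths of two counters in different rows. For $i = 1$ the alternating crosses constraint already forces $\yf$-alignment. For the inductive step, assume both $\zf$- and $\yf$-alignment hold at all levels $j < i$. Suppose for contradiction that level-$i$ supertiles in the Robinson coset $\langle\yf,\zf\rangle\,\xf^x$ are not aligned along $\yf$. Then this coset contains two level-$i$ centers $(x,(y_1,z_1))$ and $(x,(y_2,z_2))$ with $v_2(y_1 - y_2) = i - 1$ (by the inductive hypothesis) and $v_2(z_1 - z_2) \geq i$ (by Lemma~\ref{l:alignment}).

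Next I would consider the binary counter running in $\langle\xf,\zf\rangle\,\yf^{y_1}$ whose most-significant-bit column passes through $(x+1,(y_1,z_1))$. Its width $W_1$ equals the spacing of crosses in the row $\langle\zf\rangle\,\yf^{y_1}\xf^x$ of the underlying Robinson tiling, which is at least $2^{i+1}$ since $(y_1,z_1)$ is a level-$i$ center. Being of finite width, this counter undergoes total overflows at periodic $\xf$-layers, each placing the symbol $(\bfo,\tilc)$ at the corresponding MSB position and emitting a diagonal that propagates throughout the $\agC$-filled coset $\langle\yf,\zf\rangle\,\xf^{x+1}$ along the antidiagonal line $\{(x+1,(y_1-k,z_1+k)) : k \in \zz\}$ by the segments-have-to-continue rule.

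The key geometric step is to show that in row $y_2$ this diagonal passes through a bold-face cell corresponding to a counter of width strictly smaller than $W_1$. The diagonal's $\zf$-coordinate in row $y_2$ is $z^\star := z_1 + (y_1 - y_2)$. From $v_2(z_1 - z_2) \geq i$ and $v_2(y_1 - y_2) = i - 1$ one computes $z^\star - z_2 \equiv 2^{i-1} \pmod{2^i}$, placing $(y_2, z^\star)$ at a cross position of level exactly $i - 1$ in row $y_2$ (using the Robinson hierarchy together with the inductive alignment at level $i - 1$). The counter in row $y_2$ whose MSB column passes through $(x+1,(y_2,z^\star))$ therefore has width $W_2 \leq 2^i < W_1$. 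The arriving diagonal segment combined with the bold-face digit at $(x+1,(y_2,z^\star))$ forces the symbol there to be $(\bfo,\tilc)$, triggering a total overflow of this smaller-width counter. Applying Lemma~\ref{l:overflow} with $l = y_1 - y_2$ then yields $W_1 = W_2$, the desired contradiction.

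The hardest part will be the geometric verification that $z^\star$ genuinely sits on a cross-propagating $\xf$-column of level at most $i - 1$ in row $y_2$. This requires carefully unpacking the 2-adic structure of cross positions in a Robinson tiling that is a priori not globally aligned but for which the inductive hypothesis provides consistent alignment strictly below level $i$; one must ensure that the local Robinson frame near $(y_2, z_2)$ actually places a cross at $(y_2, z^\star)$ rather than an arm, which should follow from the fact that $(y_2, z^\star)$ has the same 2-adic profile as a level-$(i-1)$ center in the alignment frame determined by $(y_2, z_2)$. Once this position-counting is pinned down, the remainder of the argument follows cleanly from the segments-have-to-continue rule, the overflow rule, and Lemma~\ref{l:overflow}.
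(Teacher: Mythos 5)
There is a genuine gap at what you yourself identify as the key geometric step, and it cannot be repaired in the form you propose. Both $(y_1,z_1)$ and $(y_2,z_2)$ are centres of level-$i$ supertiles, and the centre row of a level-$i$ supertile contains exactly one cross, namely the centre itself (in the notation of \eqref{e:ci}, $(0,w)\in C_i$ forces $j=i$ and hence $w=0$). So in row $y_2$ the crosses -- and therefore the bold-face digits in the adjacent counter coset -- occur only at the level-$i$ centres, spaced exactly as in row $y_1$. Your computation places $z^\star$ halfway between two such centres modulo the alignment period, so $(y_2,z^\star)$ is an arm position: no most-significant-bit column passes through it, the arriving diagonal merely decorates a non-bold cell, and Lemma~\ref{l:overflow} is never triggered. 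Worse, even the genuine bold-face cells of row $y_2$ belong to counters of the \emph{same} width as those of row $y_1$, so a direct comparison of the two misaligned centre rows can never produce the width mismatch your contradiction requires.

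The paper's proof gets around exactly this obstruction by not comparing the two centre rows with each other. It first shows that the $\yf$-offset $o$ between adjacent strips persists (after the Heisenberg shear) across all $\agR$-filled cosets -- a step you also omit, and which is needed because the total overflow emitting the diagonal may occur many layers above the one where the misalignment was hypothesised. It then groups the aligned strip at row $y$ into level-$(i+1)$ supertiles on one side and observes that at a suitable $\zf$-column these supertiles contribute a \emph{contiguous} $\yf$-segment of length $2^{i+1}$ consisting entirely of crosses of varying levels, hence of bold-face digits whose counters have \emph{varying} widths. The diagonal emitted by the overflowing width-$2^{i+1}$ counter in the offset row must intersect this segment, the intersection must again be an overflowing most-significant bit, and Lemma~\ref{l:overflow} then forces the intersection to lie in the unique row of the segment whose counters have width $2^{i+1}$, namely row $y$ itself; this is what yields $o\in 2^{i+1}\zz$. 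Your base case is also unsupported: the alternating crosses constraint fixes only the $2\zz\times2\zz$ lattice of corner crosses and does not by itself exclude a brick-laid offset of the level-$1$ supertiles, which is precisely why the paper's argument is run uniformly for every level.
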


\begin{proof}
Let us start by showing that for every $i\in\nz$ (mis-)alignment of level-$i$ supertiles along $\yf$-rows is preserved across all $\agR$-filled $\langle\yf,\zf\rangle$-cosets. This follows directly from the propagation of crosses constraint: Due to Lemma~\ref{l:alignment}, the existence of a single level-$i$ supertile centered at coordinate $(x,(y,z))\in\heis$ forces the presence of level-$i$ supertiles centered on all sites $\langle\zf^{2^{i+1}}\rangle\,\zf^z\yf^y\xf^x$. Suppose that adjacent to this filled-in $\zf$-row strip of aligned supertiles there is another level-$i$ supertile centered at $(x,(y+2^{i+1},z+o))\in\heis$, which -- invoking Lemma~\ref{l:alignment} a second time -- guarantees the appearance of another infinite $\zf$-row strip of aligned level-$i$ supertiles centered on $\langle\zf^{2^{i+1}}\rangle\,\zf^{z+o}\yf^{y+2^{i+1}}\xf^x$. Here $o\in 2^{i+1}\zz$ corresponds to mutual alignment, whereas $o\notin 2^{i+1}\zz$ indicates a partial offset between these two $\zf$-row strips of supertiles. Moving up two $\langle\yf,\zf\rangle$-cosets, propagation of crosses then displaces the centers of those level-$i$ supertiles from sites
\[
\langle\zf^{2^{i+1}}\rangle\,\zf^z\yf^y\xf^x\,\cup\,\langle\zf^{2^{i+1}}\rangle\,\zf^{z+o}\yf^{y+2^{i+1}}\xf^x\subsetneq \langle\yf,\zf\rangle\,\xf^x
\]
to sites
\[
\langle\zf^{2^{i+1}}\rangle\,\zf^{z+2y}\yf^y\xf^{x+2}\,\cup\,\langle\zf^{2^{i+1}}\rangle\,\zf^{z+2y+2^{i+2}+o}\yf^{y+2^{i+1}}\xf^{x+2}\subsetneq \langle\yf,\zf\rangle\,\xf^{x+2}\ .
\]
Simplifying the $\zf$-component $\langle\zf^{2^{i+1}}\rangle\,\zf^{z+2y+2^{i+2}+o}=\langle\zf^{2^{i+1}}\rangle\,\zf^{z+2y+o}$ in the second set of coordinates establishes the preservation of the offset $o$ across consecutive $\agR$-filled $\langle\yf,\zf\rangle$-cosets as claimed.

Next we prove that the offset $o$ indeed has to be a multiple of $2^{i+1}$, \ie that arbitrary adjacent $\zf$-row strips of level-$i$ supertiles in a fixed $\langle\yf,\zf\rangle$-coset are in fact aligned along the $\yf$-direction. Presume as before that there is an entire $\zf$-row strip of aligned level-$i$ supertiles centered along $\langle\zf\rangle\,\yf^y\xf^x$. Each of those supertiles obviously constitutes one quadrant of some level-$(i+1)$ supertile, all of which -- following from Lemma~\ref{l:alignment} -- again have to align properly along the $\zf$-direction. Assuming without loss of generality that this infinite family of level-$(i+1)$ supertiles extends to the left having their central crosses placed at sites $\langle\zf^{2^{i+2}}\rangle\,\zf^{z-2^i}\yf^{y-2^i}\xf^x$, alignment between the two families of constituent level-$i$ supertiles centered at $\langle\zf^{2^{i+1}}\rangle\,\zf^z\yf^y\xf^x$ and $\langle\zf^{2^{i+1}}\rangle\,\zf^z\yf^{y-2^{i+1}}\xf^x$ is then forced by the ordinary $\zt$-Robinson rules. To show alignment on the opposite side, suppose there exists a level-$i$ supertile centered at coordinate $(x,(y+2^{i+1},z+o))\in\heis$, thus giving rise to an entire family of level-$i$ supertiles with their central crosses spaced $2^{i+1}$-periodically at sites $\langle\zf^{2^{i+1}}\rangle\,\zf^{z+o}\yf^{y+2^{i+1}}\xf^x$. Looking one $\langle\yf,\zf\rangle$-coset above, the $\zf$-row $\langle\zf\rangle\,\yf^{y+2^{i+1}}\xf^{x+1}$ is thus filled with segments of binary counters of width $2^{i+1}$ having their most significant bit stored at $\langle\zf^{2^{i+1}}\rangle\,\zf^{z+y+o}\yf^{y+2^{i+1}}\xf^{x+1}$. Using the result about the persistence of the offset $o$ across successive $\langle\yf,\zf\rangle$-cosets obtained at the beginning, we may assume without loss of generality that the binary counter whose most significant bit is located at
\[
\Bigl(x+1,\Bigl(y+2^{i+1},z+y+o-2^{i+1}\cdot\ceil{\frac{o}{2^{i+1}}}\Bigr)\Bigr)
\]
experiences a total overflow. The $\agC$-symbol seen at this location is thus a bold-face digit $\bfo$ decorated with both line segments. The forward segment then stretches across $\langle\zf\rangle\,\yf^{y+2^{i+1}}\xf^{x+1}$ making all other binary counters along its way overflow as well, whereas the diagonal segment extends along all sites
\[
\Bigl\{\Bigl(x+1,\Bigl(y+2^{i+1}-l,z+y+o-2^{i+1}\cdot\ceil{\frac{o}{2^{i+1}}}+l\Bigr)\Bigr)\,:\,l\in\zz\Bigr\}\subsetneq\heis\ .
\]
Recall from Section~\ref{s:Robinson} that the level-$(i+1)$ supertiles centered at $\langle\zf^{2^{i+2}}\rangle\,\zf^{z-2^i}\yf^{y-2^i}\xf^x$ have their crosses placed at
\[
\bigl\{x\bigr\}\times\bigl\{y-3\cdot2^i+2^j+2^{j+1}k\,:\,0\leq k<2^{i+1-j}\bigr\}\times\bigl(z-3\cdot2^i+2^j+2^{j+1}\zz\bigr)
\]
for all $j\in\{0,1,\ldots,i+1\}$.
The propagation of crosses constraint then forces bold-face binary digits at sites
\[
\bigl\{x+1\bigr\}\times\bigl\{y-3\cdot2^i+2^j+2^{j+1}k\,:\,0\leq k<2^{i+1-j}\bigr\}\times\bigl(z+y-3\cdot 2^{i+1}+2^{j+1}\zz\bigr)
\]
for all $j\in\{0,1,\ldots,i+1\}$. Those sites notably comprise the contiguous $\yf$-segment
\[
\bigl\{x+1\bigr\}\times\bigl\{y-k\,:\,0\leq k< 2^{i+1}\bigr\}\times\bigl\{z+y+2^{i+1}\bigr\}
\]
which is hit by the diagonal line forced by the binary counter undergoing a total overflow. The site of intersection
\[
\bigl(x+1,\bigl(y+o-2^{i+1}\cdot\ceil{\frac{o}{2^{i+1}}},z+y+2^{i+1}\bigr)\bigr)
\]
then has to be filled with another bold-face digit $\bfo$ decorated with both line segments. This implies that the corresponding binary counters along
\[
\langle\zf\rangle\,\yf^{y+o-2^{i+1}\cdot\ceil{\frac{o}{2^{i+1}}}}\xf^{x+1}
\]
are overflowing as well and according to Lemma~\ref{l:overflow} have to have the same width $2^{i+1}$ as the ones along $\langle\zf\rangle\,\yf^{y+2^{i+1}}\xf^{x+1}$. Hence $y+o-2^{i+1}\cdot\ceil{\frac{o}{2^{i+1}}}=y$, the unique coordinate of a $\zf$-row having the correct period of crosses, and we conclude $o\in 2^{i+1}\zz$.
\end{proof}

\begin{rem}\label{r:aligned}
Together Lemmata~\ref{l:alignment} and \ref{l:alignment2} exclude any misalignment between supertiles. The only Robinson tilings appearing in the $\agR$-filled $\langle\yf,\zf\rangle$-cosets in our $\heis$-SFT $\Omega$ thus have their level-$i$ supertiles repeated fully periodically along both the $\yf$- and the $\zf$-direction for each $i\in\nz$. This means that whenever a site $(x,(y,z))\in\heis$ contains the central cross of some level-$i$ supertile the centers of all (other) level-$i$ supertiles are located at
\[
\bigcup_{k\in\zz}\langle\yf^{2^{i+1}},\zf^{2^{i+1}}\rangle\,\zf^{z+2k\cdot y}\yf^{y}\xf^{x+2k}\ .
\]
\end{rem}

 \begin{rem}\label{r:minimal}
 We can now describe the Robinson tilings that appear in the $\mathbb Z^2$ projective subdynamics of this SFT restricted to the subgroup $\langle y,z\rangle$.  
Any pattern seen in a fully aligned Robinson tiling, as described in Remark~\ref{r:aligned}, is already a subpattern of some level-$i$ supertile for sufficiently large $i\in\nz$. Hence it occurs inside every Robinson tiling and the family of tilings seen in $\langle\yf,\zf\rangle$-cosets, as discussed in Section~\ref{s:Robinson},  comprises exactly the unique minimal subsystem contained inside the $\zt$-Robinson SFT. 
\end{rem}

\begin{cor}\label{c:sync}
Given a valid configuration in $(\Omega,\heis)$, all binary counters having the same (finite) width undergo a total overflow on the same $\langle\yf,\zf\rangle$-cosets.
\end{cor}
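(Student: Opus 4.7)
The plan is to show that if a width-$2^i$ counter $C_1$ undergoes a total overflow on some coset $\langle\yf,\zf\rangle\,\xf^{x_0}$, then every other width-$2^i$ counter $C_2$ also overflows on that same coset. Since any finite-width counter evolves periodically along $\xf$ with period $2^{2^i+1}$, agreement on a single overflow coset forces the entire overflow-coset sets to coincide.

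First I would unpack what $C_1$'s total overflow imposes locally: its MSB site $(x_0,(y_1,z_1))$ carries $(\bfo,\tilc)$, so the forward segment propagates along the $\zf$-row $\langle\zf\rangle\,\yf^{y_1}\xf^{x_0}$ and the diagonal segment propagates through all sites $(x_0,(y_1-l,z_1+l))$, $l\in\zz$, within the same $\langle\yf,\zf\rangle$-coset. Wherever a line segment arrives, the ``segments have to continue'' rule forces the $\agD$-component of the $\agC$-symbol there to carry that segment; at a bold-face digit site the final construction rule permits only $\tilb$ or $\tilc$, so any incoming segment pins the tile to $\tilc$ and thereby forces $b_h=\bfo$, $b_{\xf^2 h}=\bfz$, i.e.\ a total overflow at that MSB. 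The task thus reduces to showing that $C_2$'s MSB inside coset $\langle\yf,\zf\rangle\,\xf^{x_0}$ is actually hit by either the forward or the diagonal segment emanating from $C_1$; if $C_2$ lies in $C_1$'s $\zf$-row this is immediate from the forward segment.

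The main geometric step concerns different rows $y_2\ne y_1$, and for it I would invoke Remark~\ref{r:aligned} together with Remark~\ref{r:minimal}: in any fully-aligned Robinson tiling the cross set takes, in suitable coordinates, the shape $\{(y,z)\in\zt:v_2(y-y_0)=v_2(z-z_0)\}$ for some reference $(y_0,z_0)$, as one sees already in the canonical tiling from Lemma~\ref{l:preexistence} and as the hierarchical placement of level-$i$ centers at $(y_{i+1}\pm 2^i,z_{i+1}\pm 2^i)$ inside level-$(i+1)$ supertiles extends to every aligned tiling by induction. Consequently every $\zf$-row hosting width-$2^i$ counters sits at $y\equiv y_0+2^{i-1}\pmod{2^i}$ and has its MSBs at $z\equiv z_0+2^{i-1}\pmod{2^i}$, with the $\zf$-offset independent of which such row is chosen. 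Hence $y_1-y_2\equiv 0\equiv z_1-z_2\pmod{2^i}$, so the diagonal from $(x_0,(y_1,z_1))$ meets row $y_2$ at $z=z_1+(y_1-y_2)\equiv z_2\pmod{2^i}$, precisely at an MSB of $C_2$. The hard part is exactly this row-independence of the $\zf$-offset; Lemma~\ref{l:overflow} offers a consistency check on the intersection point (the two counters connected by a common diagonal are forced to have matching width), and once the single-coset synchronization is in hand, periodicity of width-$2^i$ counters in $\xf$ lifts it to an equality of overflow-coset sets across all of $\heis$.
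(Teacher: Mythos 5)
Your proposal is correct and takes essentially the same route as the paper's proof: both rely on Remark~\ref{r:aligned} to place the most significant bits of all width-$2^i$ counters in a single coset of $2^i\zz\times 2^i\zz$ inside each $\agC$-filled $\langle\yf,\zf\rangle$-layer, then let the diagonal segment of the overflowing $(\bfo,\tilc)$ hit one such MSB in every width-$2^i$ row, force $\tilc$ there (since bold-face sites only admit $\tilb$ or $\tilc$), and finally use the forward segments and the $2\cdot 2^{2^i}$-periodicity in $\xf$ to spread the overflow to all counters of that width. The only slip is cosmetic: the diagonal lands on the MSB of \emph{some} width-$2^i$ counter in row $y_2$, not necessarily on $C_2$'s own, so you still need the forward segment triggered there to carry the overflow along that $\zf$-row to $C_2$ --- a mechanism you have already set up for the same-row case.
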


\begin{proof}
Note that for $i\in\nz$ the binary counters of width $2^i$ have their most significant bit stored along the $\xf$-columns propagating the central crosses of level-$(i-1)$ supertiles. (Here we explicitly include the elements in the $2\zz\times2\zz$ coset filled by the alternating crosses constraint as (centers of) level-$0$ supertiles.) According to Remark~\ref{r:aligned} those crosses appear exactly at $\bigcup_{k\in\zz}\langle\yf^{2^i},\zf^{2^i}\rangle\,\zf^{z+2k\cdot y}\yf^{y}\xf^{x+2k}$ for some $(x,(y,z))\in\heis$. Consequently the most significant bits of width $2^i$ counters in a fixed but arbitrary $\agC$-filled coset $\langle\yf,\zf\rangle\,\xf^{x+2k+1}$ ($k\in\zz$) are stored at sites $\langle\yf^{2^i},\zf^{2^i}\rangle\,\zf^{z+(2k+1)\cdot y}\yf^{y}\xf^{x+2k+1}$ forming a doubly-periodic lattice. The total overflow of one of those binary counters -- indicated by the presence of a bold-face digit $\bfo$ accompanied by the $\agD$-tile $\tilc$ with both line segments -- triggers the total overflow of all binary counters along its own $\zf$-row (propagation of the forward segment) as well as along its $\langle\yf^{-1}\zf\rangle$-diagonal (propagation of the diagonal segment) hitting and thus enforcing a total overflow of all binary counters along the remaining $\zf$-rows (propagation of corresponding forward segments). This collective overflow then repeats periodically on every $2\cdot 2^{2^i}$\!th $\langle\yf,\zf\rangle$-coset above and below. 
\end{proof}

\subsection{A strongly aperiodic shift of finite type}
The following two propositions let us conclude that the example we have constructed is indeed a strongly aperiodic shift of finite type on the discrete Heisenberg group.

\begin{prop}\label{p:existence}
The $\heis$-SFT $\Omega$ constructed above is non-empty.
\end{prop}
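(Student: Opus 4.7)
The plan is to explicitly build a configuration $\omega\in\ag^\heis$ satisfying every local rule laid out above. As a starting point I use the partial configuration from Lemma~\ref{l:preexistence}: place the fully aligned Robinson tiling $\rho$ (with crosses at the set $C$) on every even $\langle\yf,\zf\rangle$-coset by setting $\omega((2k,(y,z))):=\rho(y,z)$ for all $k\in\zz$ and $(y,z)\in\zt$. As already verified in the proof of Lemma~\ref{l:preexistence}, this choice is consistent with cross propagation, and it pins down the locations of all bold-face MSBs on each odd coset $\xf^{2k+1}$ to be exactly $\{(2k+1,(y,z+y)):(y,z)\in C\}$. In the $\zf$-row whose second coordinate $y$ has $2$-adic valuation $i-1\geq 0$, these MSBs occur $2^i$-periodically and delimit counter strips of width $2^i$; the exceptional row $y=0$ contains only the single bold-face site $(2k+1,(0,0))$, yielding two infinite-width counters in its two halfplanes.

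What remains is to assign $\agC$-symbols to every non-MSB site so that the addition rules from Figure~\ref{f:additionpatterns}, the overflow decoration rule and the segment-continuation rules are all simultaneously obeyed. My prescription: on coset $\xf^1$ initialize every finite-width counter to the value $0$; let the left halfplane infinite-width counter in row $y=0$ (which has an LSB) evolve deterministically from $0$ on $\xf^1$ so that it never undergoes a total overflow; and set the right halfplane infinite-width counter in row $y=0$ (which has no LSB) to be identically $0$ throughout, which is permitted by Lemma~\ref{l:infcount}. Using the deterministic addition rules to propagate the counter state to every other odd coset $\xf^{2k+1}$, each width-$2^i$ counter on coset $\xf^{2k+1}$ then carries the integer $k\pmod{2^{2^i}}$; in particular all width-$2^i$ counters on a fixed odd coset hold the same value and overflow on the same cosets. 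I finally decorate every MSB with $\tilc$ iff it exhibits a total overflow (i.e.\ $b_h=\bfo,\,b_{\xf^2h}=\bfz$) and with $\tilb$ otherwise, and on each non-MSB site I pick the $\agD$-tile with the diagonal segment iff that site lies on the NW-SE line through some MSB overflowing on the same coset, with the forward segment iff it lies in a $\zf$-row of a counter currently overflowing, and the blank tile in any other case. By design this satisfies the overflow decoration rule, the LSB-toggling rule, and the addition rules of Figure~\ref{f:additionpatterns}.

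The only nontrivial verification is self-consistency of the segment-continuation rules, for which the key fact is: starting from a width-$2^i$ bold-face site at $(y_0,z_0')$ with $y_0\in 2^{i-1}+2^i\zz$ and $z_0'\in 2^i\zz$, the NW-SE diagonal $(y_0-l,z_0'+l)$ meets another bold-face position only when $l\in 2^i\zz$, and then that target is again a width-$2^i$ MSB. This follows from a short $2$-adic valuation argument, examining the three cases $j<i$, $j=i$, $j>i$ against the constraints $v_2(y_0-l)=j-1$ and $v_2(z_0'+l)\geq j$ that characterize a width-$2^j$ bold-face site; only $j=i$ with $l\in 2^i\zz$ survives, and the exceptional MSB at $(0,0)$ is missed because $y_0+z_0'\in 2^{i-1}+2^i\zz$ is nonzero. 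Since all width-$2^i$ counters are synchronized, every bold-face site along such a diagonal is overflowing precisely when the starting one is, so the $\tilc$ decorations match consistently and the diagonal passes through intermediate non-MSB sites unobstructed. Forward segments remain inside a single $\zf$-row and connect synchronized width-$2^i$ MSBs, so they are analogously consistent. The main obstacle -- ruling out that diagonals triggered by overflows of different widths collide at an MSB of yet another width -- is resolved exactly by this parity argument. Collecting all the consistent local choices above assembles the desired configuration $\omega\in\Omega$, whence $\Omega\neq\emptyset$.
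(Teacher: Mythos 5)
Your proposal is correct and follows essentially the same route as the paper's own proof: place the aligned Robinson tiling $\rho$ from Lemma~\ref{l:preexistence} on the even $\langle\yf,\zf\rangle$-cosets, run every finite-width counter so that it holds the value $k\bmod 2^{2^i}$ on layer $\langle\yf,\zf\rangle\,\xf^{2k+1}$, and decorate with exactly the line segments forced by the resulting synchronized overflows. The only (immaterial) differences are your choice to keep the LSB-less infinite-width counter constant rather than letting it totally overflow on layer $\xf^{-1}$, and that you make explicit, via the $2$-adic valuation argument, the non-collision of diagonals with MSBs of other widths, which the paper encodes implicitly in its closed-form description of the segment-carrying sites.
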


\begin{proof}
Non-emptiness of $\Omega$ basically follows from Lemma~\ref{l:preexistence} plus the fact that counter-cosets can be filled in without violating any of the later rules. For an explicit configuration $\omega\in\Omega$ take the particular $\zt$-Robinson tiling $\rho\in(\omR,\zt)$ used in that lemma and as before let $\omega\bigl((x,(y,z))\bigr):=\rho(y,z)$ for all $x\in 2\zz$.

The crosses appearing in $\rho$ and thus in $\omega$ still enforce the presence of bold-face $\agB$-digits precisely at all sites $(x,(y,z))\in\heis$ with
\[
x\in 2\zz+1\ \text{ and }\ (y,z)\in B=\bigl\{(0,0)\bigr\}\cup\bigcup_{i\in\nz}\bigl((2^{i-1}+2^i\zz)\times 2^i\zz\bigr)\ .
\]
Note that this set comprises the entire $\yf$-row $\langle\yf\rangle\,\xf^{-1}$ and define
\[
\omega\bigl((-1,(y,z))\bigr):=\begin{cases}\bigl({\bfo},\tilc\,\bigr) \text{ iff $(y,z)\in B$}\\
\bigl(1,\tilc\,\bigr) \text{ iff $(y,z)\notin B$}\end{cases}
\]
(the forward and diagonal lines emanating from $\langle\yf\rangle\,\xf^{-1}$ cover the entire $\langle\yf,\zf\rangle$-coset). Hence $\omega$ has all its binary counters -- independent of their width -- overflowing simultanously on layer $\langle\yf,\zf\rangle\,\xf^{-1}$, thus resulting in
\[
\omega\bigl((1,(y,z))\bigr)=\begin{cases}\bigl({\bfz},\tilb\,\bigr) \text{ iff $(y,z)\in B$}\\
\bigl(0,\tilb\,\bigr) \text{ iff $(y,z)\notin B$}\end{cases}
\]
(all blank tiles forced as no counters overflow). Deterministic evolution of the finite as well as infinite binary counters on the other $\agC$-cosets then dictates all remaining symbols: Given $k\in\zz$ and $i\in\nz$, the value of all counters of width $2^i$ on layer $\langle\yf,\zf\rangle\,\xf^{2k+1}$ is $k\!\mod 2^{2^i}$. (The value of an infinite width counter on layer $\langle\yf,\zf\rangle\,\xf^{2k+1}$ is $k$, applying the usual convention of representing negative numbers in base $2$ by an infinite prefix of $1$s.) For $k+1\equiv 0\!\mod 2^{2^i}$, all counters of width $2^j$ with $j\leq i$ overflow, triggering the occurrence of $\agD$-tiles with a forward segment at all sites $(2k+1,(y,z))\in\heis$ with $(y,z)\in \bigcup_{j\leq i}\bigl((2^{j-1}+2^j\zz)\times\zz\bigr)$ and a diagonal segment at all sites with $(y,z)\in \bigcup_{j\leq i}\{(2^{j-1}+2^j\cdot m-l,l)\,:\,l,m\in\zz\}$, hence resulting in the presence of $\agD$-tiles with both line segments at all coordinates in the intersection, \ie with $(y,z)\in \bigcup_{j\leq i}\bigl((2^{j-1}+2^j\zz)\times 2^j\zz\bigr)$ and blank tiles at all remaining coordinates (\ie in the complement of the union of both sets).
\end{proof}

\begin{prop}\label{p:aperiodicity}
The $\heis$-SFT $\Omega$ constructed above is strongly aperiodic.
\end{prop}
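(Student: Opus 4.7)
The plan is to assume $\omega \in \Omega$ admits a nontrivial stabilizer element $g \in \stab_\heis(\omega)$ and derive $g = 1_\heis$ by peeling off one coordinate at a time. Writing $g = (a,(y,z))$, the first observation exploits the alternating $\langle\yf,\zf\rangle$-coset structure: Robinson-filled cosets and counter-filled cosets strictly alternate along the $\xf$-direction, and since $\agR$ and $\agC$ are disjoint, right-multiplication by $g$ can preserve $\omega$ only if it preserves the $\xf$-parity of cosets. Hence $a = 2n$ for some $n \in \zz$.

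Next I would use the binary counters to force $n = 0$. By Corollary~\ref{c:sync}, for each $i \in \nz$ all binary counters of width $2^i$ undergo a total overflow on the same $\langle\yf,\zf\rangle$-cosets, and that set of overflow cosets forms an arithmetic progression with common difference $2 \cdot 2^{2^i}$ in the $\xf$-direction (the factor $2$ accounts for the alternation of Robinson and counter layers, while $2^{2^i}$ is the number of values a width-$2^i$ counter cycles through). Since the Robinson hierarchy forces supertiles of every level in each valid tiling, counters of every width $2^i$ must appear in $\omega$. The $g$-invariance of $\omega$ then requires $2 \cdot 2^{2^i}$ to divide $2n$ for every $i \in \nz$, i.e., $2^{2^i}$ divides $n$ for all $i$; this is possible only if $n = 0$.

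With $n = 0$, so $g = (0,(y,z))$, a direct computation gives $hg = (x,(b+y,\,c+z+xy))$ for $h = (x,(b,c))$, meaning that $g$ acts on each $\langle\yf,\zf\rangle$-coset $\langle\yf,\zf\rangle\,\xf^x$ as the $\zt$-translation by $(y,\,z+xy)$. On every Robinson-filled coset (those with $x$ even) the restriction of $\omega$ is a valid $\zt$-Robinson tiling, and Robinson's classical strong aperiodicity then forces $(y,\,z+xy) = (0,0)$ for each $x \in 2\zz$. Taking $x = 0$ yields $y = 0$, and then any other value of $x$ yields $z = 0$. Therefore $g = 1_\heis$, contradicting the choice of $g$; hence $\stab_\heis(\omega) = \{1_\heis\}$ for every $\omega \in \Omega$.

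The main obstacle I anticipate is the second step. Although it uses only the synchronization Corollary~\ref{c:sync} together with the unboundedness of counter widths, one has to be careful about the (at most one) exceptional $\langle\xf,\zf\rangle$-coset carrying infinite-width counters described in Remark~\ref{r:infcount}, since such counters do not overflow in the regular periodic fashion. The argument sidesteps this subtlety by working exclusively with finite-width counters, whose overflow cosets form genuine arithmetic progressions and whose common differences $2 \cdot 2^{2^i}$ grow without bound, making the divisibility condition on $n$ impossible to satisfy unless $n = 0$.
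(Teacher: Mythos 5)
Your proof is correct, but it takes a genuinely different route from the paper's. The paper never invokes the classical strong aperiodicity of the $\zt$-Robinson SFT: it fixes a level-$i$ supertile with $2^i>\max\{|a|,|b|,|c|\}$, uses Lemma~\ref{l:alignment} to single out the unique $\zf$-row in the resulting supertile strip whose crosses have period $2^{i+1}$, and deduces $b=0$ from the fact that the shifted row must be that same row; it then gets $a=0$ by comparing counter values within one $\langle\xf,\zf\rangle$-coset (using $|a|\ll 2^{2^{i+1}}$, the counter period), and finally $c=0$ from the cross positions. You instead kill the $\xf$-component first: the parity of the alternating $\agR$/$\agC$ cosets gives $a\in2\zz$, and the synchronized total-overflow cosets of the width-$2^i$ counters (Corollary~\ref{c:sync}), which form arithmetic progressions of unbounded common difference $2\cdot2^{2^i}$ and must be preserved by the shift, give $a=0$; you then reduce the remaining coordinates to a $\zt$-translation acting on a single Robinson-filled coset and quote the classical aperiodicity of Robinson tilings. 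Your route is shorter and makes explicit use of Corollary~\ref{c:sync}, at the price of importing the $\zt$ result as a black box (which the paper treats as known background, so this is legitimate); the paper's argument is self-contained modulo its own lemmas and extracts all three coordinates directly from the supertile and counter geometry. Two minor points: once $a=0$, a single Robinson-filled coset already yields $(y,z+xy)=(0,0)$, hence $y=0$ and $z=0$ simultaneously, so your second evaluation at ``any other value of $x$'' is redundant; and ``Robinson-filled cosets are those with $x$ even'' is only a normalization, since the rules merely force the two coset types to alternate.
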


\begin{proof}
Let $\omega\in\Omega$ be an arbitrary configuration and let $a,b,c\in\zz$ be integers for which $p:=(a,(b,c))\in\stab_\heis(\omega)$. The periodicity $\sigma^p(\omega)=\omega$ then implies that whenever $\omega$ sees a particular symbol at site $h\in\heis$, the same symbol has to appear at site $h\cdot p$. Select $i\in\nz$ sufficiently large to satisfy $2^i>\max\{\abs{a},\abs{b},\abs{c}\}$ and take $x,y,z\in\zz$ such that $\omega\bigl((x,(y,z))\bigr)\in\ag$ sees the central cross of some level-$i$ supertile. Lemma~\ref{l:alignment} guarantees the existence of infinitely many aligned level-$i$ supertiles centered on all sites $\langle\zf^{2^{i+1}}\rangle\,\zf^z\yf^y\xf^x$ thus forcing crosses on the $\zf$-row $\langle\zf\rangle\,\yf^y\xf^x$ to be spaced periodically with distance $2^{i+1}$. Note that all other $\zf$-rows $\langle\zf\rangle\,\yf^{y+k}\xf^x$ with $-2^i<k<2^i$, $k\neq0$ forming part of this family of supertiles contain crosses at a strictly smaller period. Applying a shift by the stabilizer element $p$ forces crosses along $\langle\zf\rangle\,\yf^y\xf^x\cdot p=\langle\zf\rangle\,\yf^{y+b}\xf^{x+a}$ to occur with the same period, but at sites $\langle\zf^{2^{i+1}}\rangle\,\zf^{z+c+bx}\yf^{y+b}\xf^{x+a}$. Now propagation of crosses along the $\xf$-direction allows us to deduce what happens along $\zf$-row $\langle\zf\rangle\,\yf^{y+b}\xf^x$: there are crosses exactly at sites
\[
\langle\zf^{2^{i+1}}\rangle\,\zf^{z+c+bx-a(y+b)}\yf^{y+b}\xf^x\ .
\]
Due to the bound $\abs{b}<2^i$ this $\zf$-row is still included in the strip occupied by the infinite family of aligned level-$i$ supertiles forced by the central cross at $(x,(y,z))$, all of whose $\zf$-rows except $\langle\zf\rangle\,\yf^y\xf^x$ contain crosses at smaller periods. Hence we conclude $b=0$, which locates both $\zf$-rows $\langle\zf\rangle\,\yf^y\xf^x$ and $\langle\zf\rangle\,\yf^y\xf^x\cdot p$ within the same $\langle\xf,\zf\rangle$-coset. Consequently the value of the synchronized binary counters of width $2^{i+1}$ seen along $\langle\zf\rangle\,\yf^y\xf^{x+1}$ would have to coincide with the value seen in the width $2^{i+1}$ binary counters along $\langle\zf\rangle\,\yf^y\xf^{x+1}\cdot p=\langle\zf\rangle\,\yf^y\xf^{x+a+1}$. Since $\abs{a}<2^i\ll 2^{2^{i+1}}$ (the counter's period) this only happens for $a=0$, which leaves us with the occurrence of shifted crosses at sites
\[
\langle\zf^{2^{i+1}}\rangle\,\zf^z\yf^y\xf^x\cdot p=\langle\zf^{2^{i+1}}\rangle\,\zf^{z+c}\yf^y\xf^x\ .
\]
Comparing those to the position of the unshifted crosses at $\langle\zf^{2^{i+1}}\rangle\,\zf^z\yf^y\xf^x$, the bound $\abs{c}<2^i$ implies $c=0$ as the only option. Hence $\stab_\heis(\omega)=\bigl\{(0,(0,0))\bigr\}$ is trivial and since $\omega$ was arbitrary $(\Omega,\heis)$ is indeed strongly aperiodic.
\end{proof}

\section{Overflow coordination -- constructing the minimal, strongly aperiodic factor}\label{s:minimal}

Since we explicitly control both alphabet and local rules of our strongly aperiodic $\heis$-SFT, we might wonder, whether our construction already forces minimality of $\Omega$. While this is not the case, in this section we will modify $\Omega$ to obtain a new strongly aperiodic SFT that is an at most two-to-one extension of a minimal strongly aperiodic subshift. 
 
Minimality fails for the SFT $\Omega$ for two distinct reasons, one of which is not too hard to circumvent.
Thus far we do not control the offsets between $\langle\yf,\zf\rangle$-cosets on which binary counters of different width experience a total overflow. For example our SFT $\Omega$ contains two configurations $\omega,\,\omega'\in\Omega$, both having counter symbols on all sites in $\langle\xf^2,\yf,\zf\rangle\,\xf$, but such that in $\omega$ counters of all widths overflow simultaneously on the coset $\langle\yf,\zf\rangle\,\xf^{-1}$ -- this is the case for the particular configuration constructed in Proposition~\ref{p:existence} -- whereas in $\omega'$ only counters of width at least $2^2=4$ overflow on $\langle\yf,\zf\rangle\,\xf^{-1}$, while all counters of width $2^1=2$ periodically overflow on the cosets $\langle\yf,\zf\rangle\,\xf^{1+8\zz}$. It is not hard to see that the (disjoint) $\heis$-orbits of configurations $\omega$ and $\omega'$ are separated by a positive distance\footnote{Patterns of support $\bigl\{(x,(y,z))\,:\,x\in\{0,1\}\,\wedge\,y,z\in\{0,1,\ldots,7\}\bigr\}\subsetneq\heis$ occuring somewhere in $\omega$ are disjoint from patterns of the same support appearing in $\omega'$.}. Hence neither of them is dense in our present construction, contradicting minimality of $\Omega$.

In order to fix this issue of distinct-width overflow non-coordination we slightly enlarge our alphabet $\agC$ and specify a couple of additional local rules forcing overflows on larger width counters to induce simultaneous overflows on counters of all smaller widths. To do so, let us introduce a single new square tile $\tila$, replacing some of the occurrences of $\tilc$, while keeping the tile $\tilc$ unchanged in other places.

Hence from here on we consider
\[
\agDtil:=\bigl\{\tilb,\tild,\tilf,\tilc,\tila\bigr\}\ ,
\]
forming $\agCtil:=\agB\times\agDtil$ just as before. A first supplementary constraint nevertheless forces the new {\em overflow-coordination tile} $\tila$ to be admissible as second component in a symbol of $\agCtil$ only if the $\agB$-digit in its first component is a bold-face $\bfo$. Thus effectively adding only one symbol to $\agC$, the total of occurring symbols (a proper subset of $\agCtil$) increases from $9$ to $10$ fixing the cardinality of the alphabet actually used in our modified $\heis$-SFT at $66$.

Next construct $\omT\subsetneq\bigl(\agR\sqcup\agCtil\bigr)^\heis$ by literally applying all constraints imposed on our previously defined $\heis$-SFT $\Omega\subsetneq\bigl(\agR\sqcup\agC\bigr)^\heis$, except for adding the possibility of having a symbol $\bigl({\bfo},\tila\,\bigr)$ whenever we had a symbol $\bigl({\bfo},\tilc\,\bigr)$ before. This increased flexibility, apparently veering us even further away from a minimal system, is then reduced by invoking the following pair of local rules. For every $\omega\in\omT$ and every site $h\in\heis$ with $\omega(h)\in\agCtil$ we enforce:
\begin{enumerate}[label=(OC\arabic*)]
\item If either $\omega(h)_1=0$ or $\omega(h)_1=\omega(\xf^2h)_1=1$, \ie there is no overflow on the corresponding binary counter's digit, having $\omega(\yf h)=\bigl({\bfo},\tilc\,\bigr)$ forces $\omega(\yf^{-1}h)=\bigl({\bfo},\tilc\,\bigr)$ and symmetrically having $\omega(\yf^{-1}h)=\bigl({\bfo},\tilc\,\bigr)$ forces $\omega(\yf h)=\bigl({\bfo},\tilc\,\bigr)$.\label{frule1}
\item If either $\omega(h)_1=1$ and $\omega(\xf^2h)_1=0$, \ie there is an overflow on the corresponding binary counter's digit, or $\omega(h)=\bigl({\bfo},\tila\,\bigr)$, \ie the entire counter overflows, the presence of a bold-face binary digit in the $\agCtil$-symbol at site $\yf h$ forces $\omega(\yf h)=\bigl({\bfo},\tila\,\bigr)$, while the presence of a bold-face binary digit in the $\agCtil$-symbol at $\yf^{-1} h$ forces $\omega(\yf^{-1} h)=\bigl({\bfo},\tila\,\bigr)$.\label{frule2}
\end{enumerate}

The global effect of those local constraints is that each overflow occurring on certain digits of a binary counter with a larger width triggers a total overflow of each smaller width counter whose most significant bit is aligned along the $\yf$-direction with one of those overflowing digits. The horizontal line segment seen in the newly introduced square tile $\tila$ visually propagates the necessary information about such an event uninterruptedly across larger distances in the $\yf$-direction. Figure~\ref{f:counterovercoord} illustrates this {\em counter overflow coordination} in two distinct $\langle\yf,\zf\rangle$-coset patterns occurring in configurations in $\omT$.

\begin{figure}[htb]
\vspace*{1ex}
\begin{tikzpicture}[scale=0.9]
\clip (-0.2,0.4) rectangle + (13.8,9.65);

\fill[color={black!10}] (0.7,10-0.8) rectangle (1.1,10-5.7);
\fill[color={black!10}] (0.7,10-8.8) rectangle (1.1,10-10.2);
\fill[color={black!10}] (8.7,10-0.8) rectangle (9.1,10-3.2);
\fill[color={black!10}] (8.7,10-8.8) rectangle (9.1,10-10.2);
\fill[color={black!10}] (10.5,10-0.8) rectangle (10.9,10-3.2);
\fill[color={black!10}] (10.5,10-4.8) rectangle (10.9,10-7.2);
\fill[color={black!10}] (10.5,10-8.8) rectangle (10.9,10-10.2);

\foreach \z in {0,8}
{
 \foreach \x in {1,3,5,7,9,11}
 {
  \foreach \y in {3,5,7,11,13,15,19}
  {
   \node at (\z+0.45*\x,10-0.5*\y) {$\tila$};
  }
 }
 \foreach \x in {0,4,8,12}
 {
  \foreach \y in {5,13}
  {
   \node at (\z+0.45*\x,10-0.5*\y) {$\tila$};
  }
 }
}

\foreach \x in {0,1,3,4,5,7,8,9,11,12}
{
 \node at (0.45*\x,10-0.5*1) {$\tilc$};
 \node at (0.45*\x,10-0.5*9) {$\tila$};
 \node at (0.45*\x,10-0.5*17) {$\tilc$};
 \node at (8+0.45*\x,10-0.5*1) {$\tilc$};
 \node at (8+0.45*\x,10-0.5*9) {$\tilc$};
 \node at (8+0.45*\x,10-0.5*17) {$\tilc$};
}

\node at (0.45*2,10-0.5*1) {$\tilb$};
\node at (0.45*2,10-0.5*17) {$\tilb$};
\node at (0.45*6,10-0.5*1) {$\tilc$};
\node at (0.45*6,10-0.5*9) {$\tila$};
\node at (0.45*6,10-0.5*17) {$\tilc$};

\node at (8+0.45*2,10-0.5*1) {$\tilb$};
\node at (8+0.45*2,10-0.5*17) {$\tilb$};
\node at (8+0.45*6,10-0.5*1) {$\tilb$};
\node at (8+0.45*6,10-0.5*9) {$\tilb$};
\node at (8+0.45*6,10-0.5*17) {$\tilb$};

\foreach \z in {0,8}
{
 \foreach \x in {1,3,5,7,9,11}
 {
  \foreach \y in {1,3,5,7,9,11,13,15,17,19}
  {
   \node[color=cmm] (\z\x\y) at (\z+0.45*\x,10-0.5*\y) {$\bfo$};
  }
  \foreach \y in {0,2,4,6,8,10,12,14,16,18,20}
  {
   \node[color=cmm] (\z\x\y) at (\z+0.45*\x,10-0.5*\y) {$1$};
  }
 }
 \foreach \x in {0,4,8,12}
 {
  \foreach \y in {1,5,9,13,17}
  {
   \node[color=cmm] (\z\x\y) at (\z+0.45*\x,10-0.5*\y) {$\bfo$};
  }
  \foreach \y in {0,2,3,4,6,7,8,10,11,12,14,15,16,18,19,20}
  {
   \node[color=cmm] (\z\x\y) at (\z+0.45*\x,10-0.5*\y) {$1$};
  }
 }
 \foreach \bin [count=\y] in {
 $1$,$0$,$1$,$0$,$0$,$0$,$0$,$0$,$1$,$1$,$0$,$0$,$1$,$0$,$0$,$0$,$0$,$0$,$0$,$1$,$0$
 }
 {
  \node[color=cmm] (\z10\y) at (\z+0.45*10,10-0.5*\y+0.5) {\bin};
 }
}

\foreach \y in {1,9,17}
{
 \node[color=cmm] (6\y) at (0.45*6,10-0.5*\y) {$\bfo$};
}
\foreach \y in {0,2,3,4,5,6,7,8,10,11,12,13,14,15,16,18,19,20}
{
 \node[color=cmm] (6\y) at (0.45*6,10-0.5*\y) {$1$};
}

\foreach \bin [count=\y] in {
$0$,$\bfz$,$1$,$1$,$1$,$1$,$1$,$1$,$1$,$1$,$1$,$1$,$0$,$0$,$0$,$1$,$0$,$\bfz$,$1$,$1$,$1$
}
{
 \node[color=cmm] at (0.45*2,10-0.5*\y+0.5) {\bin};
}

\foreach \bin [count=\y] in {
$0$,$\bfz$,$1$,$1$,$1$,$1$,$1$,$0$,$0$,$0$,$0$,$0$,$1$,$0$,$0$,$1$,$0$,$\bfz$,$1$,$1$,$1$
}
{
 \node[color=cmm] at (8+0.45*2,10-0.5*\y+0.5) {\bin};
}

\foreach \bin [count=\y] in {
$0$,$\bfz$,$1$,$1$,$1$,$1$,$1$,$0$,$0$,$\bfz$,$1$,$1$,$1$,$1$,$1$,$0$,$0$,$\bfz$,$1$,$1$,$1$
}
{
 \node[color=cmm] at (8+0.45*6,10-0.5*\y+0.5) {\bin};
}

\node at (6.7,6.4) {\large $\Longrightarrow$};
\node at (6.7,5.8) {\large $32$};
\node at (6.7,5.2) {\large counter};
\node at (6.7,4.6) {\large steps};
\node at (6.7,4) {\large $\Longrightarrow$};
\end{tikzpicture}
\caption{Partial overflows -- shown as lightgray shading -- trigger the new overflow-coordination tile $\tila$. (For better visibility $\agDtil$-symbols are only drawn on most significant bits.) The pattern on the right appears $2\cdot 32=64$ $\langle\yf,\zf\rangle$-cosets above the one on the left.}\label{f:counterovercoord}
\end{figure}
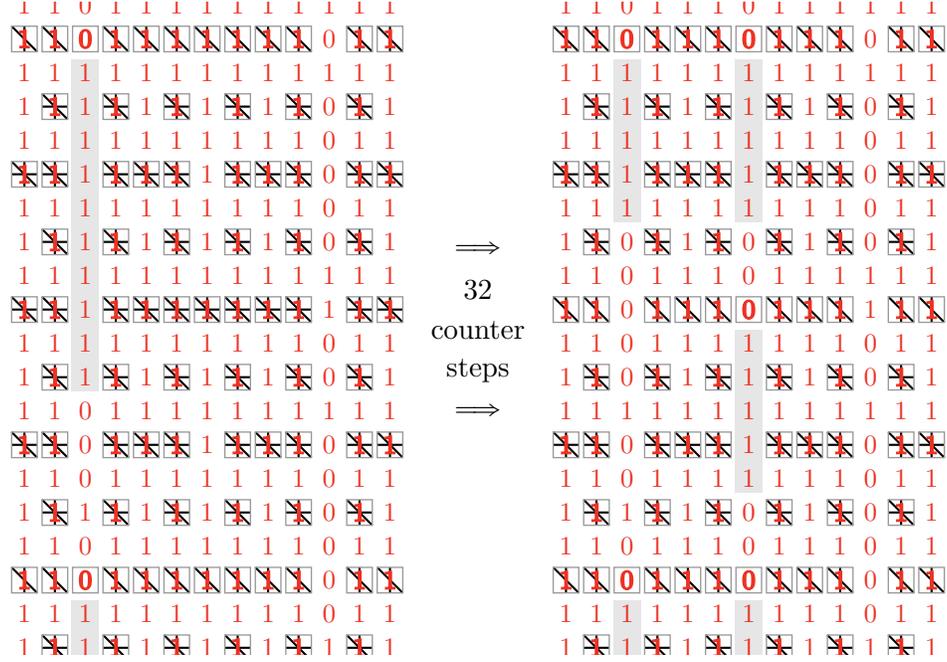

The first consequence of the new rules is the following lemma.

\begin{lem}\label{l:yrow}
In a valid configuration in $\bigl(\omT,\heis\bigr)$ no $\yf$-row can contain both $\agDtil$-tiles $\tilc$ and $\tila$.
\end{lem}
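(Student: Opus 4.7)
The plan is a proof by contradiction: assume that some $\yf$-row of a configuration $\omega\in\omT$ simultaneously carries a $\tilc$ and a $\tila$ as second components, and show that in fact $(\bfo,\tilc)$ must appear at every bold-face position of that row, precluding the $\tila$ occurrence.

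First I would record two preliminary observations. Since both $\tilc$ and $\tila$ are only admissible as second components of $\agCtil$-symbols whose first component equals $\bfo$, any $\yf$-row not contained in an $\agCtil$-filled $\langle\yf,\zf\rangle$-coset trivially satisfies the statement. Moreover, cross propagation along the $\xf$-direction together with the alternating-crosses constraint in the two adjacent Robinson-filled cosets forces the bold-face first components along a $\agCtil$-filled $\yf$-row to occur exactly at an index-$2$ sublattice $\{\yf^{2k}g_0:k\in\zz\}$, while the interleaved sites $\yf^{2k+1}g_0$ carry non-bold binary digits in $\{0,1\}$.

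The crux of the argument is the following local propagation claim: if $\omega(g)=(\bfo,\tilc)$ for some bold-face site $g$ in the row, then $\omega(\yf^{\pm 2}g)=(\bfo,\tilc)$ as well. To establish it, consider the intermediate non-bold site $\yf g$. The trigger ``$\omega(\yf g)=(\bfo,\tila)$'' of rule~\ref{frule2} is ruled out, because $\tila$ requires a bold first component. If rule~\ref{frule2} were nonetheless active at $\yf g$ via a digit overflow $\omega(\yf g)_1=1$, $\omega(\xf^2 \yf g)_1=0$, then applied to the bold-face neighbor $g=\yf^{-1}(\yf g)$ it would force $\omega(g)=(\bfo,\tila)$, contradicting $\omega(g)=(\bfo,\tilc)$. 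Hence no overflow occurs at $\yf g$, so rule~\ref{frule1} is active there; combined with $\omega(\yf^{-1}(\yf g))=\omega(g)=(\bfo,\tilc)$ this yields $\omega(\yf(\yf g))=\omega(\yf^2g)=(\bfo,\tilc)$. The analogous reasoning applied at the intermediate site $\yf^{-1}g$ gives $\omega(\yf^{-2}g)=(\bfo,\tilc)$.

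Iterating the propagation claim in both directions along the sublattice of bold-face positions then forces $\omega(\yf^{2k}g)=(\bfo,\tilc)$ for every $k\in\zz$, which contradicts the assumed occurrence of $(\bfo,\tila)$ in the same row. I anticipate that the one subtle point is identifying exactly which of \ref{frule1}, \ref{frule2} to invoke and where: it is crucial to examine rule~\ref{frule2} at the non-bold intermediate site (ruling out an overflow there) so that rule~\ref{frule1} becomes applicable and propagates $(\bfo,\tilc)$ two sites further along. Once this is in place, the induction and the resulting contradiction are immediate.
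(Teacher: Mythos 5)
Your overall mechanism -- use Rule~\ref{frule2} to spread $\bigl({\bfo},\tila\,\bigr)$ onto neighboring bold-face sites and Rule~\ref{frule1} to carry $\bigl({\bfo},\tilc\,\bigr)$ across a non-overflowing non-bold site -- is exactly the engine of the paper's proof. But your structural premise is wrong, and the propagation argument is built on it. You claim that in an $\agCtil$-filled $\yf$-row the bold-face first components sit exactly on an index-$2$ sublattice with non-bold digits interleaved. That is false in general. Because of the shear in $\heis$ ($\xf\cdot(x,(y,z))=(x+1,(y,z+y))$), a $\yf$-row of a counter coset lies above an \emph{anti-diagonal} of the adjacent Robinson coset, not above one of its rows, and that anti-diagonal picks up not only the level-$0$ alternating crosses but also the central crosses of all higher-level supertiles. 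The correct picture -- which the paper extracts from the full alignment of supertiles (Lemmata~\ref{l:alignment} and \ref{l:alignment2}, Remark~\ref{r:aligned}) -- is that the bold-face digits of such a row form contiguous $\yf$-segments of length $2^i-1$ (for a fixed $i\in\nz\cup\{\infty\}$ depending on the row) separated by single non-bold gaps. Your alternating pattern is the special case $i=1$ only.

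For rows with $i\geq 2$ your argument breaks at two places. First, the intermediate site $\yf g$ can itself be bold-face, so your dismissal of the trigger $\omega(\yf g)=\bigl({\bfo},\tila\,\bigr)$ ``because $\tila$ requires a bold first component'' fails, and Rule~\ref{frule1} (whose hypothesis concerns a non-overflowing digit $0$ or $1$) is not the rule that applies there; inside a bold segment it is the second trigger of Rule~\ref{frule2} that does the work. Second, iterating in steps of $\yf^{\pm2}$ from $g$ only reaches half of the bold sites of each segment, so even if the propagation went through, the conclusion would not by itself exclude a $\tila$ at the skipped bold positions. Both defects are repairable, but only after you first prove the segment-and-gap structure; you cannot get it from the alternating-crosses constraint alone. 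The paper's proof does precisely this: it invokes Remark~\ref{r:aligned} to get the segments, uses Rule~\ref{frule2} to show a single segment cannot mix $\tilc$ and $\tila$, and uses Rules~\ref{frule1}/\ref{frule2} at the width-$1$ gaps to show adjacent segments cannot change type. You need to add the alignment input and redo the case analysis separately at bold intermediate sites and at gap sites.
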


\begin{proof}
Since both tiles $\tilc$ and $\tila$ are always paired with a bold-face $\agB$-digit $\bfo$, the resulting $\agCtil$-symbols only appear on $\langle\yf,\zf\rangle$-cosets where at least some binary counters -- those of width $2$ -- experience a total overflow. Due to the complete alignment of Robinson supertiles (see Remark~\ref{r:aligned}) the bold-face binary digits seen in an arbitrary $\agCtil$-filled $\yf$-row $\langle\yf\rangle\,h$ ($h\in\heis$) group together in contiguous $\yf$-segments of length $2^i-1$ for some fixed $i\in\nz\cup\{\infty\}$. Those $\yf$-segments are inevitably separated by a width $1$ gap containing a non bold-face digit. (Here the infinite length case allows for either two half-row segments separated by a width $1$ gap or for an entire $\yf$-row filled with bold-face digits.) The second part of Rule~\ref{frule2} excludes the coexistence of symbols $\bigl({\bfo},\tilc\,\bigr)$ and $\bigl({\bfo},\tila\,\bigr)$ within a single such $\yf$-segment, whereas Rule~\ref{frule1} forbids adjacent segments to change type.
\end{proof}

\begin{lem}\label{l:overcoord}
Valid configurations in $\bigl(\omT,\heis\bigr)$ obey overflow coordination: for every $i\in\nz$, a total overflow of width $2^i$ counters on some $\langle\yf,\zf\rangle$-coset forces all width $2^j$ counters with $j<i$ to also undergo a total overflow on this particular coset. Moreover, the occurrence of a (partial, respectively, total) overflow of the unique infinite width counter not having a least significant bit forces a simultaneous total overflow of all finite width counters as well as a total overflow of the possibly existing second infinite width counter\footnote{As discussed in Remark~\ref{r:infcount}, an infinite width counter having a least significant bit might run in the left half of the unique exceptional $\langle\xf,\zf\rangle$-coset.} on the same $\langle\yf,\zf\rangle$-coset.
\end{lem}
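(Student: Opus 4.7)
The plan is to use rule~(OC2) to spread the overflow-coordination tile $\tila$ along the $\yf$-direction within the $\langle\yf,\zf\rangle$-coset on which the given total overflow occurs. The crucial observation is that $\tila$ is admissible only at sites where $\tilc$ was required in $\Omega$ (\ie at total-overflow positions), so forcing a bold-face site to carry $(\bfo,\tila)$ is tantamount to forcing a total overflow of the counter with MSB there. Once the total overflow of a single counter of each desired smaller width is established, Corollary~\ref{c:sync} -- whose proof uses only the forward and diagonal segments, both present in $\tilc$ as well as in $\tila$, and which therefore transfers unchanged to $\omT$ -- spreads the overflow to every counter of that width on the coset.

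For the finite case I would argue by iteratively climbing through the widths $2,4,\ldots,2^{i-1}$. Let $h_0$ be an MSB of the overflowing width-$2^i$ counter; by the already-extended Corollary~\ref{c:sync} every bit on the $\zf$-row $\langle\zf\rangle\,h_0$ decrements, so every non-MSB site $h$ there satisfies the first hypothesis $\omega(h)_1=1,\ \omega(\xf^2 h)_1=0$ of~(OC2). The explicit description of the bold-face set from Lemma~\ref{l:preexistence} combined with the full alignment of Remark~\ref{r:aligned} shows that some such non-MSB $h$ has its $\yf$-image equal to a width-$2$ MSB on the neighbouring $\yf$-row; rule~(OC2) then forces $(\bfo,\tila)$ there, Corollary~\ref{c:sync} propagates the resulting overflow to every width-$2$ counter on the coset, and Lemma~\ref{l:yrow} guarantees that every width-$2$ MSB on this $\yf$-row in fact carries $(\bfo,\tila)$ rather than $(\bfo,\tilc)$. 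The second clause of~(OC2) now applies at each such MSB; choosing one whose $\yf$-image is a width-$4$ MSB forces $(\bfo,\tila)$ there. Iterating this two-step pattern -- from a $\tila$-marked width-$2^j$ MSB, apply~(OC2) to a $\yf$-adjacent width-$2^{j+1}$ MSB, then spread by Corollary~\ref{c:sync} and Lemma~\ref{l:yrow} -- climbs through all widths up to $2^{i-1}$.

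For the infinite-width counter without least significant bit, Lemma~\ref{l:infcount} guarantees that a partial or total overflow produces infinitely many sites $h$ on the exceptional $\langle\xf,\zf\rangle$-coset forming a $\zf$-ray or $\zf$-row inside $\langle\yf,\zf\rangle\,\xf^{x+1}$ and satisfying $\omega(h)_1=1,\ \omega(\xf^2 h)_1=0$. Because the exceptional row is flanked by Robinson rows whose associated bold-face $\zf$-lines project to $2\zz$-lattices of width-$2$ MSBs (a standard consequence of the aligned Robinson tilings, as neighbours of any $\yf$-coordinate have $2$-adic valuation $0$), some such $h$ has its $\yf$-image equal to a width-$2$ MSB. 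Applying~(OC2) forces $(\bfo,\tila)$ there, and the finite-case climb then distributes the overflow across all finite widths on the coset. For the second infinite-width counter (present only in case (a) of Remark~\ref{r:infcount}), its MSB sits on the unique cross-propagating $\xf$-column of the exceptional coset, and a last invocation of~(OC2) from any $\yf$-adjacent finite-width counter just shown to overflow forces $(\bfo,\tila)$ there.

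The main obstacle is the geometric verification at each step of the climb, namely that the set of $\tila$-marked width-$2^j$ MSBs on one $\yf$-row intersects the $\yf^{-1}$-shift of the set of width-$2^{j+1}$ MSBs on the next $\yf$-row. This amounts to a $2$-adic arithmetic check on the offsets appearing in Lemma~\ref{l:preexistence}: each set is a coset of $2^j\zz$, respectively $2^{j+1}\zz$, inside a single $\zf$-line, so non-empty intersection reduces to a congruence modulo $2^{j+1}$; the delicate point is tracking the offsets induced by the $\xf$-shear between the cross-coset $\xf^x$ and the counter coset $\xf^{x+1}$ as one climbs through consecutive $\yf$-rows.
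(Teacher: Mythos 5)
Your overall strategy --- use Rule~\ref{frule2} to spread the tile $\tila$ in the $\yf$-direction, starting from a non-bold overflowing digit of the width-$2^i$ counter that is $\yf$-adjacent to a width-$2$ most significant bit, and note that Corollary~\ref{c:sync} transfers to $\omT$ because $\tila$ still carries both line segments --- is the right one, and your first step coincides with the paper's. But the iteration you build on top of it fails. In the aligned configurations of Remark~\ref{r:aligned}, a width-$2^j$ most significant bit lies on a $\zf$-row whose $\yf$-coordinate has $2$-adic valuation $j-1$; for $j\geq 2$ that coordinate is even, so \emph{both} of its $\yf$-neighbours lie on rows of valuation $0$, i.e.\ on width-$2$ rows. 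A width-$2^j$ MSB is therefore never $\yf$-adjacent to a width-$2^{j+1}$ MSB once $j\geq 2$: the intersection you propose to verify by a congruence modulo $2^{j+1}$ is empty, and your climb stalls after the step from width $2$ to width $4$. The paper avoids the climb altogether: the contiguous $\yf$-segment $S=\bigl\{\yf^k\zf^{2^{i-1}}h\,:\,1\leq k\leq 2^{i-1}-1\bigr\}$ attached to the non-bold overflowing digit at $\zf^{2^{i-1}}h$ consists \emph{entirely} of bold-face sites, the site with index $k$ being the MSB of a counter of width $2^{v+1}$ where $v$ is the $2$-adic valuation of $k$; hence all widths $2^j$ with $j<i$ occur in one unbroken run, and Rule~\ref{frule2} pushes $(\bfo,\tila)$ across all of $S$ in a single pass (each newly forced $(\bfo,\tila)$ re-triggers the rule at the next site), with no need for Corollary~\ref{c:sync} or Lemma~\ref{l:yrow} at intermediate stages. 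Your argument can be repaired, but only by replacing the width-by-width climb with exactly this contiguous-segment propagation (every even row is flanked by two width-$2$ rows, so all information must be routed through valuation-$0$ rows, which is what the ruler-sequence structure of $S$ accomplishes).

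The second genuine problem is your treatment of the second infinite-width counter. By Remark~\ref{r:infcount} the infinite counter \emph{with} a least significant bit occupies the left halfplane of the exceptional coset and consequently has \emph{no} most significant bit; the bold digit on the unique cross-propagating $\xf$-column is the MSB of the \emph{other} (right-halfplane) counter, the one whose overflow you are assuming. Forcing one symbol $(\bfo,\tila)$ at that column therefore cannot certify that all infinitely many digits of the left counter decrement simultaneously, which is what its total overflow means. The paper handles this by contradiction: if some digit at $\zf^{-z-2^i+1}h$ of the left counter failed to overflow on layer $\langle\yf,\zf\rangle\,h$, Lemma~\ref{l:infcount} forces it to overflow within the next $2^{2^i}-1$ counter steps, and Rule~\ref{frule2} together with the first part of the lemma would then make the width-$2^i$ counters totally overflow on two layers whose separation is not a multiple of their period $2\cdot 2^{2^i}$. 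An argument of this kind, exploiting the periodicity of the finite-width counters, is needed; a single local application of Rule~\ref{frule2} does not suffice.
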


\begin{proof}
Choose $h\in\heis$ such that $\langle\xf,\zf\rangle\,h$ is filled with width $2^i$ counters with their most significant bits stored in the $\xf$-columns of $\langle\xf,\zf^{2^i}\rangle\,h$ and which periodically experience total overflows on all $\zf$-rows in $\langle\xf^{2^{2^i+1}},\zf\rangle\,h$. The alignment of Robinson supertiles -- given by Lemma~\ref{l:alignment} and Lemma~\ref{l:alignment2} -- forces all sites in the contiguous $\yf$-segment $S=\bigl\{\yf^k\zf^{2^{i-1}} h\,:\,k\in\{1,2,\ldots,2^{i-1}-1\}\bigr\}$ to contain most significant bits of binary counters of smaller width. Thus those positions are filled with $\agCtil$-symbols having a bold-face digit as their first component. The overflowing counter of width $2^i$ sees a binary digit $1$ at $\zf^{2^{i-1}} h$ and a binary digit $0$ at $\xf^2\zf^{2^{i-1}} h$. According to Rule~\ref{frule2} this then triggers $\agCtil$-symbols at all sites in $S$ to be $\bigl({\bfo},\tila\,\bigr)$ which in turn forces all binary counters running in $\langle\xf,\zf\rangle$-cosets $\langle\xf,\zf\rangle\,\yf^k h$ with $1\leq k\leq 2^{i-1}-1$ to experience a simultaneous total overflow on $\zf$-row $\langle\zf\rangle\,\yf^k h$. Note that at least one of those cosets is filled with a width $2^j$ counter for every $j\in\{1,2,\ldots,i-1\}$, finishing the proof for the first part.

For the second statement, let $h\in\heis$ such that $\langle\xf,\zf\rangle\,h$ is the (unique) exceptional coset and suppose that the infinite width counter not having a least significant bit sees an overflowing digit at site $h$. This immediately implies that the digits at all positions in $\{\zf^lh\,:\,l\in\nz\}$ overflow as well, which in turn forces the spread of symbols $\bigl({\bfo},\tila\,\bigr)$ across arbitrarily long $\yf$-segments $S_i=\bigl\{\yf^k\zf^{z_i}h\,:\,k\in\{1,2,\ldots,2^i-1\}\bigr\}$ ($i,z_i\in\nz$ with $z_i\leq 2^i$) causing the simultaneous total overflow of all finite width counters on layer $\langle\yf,\zf\rangle\,h$. Recall from Lemma~\ref{l:infcount} that this situation occurs at most once in the infinite width counter's entire evolution.

Now consider the possibly existing second infinite width counter whose least significant bit on $\zf$-row $\langle\zf\rangle\,h$ we assume to be stored at position $\zf^{-z}h$ for some $z\in\nz$. Suppose this counter does not experience a total overflow on $\zf$-row $\langle\zf\rangle\,h$. Hence there certainly exists $i\in\nz$ such that site $\zf^{-z-2^i+1}h$ does not contain an overflowing digit. However, according to Lemma~\ref{l:infcount}, an overflow on this digit then has to occur in the next $2^{2^i}-1$ counter steps, \ie on some $\zf$-row $\langle\zf\rangle\,\xf^{2l}h$ with $1\leq l\leq 2^{2^i}-1$. By Rule~\ref{frule2} this overflow necessarily triggers overflow-coordination symbols $\bigl({\bfo},\tila\,\bigr)$ along an entire $\yf$-segment extending across neighboring $\langle\xf,\zf\rangle$-cosets $\langle\xf,\zf\rangle\,\yf^kh$ with $1\leq k\leq 2^i-1$. In particular this would cause a total overflow of width $2^i$ counters on layer $\langle\yf,\zf\rangle\,\xf^{2l}h$. By the previous argument we know those counters also undergo a total overflow on layer $\langle\yf,\zf\rangle\,h$. This conflicts with their values repeating exactly with periodicity $2^{2^i}$, concluding our argument.
\end{proof}

The results we have established allow us to now show that $\omT$ factors onto an aperiodic sofic subshift inside $\Omega$, via an almost one-to-one factor map.

\begin{prop}\label{p:blockcode}
There exists a $1$-block code from $\omT$ into $\Omega$ which is at most $2$-to-$1$ and whose range consists of exactly those configurations in $\Omega$ which respect the overflow coordination condition described in Lemma~\ref{l:overcoord}.
\end{prop}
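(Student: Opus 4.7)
The natural candidate is the $1$-block code $\phi\colon\omT\to\ag^\heis$ acting as the identity on $\agR\sqcup\agC$ and sending $(\bfo,\tila)\mapsto(\bfo,\tilc)$, which simply forgets the horizontal segment distinguishing $\tila$ from $\tilc$. The first task is to check that $\phi(\omT)\subseteq\Omega$. Since $\phi$ preserves every first component, and since $\tila$ carries the same diagonal and forward segments as $\tilc$, every local rule defining $\Omega$---Robinson adjacency, the $\agR$ versus $\agC$ coset alternation, cross propagation, segment continuation, the binary addition patterns of Figure~\ref{f:additionpatterns}, and the overflow-marker rule---is inherited under $\phi$. In particular $(\bfo,\tila)$ is admitted in $\omT$ only at sites where $(\bfo,\tilc)$ would already have been admissible in $\Omega$, so substituting $\tilc$ back yields a valid $\Omega$-configuration.

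Next I would show that the image is precisely the subset of configurations in $\Omega$ satisfying overflow coordination. The forward inclusion is immediate from Lemma~\ref{l:overcoord}, since $\phi$ preserves every bit and hence every overflow pattern. For the reverse inclusion, given an overflow-coordinated $\omega'\in\Omega$, I would construct a preimage $\omega\in\omT$ by copying $\omega'$ and promoting selected $(\bfo,\tilc)$ symbols to $(\bfo,\tila)$. The promotion is forced by Rule~\ref{frule2}: whenever $\omega'$ contains $(\bfo,\tilc)$ as the most significant bit of a totally overflowing finite-width counter, every non-MSB digit $h$ of that counter satisfies $\omega(h)_1=1$ and $\omega(\xf^2 h)_1=0$, so the adjacent bold-face MSB must be $(\bfo,\tila)$. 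Promoting every such MSB and leaving the remaining symbols unchanged yields a valid $\omT$-configuration: Rule~\ref{frule1} only constrains $\tilc$ propagation across non-overflow digits (where we keep $\tilc$ in place), and Lemma~\ref{l:yrow} is respected because each affected $\yf$-row is promoted uniformly.

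For the fiber-size bound, let $\omega\in\phi^{-1}(\omega')$. By Lemma~\ref{l:yrow} the $\tilc$ versus $\tila$ choice is decided uniformly per $\yf$-row of each $\agC$-filled $\langle\yf,\zf\rangle$-coset. On any such row that contains an overflowing non-MSB digit of a totally overflowing finite-width counter, Rule~\ref{frule2} forces $\tila$; such rows are uniquely determined by $\omega'$. The remaining rows contain only non-overflowing non-MSB digits, and the bold-face digits they carry are necessarily not of the form $(\bfo,\tilc)$ in $\omega'$ by the overflow-marker rule of $\Omega$, leaving no $\tilc$ versus $\tila$ choice to make. The only situation in which Rules~\ref{frule1}--\ref{frule2} leave room for an alternative lift arises on the at most one exceptional $\langle\xf,\zf\rangle$-coset carrying an infinite-width counter without a least significant bit, whose evolution is non-deterministic by Lemma~\ref{l:infcount}. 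A case analysis on this exceptional coset, using the second statement of Lemma~\ref{l:overcoord}, shows that at most one binary alternative survives, bounding $|\phi^{-1}(\omega')|$ by $2$.

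The main obstacle is this last step, controlling the freedom introduced by the non-deterministic infinite-width counter without a least significant bit. The finite-width part of the fiber analysis follows cleanly from Rules~\ref{frule1}--\ref{frule2} together with Lemma~\ref{l:yrow}, whereas the infinite-width part requires carefully using Lemma~\ref{l:overcoord} to argue that any freedom on the exceptional coset not already fixed by $\omega'$ amounts to at most a single binary choice.
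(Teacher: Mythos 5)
Your map $\phi$ and the overall skeleton of the argument (check the image lands in $\Omega$ and is overflow-coordinated, characterize the range, control the fiber by locating where the $\tilc$ versus $\tila$ choice is forced) are the same as the paper's. The gap is in the forcing mechanism itself. You claim that when a finite-width counter totally overflows, its own overflowing non-MSB digits force its most significant bit to become $\bigl({\bfo},\tila\,\bigr)$ via Rule~\ref{frule2}. But those non-MSB digits live in the \emph{same} $\langle\xf,\zf\rangle$-coset as the MSB, displaced along $\zf$; Rules~\ref{frule1} and~\ref{frule2} act only along the $\yf$-direction, so these digits are not $\yf$-adjacent to the MSB and impose nothing on it. The only non-bold sites $\yf$-adjacent to a maximal bold segment $S$ of length $2^i-1$ are the two gap sites bounding it, and each of those is a digit of a \emph{different}, wider counter of width $2^{i+1}$, which in general undergoes only a \emph{partial} overflow. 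The heart of the paper's proof is the quantitative use of the overflow-coordination hypothesis exactly here: on a layer where the width-$2^i$ counters totally overflow, coordination forces the value of the adjacent width-$2^{i+1}$ counter to be congruent to $2^{2^i}-1$ modulo $2^{2^i}$, so its $2^i$ least significant digits are all $1$ and its $2^i$-th digit -- located precisely at one of the gap sites -- overflows; only then does Rule~\ref{frule2} inject $\tila$ into $S$, whence it propagates through the whole segment. Your assertion that rows with no overflowing non-MSB digit ``carry no $\bigl({\bfo},\tilc\,\bigr)$ by the overflow-marker rule'' is exactly this missing implication, and the overflow-marker rule of $\Omega$ (which only says $\tilc$ marks a totally overflowing MSB) does not yield it. Your case split also omits the decisive case of partially overflowing wider counters. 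Without this step you can establish neither uniqueness of the lift over finite all-$\bigl({\bfo},\tilc\,\bigr)$ segments nor that every overflow-coordinated configuration actually admits a preimage.

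For the bound of $2$ you additionally need the count that an infinite all-$\bigl({\bfo},\tilc\,\bigr)$ bold $\yf$-segment occurs at most once in the \emph{entire} configuration: such a segment forces all finite-width counters to totally overflow on its layer, an event that cannot recur because counters of arbitrarily large width have arbitrarily long periods, and each $\langle\yf,\zf\rangle$-coset contains at most one $\yf$-row with an infinite bold segment. Merely localizing the ambiguity to the exceptional $\langle\xf,\zf\rangle$-coset, as you do, is not enough; without the once-per-configuration count the fiber bound would degrade to a power of $2$ indexed by the number of ambiguous rows rather than to $2$.
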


\begin{proof}
Define a $1$-block code $\phi:\,\omT\rightarrow\Omega$ induced by a local map $\Phi$ acting as the identity on all symbols from $\agR$ as well as $\agC$ while sending the new symbol $\bigl({\bfo},\tila\,\bigr)$ back to its original version $\bigl({\bfo},\tilc\,\bigr)$. By definition of $\omT$, the replacements induced by $\phi$ respect all local rules of $\Omega$. Since $\phi$ does not affect counter values, configurations in $\omT$ (whose counters are overflow coordinated by Lemma~\ref{l:overcoord}) then necessarily map to configurations in $\Omega$ obeying the same overflow coordination constraint. To show that the range of $\phi$ indeed coincides with this particular subshift we construct possible preimages.

Let $\omega\in\Omega$ be a configuration whose binary counters are overflow coordinated. Note that by definition $\Phi^{-1}(\omega(h))$ is uniquely determined for all sites $h\in\heis$ except for those containing a symbol $\bigl({\bfo},\tilc\,\bigr)$. Consider an inclusion-maximal (finite or infinite) $\yf$-segment $S\subseteq\langle\yf\rangle\,h$ ($h\in\heis$) filled with $\agC$-symbols whose first components are bold-face digits. If any of the sites in $S$ sees a symbol different from $\bigl({\bfo},\tilc\,\bigr)$ no site in the preimage of $\omega|_S$ is allowed to contain the symbol $\bigl({\bfo},\tila\,\bigr)$ by Rule~\ref{frule2}. Therefore the entire preimage $\Phi^{-1}(\omega|_S)$ is again uniquely determined by definition of the local map $\Phi$. Next we assume $\omega(s)=\bigl({\bfo},\tilc\,\bigr)$ for all $s\in S$. If $S=\{\yf^kh\,:\,a<k<b\}$ with $a<b\in\zz$ is a finite $\yf$-segment of length $b-a-1$, then one of its two neighboring $\langle\xf,\zf\rangle$-cosets, $\langle\xf,\zf\rangle\,\yf^ah$, respectively $\langle\xf,\zf\rangle\,\yf^bh$, contains binary counters of width $2(b-a)$. Overflow coordination forces the value of those counters seen on $\zf$-row $\langle\zf\rangle\,\yf^ah$, respectively $\langle\zf\rangle\,\yf^bh$, to equal $2^{(b-a)}-1$ modulo $2^{2(b-a)}$. Hence their $(b-a)$th binary digit, one of which is just located at site $\yf^ah$, respectively $\yf^bh$, undergoes an overflow. Rule~\ref{frule2} then forces the preimage of $\omega|_S$ to be entirely filled with symbols $\bigl({\bfo},\tila\,\bigr)$. If $S$ is an infinite $\yf$-segment, the assumption $\omega(s)=\bigl({\bfo},\tilc\,\bigr)$ for all sites $s\in S$ implies that all finite width counters experience a total overflow on layer $\langle\yf,\zf\rangle\,h$. Since this event can occur at most once across the entire configuration $\omega$ and since each $\langle\yf,\zf\rangle$-coset can at most contain one $\yf$-row containing an infinite $\yf$-segment $S$, Lemma~\ref{l:yrow} establishes an upper bound of $2$ for the number of possible preimages of $\omega$. The two choices of filling the entire segment $S$ with $\bigl({\bfo},\tilc\,\bigr)$ or $\bigl({\bfo},\tila\,\bigr)$ in fact appear in exactly two situations; namely whenever $S=\langle\yf\rangle\,h$ is an entire $\yf$-row or if $S$ is a half-row such that the binary digit of the infinite width counter seen at the (unique) site adjacent to $S$ along the $\yf$-direction does not overflow. In all other cases the $\phi$-preimage is unique showing that $\phi$ is invertible on a dense subset of image configurations.
\end{proof}

Finally we prove that there is an almost one-to-one factor map from $\omT$ to a minimal system.

\begin{prop}\label{p:minsofic}
$\omT$ factors onto a strongly aperiodic, minimal (sofic) $\heis$-shift.
\end{prop}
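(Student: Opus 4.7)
The strategy is to show that the image $\phi(\omT)\subseteq\Omega$ of the $1$-block code from Proposition~\ref{p:blockcode} is itself minimal; combined with the automatic strong aperiodicity and soficness of this image, this yields the desired factor. Indeed, $\phi(\omT)$ is sofic by construction as the $1$-block image of the SFT $\omT$, while Proposition~\ref{p:aperiodicity} applied to the ambient $\Omega$ implies that every $\omega\in\phi(\omT)\subseteq\Omega$ has trivial stabilizer, giving strong aperiodicity for free. The entire content of the proposition thus reduces to verifying that $\phi(\omT)$ is minimal.

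To establish minimality I take arbitrary $\omega,\omega'\in\phi(\omT)$ and a finite pattern $p\sqsubseteq\omega$ of shape $F\subsetneq\heis$, and construct a group element $g\in\heis$ for which $\sigma^g(\omega')|_F=p$. Fix $i\in\nz$ large enough that $F$ fits inside a single level-$i$ supertile of a Robinson tiling and dominates the $\xf$-extent of $F$. Then $p$ only involves the Robinson data of at most a few level-$i$ super-patches on its $\agR$-cosets, together with binary counter data of widths at most $2^i$ on its $\agCtil$-cosets.

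The realization of $p$ inside $\omega'$ proceeds in two coupled steps. First, by Remark~\ref{r:minimal} the restrictions of both $\omega$ and $\omega'$ to any $\langle\yf,\zf\rangle$-coset lie in the unique minimal subsystem of the Robinson $\zt$-SFT, so every finite Robinson pattern occurring in $\omega$ appears infinitely often with bounded gap in $\omega'$. Second, by Corollary~\ref{c:sync} combined with the overflow coordination of Lemma~\ref{l:overcoord}, all binary counters of widths $2^1,\ldots,2^i$ in $\omega'$ are tied to a single collective phase parameter which, under iterated application of $\sigma^{\xf^2}$, cyclically traverses all possible values. Hence a suitable power of $\xf^2$ aligns the counter phases of $\sigma^{\xf^{2k}}(\omega')$ with those prescribed by $p$. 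Since the semi-direct product action induced by this shift merely rotates the supertile centers inside each $\agR$-coset (by $(y,z)\mapsto(y,z+yk)$) without destroying their minimal-Robinson structure, a subsequent translation by a carefully chosen element of $\langle\yf,\zf\rangle$ relocates the Robinson part of $p$ to the required position while leaving the already-matched counter phases intact.

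The main obstacle is precisely the interaction between these two matching conditions. A pure $\yf,\zf$-translation leaves counter phases invariant and is the right tool for aligning Robinson patterns, but an $\xf$-translation is needed to adjust counter phases and unavoidably reshuffles the Robinson tiles via the semi-direct product action. Overflow coordination is essential here: without it, counters of different widths would carry independent phase parameters, and matching all of them simultaneously using the single-parameter family of $\xf^2$-shifts would be impossible. The rigid doubly-periodic alignment of level-$i$ supertiles in both the $\yf$- and the $\zf$-direction given by Lemmata~\ref{l:alignment} and \ref{l:alignment2} then reduces the Robinson matching to a bounded-gap search, closing the argument.
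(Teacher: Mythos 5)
There is a genuine gap, and it occurs at the very first step: you factor through the wrong map. You take $\phi$ to be the $1$-block code of Proposition~\ref{p:blockcode}, which retains the full Robinson alphabet $\agR$ (and all counter data except for identifying $\bigl({\bfo},\tila\,\bigr)$ with $\bigl({\bfo},\tilc\,\bigr)$), and you then claim its image inside $\Omega$ is minimal. It is not --- the paper says so explicitly in the closing paragraph of Section~\ref{s:minimal}: the arm tiles pointing in the $\yf$-direction that lie \emph{outside} the union of all finite-level supertiles (\ie on a ``fault'' row of an $\agR$-filled coset) are not determined by the cross structure and are not forced to recur syndetically, so a system remembering this data cannot be minimal. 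This is precisely why the paper's proof of Proposition~\ref{p:minsofic} uses a different and much coarser $1$-block code, onto the alphabet $\{A,C\}\sqcup\agB$, which records only whether a Robinson tile is a cross or an arm and only the first ($\agB$-)component of each counter symbol. Your appeal to Remark~\ref{r:minimal} does not close this hole: that remark gives syndetic recurrence only for patterns contained in finite-level supertiles, and the fault-line arm orientations are exactly the patterns it does not control. (Your soficness and aperiodicity observations are fine, but they are the easy part.)

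A second, independent problem is that your counter-matching step only treats the generic situation. The unique exceptional $\langle\xf,\zf\rangle$-coset carries an infinite-width counter which, by Lemma~\ref{l:infcount}, may lack a least significant bit and then evolves non-deterministically (it either holds a fixed value forever or increments exactly once, at an arbitrary height); its ``phase'' is therefore not cycled through all values by powers of $\sigma^{\xf^2}$, so the single collective phase parameter you invoke does not exist on that coset. The paper devotes the entire second case of its proof to this: it covers the pattern by four complete counter arrays of finite width, relocates to a far-away $\zf$-row $\langle\zf\rangle\,\yf^{2^j}h$ carrying finite-width counters that agree off the exceptional row, and uses the slack in the least significant digits to absorb whatever the non-deterministic counter does. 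Any correct proof must handle this case separately.
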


\begin{proof}
Define a $1$-block code $\phi:\,\omT\rightarrow\bigl(\{A,C\}\sqcup\agB\bigr)^\heis$ induced by a local map $\Phi$ sending cross tiles to the letter $C$, arm tiles to the letter $A$, and projecting all $\agCtil=\agB\times\agDtil$ symbols onto their first component. As an immediate consequence of the propagation of crosses constraint along each $\xf$-column, letters $C$ always alternate with bold-face $\agB$-digits, whereas letters $A$ always alternate with non bold-face $\agB$-digits. Hence in $\phi(\omT)$ the knowledge of all symbols on a single $\agB$-coset is sufficient to recover all symbols on each $\langle\yf,\zf\rangle$-coset filled by the two new letters $A$ and $C$. Applying the same proof as for Proposition~\ref{p:aperiodicity} lets us conclude that $\phi(\omT)$ is still strongly aperiodic.

To prove minimality of the image system, let $\omega\in\phi(\omT)$ be an arbitrary configuration and consider its restriction $p:=\omega|_F$ to an arbitrary but finite support $F=\supp(p)\subsetneq\heis$. Without loss of generality, assume $\omega|_{\langle\yf,\zf\rangle}$ to be filled with $\agB$-symbols and choose $k\in\nz$ such that $F\subseteq\bigcup_{l=0}^{2^k}Q_k\xf^l$, where
\[
Q_k:=\bigl\{(0,(y,z)):\,\abs{y}<2^k\wedge\abs{z}<2^k\bigr\}\subsetneq\heis\ .
\]
We now have to distinguish two cases.

First suppose the base square $Q_k$ intersects only $\zf$-rows containing binary counters of finite width. Hence in all $\zf$-rows $\langle\zf\rangle\,\yf^y$ with $\abs{y}<2^k$, bold-face digits appear with periodicity some (bounded) power of $2$. Therefore the pattern $\omega|_{Q_k}$ extends to a {\em complete counter array}, \ie there is a rectangular patch
\[
R_i:=\bigl\{(0,(y,z)):\,\abs{y}<2^{i-1}\wedge\abs{z}\leq 2^{i-1}\bigr\}\subsetneq\langle\yf,\zf\rangle
\]
of shape $(2^i-1)\times(2^i+1)$ for some $i>k$ and an element $h\in\langle\yf,\zf\rangle$ such that $Q_k\subseteq R_ih$ and $\omega(sh)\in\{{\bfz}, {\bfo}\}$ for all
\[
s\in\bigl\{(0,(y,z)):\,\abs{y}<2^{i-1}\wedge\abs{z}=2^{i-1}\bigr\}\subsetneq R_i\ ,
\]
while $\omega(sh)\in\{0,1\}$ for all $s\in\bigl\{(0,(0,z)):\,\abs{z}<2^{i-1}\bigr\}\subsetneq R_i$. Overflow coordination then implies that the pattern $\omega|_{R_ih}$ is completely characterized by the value of its largest binary counter, namely a natural number between $0$ and $2^{2^i}-1$, stored in the $\zf$-segment $\bigl\{(0,(0,z):\,-2^{i-1}\leq z<2^{i-1}\bigr\}h\subsetneq R_ih$. It follows from Remark~\ref{r:minimal} that
the $\{A,C\}$-pattern $\omega|_{\xf R_ih}$ has to appear in every $\{A,C\}$-layer of every configuration in $\phi(\omT)$. Taking such an occurrence $\omega'|_{(\xf R_ih)g}=\omega|_{\xf R_ih}$ ($g\in\heis$) in an arbitrary configuration $\omega'\in\phi(\omT)$ the pattern $\omega'|_{(R_ih)g}$ is necessarily another complete counter array of shape $(2^i-1)\times(2^i+1)$ uniquely determined by some value between $0$ and $2^{2^i}-1$. Moving up through the stack of $\agB$-patterns $\bigl(\omega'|_{(R_ihg)\xf^{2l}}\bigr)_{l=0}^{2^{2^i}-1}$ this (increasing) value eventually has to match the value given by $\omega|_{R_ih}$. Hence we are sure to find a copy of $\omega|_{R_ih}$ inside $\omega'$. Deterministic evolution of finite width counters then assures to see the entire pattern $p=\omega|_F$ inside $\omega'$.

In the remaining case the base square $Q_k$ intersects a $\zf$-row, say $\langle\zf\rangle\,g$ for some $g\in Q_k$, contained in the unique exceptional $\langle\xf,\zf\rangle$-coset of $\omega$. Since $\langle\zf\rangle\,g$ sees at most one bold-face $\agB$-digit we cannot extend the support of the pattern $\omega|_{Q_k}$ to obtain a complete counter array. Instead we use that $Q_k\setminus\langle\zf\rangle\,g$ can be covered by a rectangular patch $R_{i+1}h$ of shape $(2^{i+1}-1)\times(2^{i+1}+1)$ for some $i>k$ and $h\in\langle\zf\rangle\,g$, such that each restriction $\omega|_{R_i\zf^{\pm 2^{i-1}}\yf^{\pm 2^{i-1}}h}$ to one of $R_{i+1}h$'s four corner rectangles of shape $(2^i-1)\times(2^i+1)$ is a complete counter array, while neither of the two symbols $\omega(\zf^{\pm 2^i}h)\in\agB$ is a bold-face digit. Since $\langle\zf\rangle\,h$ is part of the exceptional $\langle\xf,\zf\rangle$-coset of $\omega$, $\zf$-row $\langle\zf\rangle\,\yf^{2^j}h$ is filled with binary counters of finite width $2^{j+1}$ for each $j\in\nz_0$. Choosing $j>i$ there exists a site $h'\in\{\yf^{2^j}h,\zf^{2^j}\yf^{2^j}h\}$ for which $\omega|_{R_{i+1}h'}$ sees bold-face digits in the same locations as $\omega|_{R_{i+1}h}$. Moreover staying within the same $\langle\yf,\zf\rangle$-coset guarantees $\omega|_{(R_{i+1}\setminus\langle\zf\rangle)h'}=\omega|_{(R_{i+1}\setminus\langle\zf\rangle)h}$ (four coinciding complete counter arrays). Recall that we assumed the support $F$ of our initial pattern $p$ to be contained within the stack $\bigcup_{l=0}^{2^k}Q_k\xf^l$. Hence the choices of $j>i>k$ and $h'$ ensure a counter suffix (\ie segment of least significant digits) of no less than $2^i$ of the $2^{j+1}$ binary digits to lie outside of $(R_{i+1}\cap\langle\zf\rangle)h'$. This yields enough room to accommodate whatever behaviour -- either staying constant or showing a single increase instantly triggering a total overflow of all finite width counters as described by Lemmas~\ref{l:infcount} and~\ref{l:overcoord} -- the non-deterministic infinite width counter running in $\omega|_{\langle\xf,\zf\rangle\,h}$ might show inside $\bigcup_{l=0}^{2^k}Q_k\xf^l$. Climbing up through $\langle\yf,\zf\rangle$-layers by multiples of $2\cdot 2^{2^i}$ keeps all values of counters with widths at most $2^i$ unchanged, so that $\omega|_{(R_{i+1}\setminus\langle\zf\rangle)h'\xf^n}=\omega|_{(R_{i+1}\setminus\langle\zf\rangle)h'}$ for all $n\in 2^{2^i+1}\zz$. Each such successive jump however increases the value of the width $2^{j+1}$ counters stored in $\zf$-rows $\langle\zf\rangle\,h\xf^n$ by $2^{2^i}$ modulo $2^{2^{j+1}}$, thus producing in one periodic cycle all binary values on the counters' $2^{j+1}-2^i$ most significant bits. Choosing the correct value of $n$ we therefore reencounter a shifted version of the base pattern $\omega|_{R_{i+1}h'\xf^n}=\omega|_{R_{i+1}h}$. As this new copy is located entirely in the complement of $\omega$'s exceptional $\langle\xf,\zf\rangle$-coset, we are back in case $1$, concluding our minimality argument just as before.
\end{proof}

We are now ready to conclude the proof of our main theorem.

\begin{proof}[Proof of Theorem~\ref{t:main}]
We point out that the factor map $\phi:\,\omT\twoheadrightarrow\phi(\omT)$ constructed in Proposition~\ref{p:minsofic} is almost one-to-one, producing unique preimages $\phi^{-1}(\omega)$ on the dense subset of configurations $\omega\in\phi(\omT)$ not containing an exceptional coset. 
\end{proof}

To conclude, we finally touch upon the second, maybe less obvious reason for the non-minimality of $\Omega$ and thus also $\omT$. In fact, given any configuration $\omega\in\phi(\omT)$, the propagation of crosses constraint in itself is only strong enough to uniquely reconstruct all Robinson tiles located inside some finite-level supertile. (Similarly counter values determine the eliminated $\agDtil$-tiles.) Nevertheless there is some ambiguity in recovering Robinson symbols, namely arm tiles pointing in the $\yf$-direction, seen outside the union of all finite-level supertiles. To obtain minimality those would have to be restricted further, mimicking the implicit (non-local) rules forced on their behaviour inside finite-level supertiles by the propagation of crosses constraint. Consequently it seems rather difficult to transform $\omT$ itself into a minimal $\heis$-SFT by adding more symbol decorations and further local rules.
Hence the existence of an explicit construction of a strongly aperiodic, minimal $\heis$-SFT remains an open problem.


\begin{thebibliography}{99}
\bibitem{AK13} Nathalie Aubrun and Jarkko Kari, \emph{Tiling problems on Baumslag-Solitar groups}, Proceedings: Machines, Computations and Universality 2013, Electron. Proc. Theor. Comput. Sci. 128 (2013), 35--46.
\bibitem{Barb} Sebasti\'an Barbieri, \emph{A geometric simulation theorem on direct products of finitely generated groups},
Discrete Analysis, 9:25, 2019.
\bibitem{BS16} Sebasti\'an Barbieri and Mathieu Sablik, \emph{A generalization of the simulation theorem for semi-direct products}, Ergodic Theory Dynam. Systems 39 (2019), no. 12, 3185--3206.
\bibitem{B66} Robert Berger, \emph{The undecidability of the Domino Problem}, Mem. Amer. Math. Soc. 66 (1966), 72 pp.
\bibitem{BW92} Jonathan Block and Shmuel Weinberger, \emph{Aperiodic tilings, positive scalar curvature and amenability of spaces}, J. Amer. Math. Soc. 5 (1992), no. 4, 907--918.
\bibitem{CP15} David Carroll and Andrew Penland, \emph{Periodic points on shifts of finite type and commensurability invariants of groups}, New York J. Math. 21 (2015), 811--822. 
\bibitem{CC10} Tullio Ceccherini-Silberstein and Michel Coornaert, \emph{Cellular Automata and Groups}, Springer Monographs in Mathematics, Springer, Berlin, 2010.
\bibitem{C14} David Bruce Cohen, \emph{The large scale geometry of strongly aperiodic subshifts of finite type}, Adv. Math. 308 (2017), 599--626.
\bibitem{CG17} David Bruce Cohen and Chaim Goodman-Strauss, \emph{Strongly aperiodic subshifts on surface groups}, Groups Geom. Dyn. 11 (2017), no. 3, 1041--1059.
\bibitem{CGR17}David Bruce Cohen, Chaim Goodman-Strauss, and Yo'av Rieck, \emph{Strongly aperiodic subshifts of finite type on hyperbolic groups.}, preprint, 2017, arXiv:1706.01387.
\bibitem{C96} Karel Culik II, \emph{An aperiodic set of 13 Wang tiles}, Discrete Math. 160 (1996) 245--251.
\bibitem{CK95} Karel Culik II and Jarkko Kari, \emph{An aperiodic set of Wang cubes}, J. Universal Computer Science 1 (1995), no. 10, 675--686.
\bibitem{dlH} Pierre de la Harpe, \emph{Topics in Geometric Group Theory}, University of Chicago Press, 2000.
\bibitem{EM} Julien Esnay and Etienne Moutot, \emph{Weakly and strongly aperiodic shifts of finite type on Baumslag-Solitar groups}, preprint, 2020, arXiv:2004.02534.
\bibitem{GJS12} Franz G\"ahler, Antoine Julien and Jean Savinien, \emph{Combinatorics and topology of the Robinson tiling}, C. R. Math. Acad. Sci. Paris 350 (2012), no. 11-12, 627--631.
\bibitem{GS05} Chaim Goodman-Strauss, \emph{A strongly aperiodic set of tiles in the hyperbolic plane}, Invent. Math. 159 (2005), no. 1, 119--132.
\bibitem{GS10} Chaim Goodman-Strauss, \emph{A hierarchical strongly aperiodic set of tiles in the hyperbolic plane}, Theoret. Comput. Sci. 411 (2010), no. 7-9, 1085--1093.
\bibitem{H09} Michael Hochman, \emph{On the dynamics and recursive properties of multidimensional symbolic systems}, Invent. Math. 176 (2009), no. 1, 131--167.
\bibitem{JR17} Emmanuel Jeandel and Michael Rao, \emph{An aperiodic set of 11 Wang tiles}, Adv. Comb. 2021, Paper No. 1, 37 pp.
\bibitem{JK97} Aimee Johnson and Kathleen Madden, \emph{Putting the pieces together: understanding Robinson's nonperiodic tilings}, College Math. J. 28 (1997), no. 3, 172--181. 
\bibitem{K96} Jarkko Kari, \emph{A small aperiodic set of Wang tiles}, Discrete Math. 160 (1996), no. 1-3, 259--264.
\bibitem{DM95} Douglas Lind and Brian Marcus, \emph{An Introduction to Symbolic Dynamics and Coding}, Cambridge University Press, Cambridge, NY, 1995.
\bibitem{M08} Maurice Margenstern, \emph{The domino problem of the hyperbolic plane is undecidable}, Theoret. Comput. Sci. 407 (2008), no. 1-3, 29--84.
\bibitem{M97} Shahar Mozes, \emph{Aperiodic tilings}, Invent. Math. 128 (1997), no. 3, 603--611.
\bibitem{R99} Charles Radin, \emph{Miles of Tiles}, Student Mathematical Library 1, American Mathematical Society, Providence, RI, 1999.
\bibitem{RR04} E. Arthur Robinson Jr., \emph{Symbolic dynamics and tilings of $\rd$}, Symbolic dynamics and its applications, volume 60 of Proc. Sympos. Appl. Math., pages 81--119, Amer. Math. Soc., Providence, RI, 2004.
\bibitem{R71} Rafael M. Robinson, \emph{Undecidability and nonperiodicity for tilings of the plane}, Invent. Math. 12 (1971), 177--209.
\bibitem{SSU}Ay\c se \c Sahin, Michael Schraudner, and Ilie Ugarcovici, \emph{A strongly aperiodic shift of finite type for the discrete Heisenberg group.} Conference presentation, \url{http://www.dim.uchile.cl/~mschraudner/SyDyGr/Talks/sahin_cmmdec2014.pdf}.
\bibitem{W61} Hao Wang, \emph{Proving theorems by pattern recognition II}, Bell System Technical Journal 40 (1961), no. 1, 1--41.
\end{thebibliography}
\end{document}